\numberwithin{equation}{section}
\title{Pointwise-in-time error bounds for semilinear and quasilinear fractional subdiffusion equations \\on graded meshes\thanks{%
This research was conducted with the financial support of Science Foundation Ireland under Grant number 18/CRT/6049.}}
\author{Se{\'a}n Kelly\thanks{Department of Mathematics and Statistics, University of Limerick, Ireland.
(\email{sean.f.kelly@ul.ie}).}
\and Natalia Kopteva\thanks{Department of Mathematics and Statistics, University of Limerick, Ireland.
(\email{natalia.kopteva@ul.ie}).}}
\definecolor{blue}{rgb}{0,0,0.6}
\definecolor{ForestGreen}{rgb}{0,0.6,0}
\newcommand{\pt}{\partial}
\newcommand{\R}{\mathbb{R}}
\renewcommand{\sim}{\simeq}
\newcommand{\LL}{{\mathcal L}}
\newcommand{\QQ}{{\mathcal Q}}
\newcommand{\B}{{\mathcal B}}
\newcommand {\U} {{\mathcal U}}
\newcommand {\RR} {{\mathcal R}}
\newcommand {\beq} {\begin{equation}}
\newcommand {\eeq} {\end{equation}}
\definecolor{pass}{rgb}{0,0,0.8}%{0.6,0,0.2}     %===provisionally ok
\definecolor{pass1}{rgb}{0,0,0.5}
\definecolor{pass2}{rgb}{50, 50, 50}
\begin{document}
  \maketitle
  %\author
  %\headers{Pointwise-in-time error bounds for (semi/quasi/linear) fractional subdiffusion equations on graded meshes}{Seán Kelly and Natalia Kopteva}

    \begin{abstract}
      Time-fractional semilinear and quasilinear parabolic equations with a Caputo time derivative of order $\alpha\in(0,1)$ are
      considered, solutions of which  exhibit a singular behaviour at an initial time of type $t^\sigma$
      for any fixed $\sigma \in (0,1) \cup (1,2)$.
      The L1 scheme in time is combined with a general class of discretizations for the semilinear term.
For such discretizations, we
obtain sharp pointwise-in-time error bounds on graded temporal meshes with arbitrary degree of grading.
Both semi-discretizations in time and full discretizations using finite differences and finite elements in space are addressed.
The theoretical findings are illustrated by numerical experiments.
    \end{abstract}

    \begin{keywords}
    time-fractional, subdiffusion, semilinear, quasilinear, L1 method, graded meshes
    \end{keywords}

    \begin{MSCcodes}
    65M15, 	65M06, 65M60
    \end{MSCcodes}

\section{Introduction}\label{sec:intro}
{%\color{pass}
Fractional-in-time semilinear and quasilinear parabolic equations will be considered.
The  semilinear problem is of the form
\beq\label{problem}
\begin{array}{l}
\pt_t^{\alpha}u+\LL u+f(u)=0\quad\mbox{for}\;\;(x,t)\in\Omega\times(0,T],\\[0.2cm]
u(x,t)=0\quad\mbox{for}\;\;(x,t)\in\pt\Omega\times(0,T],\qquad
u(x,0)=u_0(x)\quad\mbox{for}\;\;x\in\Omega,
\end{array}
\eeq
{\color{blue}{(with  a more general $f$ addressed in Remarks \ref{rem_ftu}} and \ref{rem_fxtu}).}
%\newline
This problem is posed in a bounded Lipschitz domain  $\Omega\subset\R^d$ (where $d\in\{1,2,3\}$), and involves
a linear second-order elliptic  spatial operator $\LL$ defined on temporal mesh $\{t_j\}_{j\ge0}$ by
\beq\label{LL_def}
\LL u := \sum_{k=1}^d \Bigl\{-\pt_{x_k}\!(a_k(x,t)\,\pt_{x_k}\!u) + b_k(x,t)\, \pt_{x_k}\!u \Bigr\}+c(x,t)\, u,
\eeq
with sufficiently smooth coefficients $\{a_k\}$, $\{b_k\}$ and $c$ in $\bar\Omega$, for which we assume that $a_k>0$ in $\bar\Omega$,
and also both $c\ge 0$ and $c-\frac12\sum_{k=1}^d\pt_{x_k}\!b_k\ge 0$.
We shall also consider a quasilinear equation of the form $\pt_t^{\alpha}u+\QQ u+f(x,t,u)=0$,
where
$\QQ u := -\sum_{k=1}^d \pt_{x_k}\!\bigl\{a_k(x,t,u)\,\pt_{x_k}\!u + b_k(x,t,u) \bigr\}$ (see \S\ref{sec_quasi} for details).
The operator $\pt_t^\alpha$, for some $\alpha\in(0,1)$, is
the Caputo fractional derivative in time defined \cite{Diet10} by
\begin{equation}\label{CaputoEquiv}%\color{pass}
\pt_t^{\alpha} u(\cdot,t) :=  \frac1{\Gamma(1-\alpha)} \int_{0}^t(t-s)^{-\alpha}\, \pt_s u(\cdot, s)\, ds
    \qquad\text{for }\ 0<t \le T,
\end{equation}
where $\Gamma(\cdot)$ is the Gamma function, and $\pt_s$ denotes the partial derivative in $s$.

  It will be assumed throughout the paper that the exact solution satisfies
\beq\label{ass_u_pde}
\|\partial_t^l u (\cdot, t)\|_{L_p(\Omega)}\lesssim 1+t^{\sigma-l}\qquad \mbox{for}\;\; l=0,1,2,\;\;t\in(0,T],
\eeq
with some $\sigma \in (0,1) \cup (1,2)$, for $p=2$ or $p=\infty$.
While a more typical assumption corresponds to the particular case $\sigma:=\alpha$, other values of $\sigma$
have been considered, including $\sigma=2\alpha$ for $p=\infty$ \cite{Gracia_2alpha}, and  $\sigma<\alpha$ for $p=2$ due to non-smooth inital data \cite{JLZ19}.
The singularities with $\sigma>\alpha$ are also related to delay equations, such as
a linear version of \eqref{problem} with an additional delay term $u(\cdot, t-\theta)$ ($\theta>0$)
considered in  \cite{delay_cmam},
with a typical solution satisfying
$|\partial_t^l u (\cdot, t)|\lesssim 1+(t-\theta j)^{\sigma_j-l}$ on each interval $\theta(j,j+1]$ ($j\ge0$), where $\sigma_j:=\alpha(j+1)$.

The above problems will be discretized in time using the L1 discrete fractional-derivative operator, defined by
\beq\label{delta_def}
\delta_t^{\alpha} U^m :=  \frac1{\Gamma(1-\alpha)} \sum_{j=1}^m \delta_t U^j\!\int_{t_{j-1}}^{t_j}\!\!(t_m-s)^{-\alpha}\, ds,
\qquad
\delta_t U^j:=\frac{U^j-U^{j-1}}{t_j-t_{j-1}}.\vspace{-0.1cm}%
\eeq

A wide class of discretizations in time for the semilinear term will be considered, which, in the context of
 the semilinear problem~\eqref{problem}, leads
to an L1-type semidiscretization of in time, $\forall\,m=1,\ldots,M$:
%$\delta_t^\alpha$ from \eqref{delta_t_def}:
\beq\label{semi_semidiscr_method}
\delta_t^\alpha U^m +\LL U^m+ F(U^m,U^{m-1})=0\;\;\mbox{in}\;\Omega,\;U^m=0\;\;\mbox{on}\;\pt\Omega,\;\; \;U^0=u_0.
\eeq
Here $f(u)$ %at $t=t_m$
is discretized using $F(\cdot,\cdot)$, which satisfies general conditions A1 and A2 below,
formulated under the assumption that the exact solution $u$ of \eqref{problem} is bounded for $0\le t\le T$, with its range denoted by $\RR_u\subseteq \R$.
\smallskip

\begin{itemize}[leftmargin=0.7cm]
\item[{\bf A1}] (Consistency)
For some fixed positive constant $L$ and fixed $q\ge 1$, let
%
%
%{\color{pass}\cite[first part of the proof in Lemma 1]{DK16}: + \cite[Lemma 16]{BS73}
%==> given $u_0\in L_\infty$, we get $U^j\in H_0^1(\Omega)\cap L_\infty(\Omega)$}
$$%\beq\label{F_ass_a}
%F(v,v)=f(v)\quad\Leftarrow \quad |F(v,w)-f(v)|\le L |v-w|,\qquad
|F(v,w)-f(v)|\le L |v-w|^q\qquad \forall\,v,\,w\in\RR_u\subseteq \R.
$$%\eeq

\item[{\bf A2}] (One-sided Lipschitz condition)
Let $F(\cdot,\cdot)$ be continuous in $\R^2$ and,
for some constant $\lambda_0\ge0$, let the function
$F(v,w)+\lambda_0 v$ be non-decreasing in $v$ for any fixed $w\in\R$, or, equivalently,
\begin{align*}%\label{}
%$$%\beq\label{F_ass_b_new}\color{magenta}
F(v+\nu,w)-F(v, w)&\ge -\lambda_0\, \nu \qquad \forall\,v,\,w\in \R,\;\nu\ge 0.
\\[-3pt]
\intertext{Furthermore, let $F(\cdot,\cdot)$ satisfy the Lipschitz condition in the second argument with some constant $\lambda_1\ge 0$:\vspace{-3pt}}
%$$%\beq\label{F_ass_b_new}\color{magenta}
\bigl|F(v, w+\omega)-F(v, w)\bigr|&\le \lambda_1|\omega| \qquad \forall\,v,\,w\in \RR_u\,,\;\omega\in \R.
\end{align*}
(%If $F$ is differentiable, then
Note that
$\pt_v F(v,w)\ge-\lambda_0$ and $|\pt_w F(v,w)|\le \lambda_1$ are sufficient for A2.)
\end{itemize}
\medskip

Example 1 (Implicit scheme). One standard choice is $F(U^m,U^{m-1})=f(U^m)$, which corresponds to
$F(v,w)=f(v)$ satisfying A1 and A2 with
$L=\lambda_1=0$ and $\lambda_0=\max\{0,\,\sup_{\R}(- f')\}$.
(For  $\sigma=\alpha$, this scheme was considered in \cite{kopteva_semilin}.)
\smallskip

Example 2 (Convex splitting \cite{Du2020}).
For an Allen-Cahn model with $f(u)=u^3-u$, one choice  is
$F(U^m,U^{m-1})=(U^m)^3-U^{m-1}$, which corresponds to $F(v,w):=v^3-w$ satisfying A1 and A2 with
 $L=1$, $q=1$, $\lambda_0=0$, and $\lambda_1=1$.
\smallskip

Example 3 (First-order IMEX \cite{jin18, maskari19, Rasheed24}). A first-order IMEX scheme is given by \eqref{simplest_b} with $F(U^m,U^{m-1})=f(U^{m-1})$, which corresponds to
$F(v,w):=f(w)$ satisfying A1 and A2 with
$q=1$, $L=\lambda_1=\sup_{\R} |f'|$,
  and $\lambda_0=0$ .
\smallskip

Example 4 (Newton-iteration-type second-order IMEX \cite{HLiao1, DLi1}).
A higher-order IMEX scheme  is given by \eqref{simplest_b}
with  $F(v,w):=f(w)+[v-w]\,f'(w)$, which satisfies A1 and A2 with
 $q=2$, $L=\frac12\sup_{\R} |f''|$,
  and $\lambda_0=\max\{0,\,\sup_{\R}(- f')\}$,
  while %for $F(v, w+\omega)-F(v, w)$ one has
  $$
  F(v, w+\omega)-F(v, w)=\bigl\{f(w+\omega)- f(w)\bigr\}-\omega\,f'(w+\omega)+[v-w]\,\bigl\{f'(w+\omega)-f'(w)\bigr\}
  $$
  yields
  $\lambda_1=2\sup_{\R}|f'|+2L\,{\rm diam}(\RR_u)$.
\smallskip

Other examples include a first-order stabilized IMEX scheme \cite{Ji20}, which
%
%Example 5 (Stabilized IMEX). Another first-order scheme
corresponds to  $F(U^m,U^{m-1}) = f(U^{m-1}) + S\,(U^m -U^{m-1})$
%or $F(v,w):=f(w)+S\,(v-w)$,
with a sufficiently large stabilization parameter $S$,
and a second-order explicit scheme with
$f(U^m)$ approximated by $f(U^{m-1}+\tau_m\delta_t U^{m-1})$ \cite{Plociniczak24}.
As the latter involves $U^{m-2}$, our analysis does not applies directly, but it easily extends to this case as well.
\smallskip

%{\it Novelty.}
Our results %in this paper
are novel and different from the earlier literature as follows.
\smallskip
\begin{itemize}[leftmargin=0.5cm]
  \item
  Graded meshes for the singularity of type \eqref{ass_u_pde} with $\sigma=\alpha$ have been addressed in numerous works; see, e.g.,
  \cite{sorg17,liao18,NK_MC_L1,Kopteva_Meng}
    (see also \cite{Gracia_2alpha}  for $\sigma=2\alpha$, discussed in Remark~\ref{rem_earlier_sigma}).
  By contrast, the error analysis for a general $\sigma$ on graded meshes has been addressed only on uniform temporal meshes
  \cite{DLi1}. %(using discrete Gr\"{o}nwall inequalities).
  We close this theoretical gap by giving sharp pointwise-in-time error bounds in the $L_2(\Omega)$ and $L_\infty(\Omega)$ norms
  for any $\sigma \in (0,1) \cup (1,2)$ and any value of the grading parameter $r\ge 1$.

  Note also that to deal with this more general case, we had to ensure that
 our key stability result, Theorem~\ref{theo_main_stab_semi}, applies to a more general range of $r\ge 1$
  (compared to a similar result in \cite{Kopteva_Meng}, as discussed in Remark~\ref{rem_extension}).
  While this may seem a minor generalization, it required structural changes in the earlier proof (to which we devote an entire \S\ref{ssec_extension}),
  and may be of independent interest.

\smallskip

  \item
 While, as discussed above, various discretizations of the semilinear term
  %(and, thus, various particular cases of $F(\cdot,\cdot)$)
  have been considered in the literature,
  we present general and sharp pointwise-in-time error analysis for a very general class of such discretizations
 and explicitly show how the errors depend on the solution singularity (i.e. on $\sigma$ in \eqref{ass_u_pde}), the mesh grading parameter $r$,
  and properties of $F(\cdot,\cdot)$
  (in particular, on $q$ in A1).
\smallskip

  \item
  While there is a huge literature on linear and semi-linear subdiffusion equations, there are very few papers
\cite{Jin2024,Lopez2023,Plociniczak2023}
on the numerical analysis of quasilinear subdiffusion equations. So far only uniform temporal meshes have been addressed
for self-adjoint quasilinear elliptic operators (i.e. with $b_k:=0$).
%with the diffusion coefficients $a_k:=a(u)$ in \cite{Jin2024,Plociniczak2023} and $a_k:=a(x,t,u)$ $\forall\,k$ (using our definition of $\QQ$). %; see also \eqref{LL_quasi}).
%
Since the coefficients in \eqref{problem} are functions of $x$ and $t$,
our error analysis framework very naturally extends to non-self-adjoint quasilinear operators.
Thus we extend our error bounds to quasilinear subdiffusion equations on graded temporal meshes,
which, to the best of our knowledge, have not been addressed in the literature in this context.
\end{itemize}
\smallskip

  The paper is organised as follows. A key stability result \eqref{main_stab_new}
for the discrete fractional-derivative operator is obtained in \S\ref{sec_stab}
 and applied %in the error analysis
 for the case without spatial derivatives in \S\ref{sec_error_ode}.
 The resulting pointwise-in-time error bounds
  are extended to semilinear and quasilinear fractional-in-time parabollic equations, respectively, in
  \S\ref{sec_semi} and \S\ref{sec_quasi}.
  Our theoretical findings are illustrated by numerical experiments in \S\ref{sec_Num}.

\smallskip

{\it Notation.}
We write
 $a\sim b$ when $a \lesssim b$ and $a \gtrsim b$, and
$a \lesssim b$ when $a \le Cb$ with a generic constant $C$ depending on $\Omega$, $T$, $u_0$,
%$C_f$ (and possibly $\bar C_f$),
$f$, and $\alpha$,
but not %on other essential quantities.
%
 %In particular,
% $C$  does not depend
 on the total numbers of degrees of freedom in space or time.
  Also, %for %$\mathcal{D}\subset\bar\Omega $,
  %$1 \le p \le \infty$, %and $k \ge 0$,
  we shall use the standard %norms
% $\|\cdot\|_{L_p(\mathcal{D})}$
%  and $\|\cdot\|_{W_p^k(\mathcal{D})}$ respectively
  %in the
  spaces $L_p(\Omega)$ for $1 \le p \le \infty$,
  as well as %the
  %related Sobolev spaces $W_p^k(\Omega)$,
  %while
  %$H^1(\Omega)=W_2^1(\Omega)$ and
  the Sobolev space
  $H^1_0(\Omega)$ %is the %standard
  %space
  of functions %in $H^1(\Omega)$
  vanishing on $\pt\Omega$
  and having first-order derivatives in $L_2(\Omega)$.

\section{Stability in time}\label{sec_stab}

Throughout the paper, we shall assume that the temporal mesh is quasi-graded, with a grading parameter $r\ge 1$,
in the sense that
\beq\label{t_grid_gen}%\color{pass}
\tau := t_1\simeq M^{-r},\qquad
%t_j \sim \tau j^r,\quad %t_j = \tau j^r,\quad
\tau_j:=t_j-t_{j-1}%\simeq M^{-1}\,t_j^{1-1/r}
\lesssim\tau^{1/r}t_j^{1-1/r}%{\color{red}\simeq t_j/j}
\qquad %t_{j+1}\lesssim  t_{j}
\forall\,j=1,\ldots,M.
\eeq
One notable example of \eqref{t_grid_gen} is a standard graded grid $\{t_j=T(j/M)^r\}_{j=0}^M$.

\begin{remark}\label{rem_tau_j_1}
It is useful to note that
\eqref{t_grid_gen} immediately implies that, with a sufficiently small constant $c=c(r)\in(0,\frac12)$, one has
 $t_{j-1}\ge c t_j$ $\forall\,j\ge2$ .
Indeed, choosing $c$ sufficiently small, if $\tau/t_j\le c$, from \eqref{t_grid_gen} one gets $\tau_j/t_j\lesssim c^{1/r}\le \frac12$
or $t_{j-1}\ge\frac 12 t_j\ge c t_j$. Otherwise, i.e. if $\tau/t_j> c$, from $t_{j-1}\ge\tau$ one again gets $t_{j-1}\ge  c t_j$.
\end{remark}

The main result of this section, which will be crucial for all error bounds that we obtain below, is the following stability property of the discrete fractional operator.

\begin{theorem}[stability in time]\label{theo_main_stab_semi}
Given $\gamma\in\R$ and $\lambda_0,\,\lambda_1\ge0$,
%if $\gamma>\alpha-1$,
let  the temporal mesh %, for some $1\le r\le (3-\alpha)/\alpha$,
satisfy $\lambda_0\tau_j^{\alpha}< \{\Gamma(2-\alpha)\}^{-1}$ $\forall\,j\ge1$. Additionally, suppose that either one has
\eqref{t_grid_gen} with some
$r\ge 1$, %$1\le r\le \color{red}\frac{2-\alpha}{\alpha}$,
or $\gamma\le \alpha-1$, and $\max\{\tau_j\}$ is sufficiently small.
Then %, given $\color{red}\gamma\ge\alpha -1$,
%for $\{U^j\}_{j=0}^M$
%with $U^0=0$
% one has
 %$\forall\, j\ge 1$%and $|\delta_t^\alpha V^j|\lesssim \tau^\gamma t_j^{-\gamma-1}$ for $j=1,\ldots,M$,
%then $|V^j|\lesssim t_j^{\alpha-1}$ for $j=1,\ldots, M$.
\beq\label{main_stab_new}
\left.\!\!\!\!\begin{array}{c}
(\delta_t^\alpha-\lambda_0) U^j-\lambda_1 U^{j-1}\lesssim (\tau/ t_j)^{\gamma+1}
\\[0.0cm]
\forall j\ge1,\;\;\; U^0=0
\end{array}\hspace{-0.2cm}\right\}
 \Rightarrow%\left.\!\!\!\!
 \begin{array}{l}
U^j\lesssim
\U^j(\tau;\gamma)\\
\quad{}:=
\ell_\gamma\tau t_j^{\alpha-1}(\tau/t_j)^{\min\{0,\,\gamma\}},%\\[0.2cm]%\;\;
%\forall j\ge 1,
\end{array}
%\hspace{-0.3cm}
\eeq
where $\ell_\gamma=\ell_\gamma(t_j):=1+\ln(t_j/\tau)$ for $\gamma=0$ and $\ell_\gamma:=1$ otherwise.
\end{theorem}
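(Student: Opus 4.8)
The plan is to combine a discrete comparison (inverse-monotonicity) principle for the L1 operator with an explicit barrier function matching the claimed bound $\U^j(\tau;\gamma)$. Writing $\MM V^j := (\delta_t^\alpha-\lambda_0)V^j - \lambda_1 V^{j-1}$, observe that $\MM$ is linear, so it suffices to exhibit a mesh function $B^j$ with $B^0=0$, $B^j\simeq \U^j(\tau;\gamma)$, and $\MM B^j\gtrsim (\tau/t_j)^{\gamma+1}$ for all $j\ge1$. Then, with $C$ denoting the implicit constant in the hypothesis, $\MM(c B^j - U^j)\ge 0$ for a suitable $c\gtrsim C$, while $(cB^j - U^j)|_{j=0}=0$; the comparison principle forces $cB^j - U^j\ge 0$, which is precisely the assertion.

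First I would establish the comparison principle. Using Abel summation, I would rewrite $\delta_t^\alpha V^m=\sum_{j=0}^m d_{m,j}V^j$, where the diagonal coefficient is $d_{m,m}=\tau_m^{-\alpha}/\Gamma(2-\alpha)>0$ and, by the positivity and monotonicity in $j$ of the L1 weights $w_{m,j}:=\kappa_{m,j}/\tau_j$ (with $\kappa_{m,j}=\Gamma(1-\alpha)^{-1}\int_{t_{j-1}}^{t_j}(t_m-s)^{-\alpha}\,ds$), the off-diagonal coefficients $d_{m,j}$ for $j<m$ are nonpositive. The hypothesis $\lambda_0\tau_j^\alpha<\{\Gamma(2-\alpha)\}^{-1}$ is exactly what guarantees $d_{m,m}-\lambda_0>0$, so $\MM$ has an M-matrix structure. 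Inverse monotonicity ($\MM V^j\ge0$ and $V^0\ge0$ imply $V^j\ge0$) then follows by induction on $m$: solving the $m$-th relation for $V^m$ expresses $(d_{m,m}-\lambda_0)V^m$ as the sum of $\MM V^m\ge0$, a nonnegative combination of $V^0,\dots,V^{m-1}$, and $\lambda_1 V^{m-1}\ge0$.

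Second, I would construct the barrier. The natural candidate is $B^j\simeq \ell_\gamma\,\tau\,t_j^{\alpha-1}(\tau/t_j)^{\min\{0,\gamma\}}$, i.e.\ a power of $t_j$ with the logarithmic factor $\ell_\gamma$ inserted when $\gamma=0$; a scaling heuristic (treating $\delta_t^\alpha$ as mapping $t^\beta$ to $t^{\beta-\alpha}$ in the regular regime) already shows this reproduces $(\tau/t_j)^{\gamma+1}$ for $\gamma<0$ and the saturated rate $(\tau/t_j)$ for $\gamma>0$. The analytic core is a set of sharp two-sided estimates for $\delta_t^\alpha$ applied to functions of the form $t^\beta$ and $t^\beta\ln t$ on a quasi-graded mesh; here the mesh assumption \eqref{t_grid_gen} and Remark~\ref{rem_tau_j_1} (which supplies $t_{j-1}\ge c\,t_j$) are used to compare neighbouring nodes and to bound $\tau_j$ in the history sum. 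The three regimes $\gamma>0$, $\gamma=0$, $\gamma<0$ must be treated separately, the borderline case $\gamma=0$ being the most delicate because of the need to track the logarithmic factor through the discrete convolution.

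I expect the main obstacle to be the sharp lower bound $\delta_t^\alpha B^j\gtrsim(\tau/t_j)^{\gamma+1}$ uniformly for all $r\ge1$. The difficulty is that the discrete convolution defining $\delta_t^\alpha B^m$ must be split into an ``early'' part ($t_j\ll t_m$) and a ``late'' part ($t_j\simeq t_m$), and controlling each contribution from below, without cancellation near $j=1$ where $\tau\simeq M^{-r}$, is what breaks down in the restricted-$r$ argument of \cite{Kopteva_Meng}. This is where the structural change enters: a more careful estimation of the tail of the convolution, valid for arbitrarily large grading, is required. Under the alternative hypothesis ($\gamma\le\alpha-1$ with $\max_j\tau_j$ sufficiently small), the barrier may be taken monotone and the grading structure is not needed, so this case is handled by a simpler, more direct estimate.
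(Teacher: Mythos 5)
Your first step, the comparison principle for $\MM V^j:=(\delta_t^\alpha-\lambda_0)V^j-\lambda_1V^{j-1}$, is correct and coincides with the paper's Lemma~\ref{lem_comparison}, and ``comparison principle plus barrier'' is indeed the paper's overall strategy. The genuine gap is in your second step: no single power-type (or power-times-log) barrier $B^j\simeq\U^j(\tau;\gamma)$ can satisfy $\MM B^j\gtrsim(\tau/t_j)^{\gamma+1}$ for all $j$, so the object your plan rests on does not exist. To see this, rewrite $\U^j=\ell_\gamma\, t_j^\alpha(\tau/t_j)^{1+\min\{0,\gamma\}}$ (equivalent to the form in \eqref{main_stab_new}). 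By your own scaling heuristic --- which is correct for such profiles --- one has $\delta_t^\alpha B^j\lesssim (\tau/t_j)^{\min\{0,\gamma\}+1}$, whereas the zero-order terms cost $\lambda_0B^j+\lambda_1B^{j-1}\simeq\lambda\,\ell_\gamma\, t_j^\alpha(\tau/t_j)^{1+\min\{0,\gamma\}}$ with $\lambda:=\lambda_0+\lambda_1$ (here $B^{j-1}\simeq B^j$ by Remark~\ref{rem_tau_j_1}). The cost therefore exceeds the fractional-derivative output by the factor $\simeq\lambda\,\ell_\gamma\, t_j^\alpha$, so $\MM B^j<0$ at every node with $t_j\gtrsim\lambda^{-1/\alpha}$, i.e.\ on a fixed portion of $[0,T]$ whenever $\lambda T^\alpha$ is not small --- and the theorem claims the result for arbitrary fixed $\lambda_0,\lambda_1\ge0$ and $T$. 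Crucially, both the output $\delta_t^\alpha B^j$ and the cost $\lambda_0B^j+\lambda_1B^{j-1}$ scale linearly in the amplitude of $B$, so no adjustment of the constants hidden in ``$B^j\simeq\U^j$'' can repair the sign; a workable barrier must encode Mittag-Leffler-type compensation, which a pure power/log profile cannot.

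This obstruction is precisely what dictates the structure of the paper's proof, for which your proposal has no counterpart. The paper first reduces to the case $\lambda_0=\lambda_1=0$ (Lemma~\ref{lem_stability1}), and then, instead of one barrier, builds the composite barrier \eqref{W_def}, ${\mathcal W}^j={\B}_\gamma^j+2\lambda C_\gamma\bar\ell_\gamma\tau^\alpha\bigl\{{\B}_{\gamma^*}^j+C_*\tau^{1+\gamma^*}B^j\bigr\}$: here ${\B}_{\gamma^*}^j$ is a second power-type barrier at the shifted exponent $\gamma^*=\min\{0,\gamma\}-\alpha$, which absorbs the zero-order cost of ${\B}_\gamma^j$ but only where $1-2\lambda C_{\gamma^*}t_j^\alpha\ge0$; the indispensable third ingredient is the switched-on barrier $B^j$ of Lemma~\ref{lem_axu_barrier} --- a bounded function vanishing for $t_j\le t_m$ with $(\delta_t^\alpha-\lambda)B^j\ge1$ for $t_j\ge t_m+c_0$ --- which supplies the missing positivity exactly at the late nodes where all power-type barriers fail. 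Separately, your description of the remaining difficulty (the $\lambda_0=\lambda_1=0$ case for arbitrary $r\ge1$) is also off target: the known argument breaks not because of a crude tail estimate in an early/late splitting, but because the truncation error of a single kinked profile \eqref{B_L1_def} is not dominated by $\frac12\pt_t^\alpha B$ once $r>\frac{2-\alpha}{\alpha}$; the paper's fix in \S\ref{ssec_extension} is a geometric superposition $W_N=\sum_n c_nB(\cdot\,;t_{p_n})$ of such profiles whose \emph{ensemble} truncation error is dominated by $\frac12\pt_t^\alpha W_N$ --- again a structural change rather than a sharper estimate for one profile.
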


The above theorem is established (albeit under a certain restriction on $r$, as explained in Remark~\ref{rem_extension}) in \cite[Theorem~2.1]{Kopteva_Meng} for a significantly simpler case $\lambda_0=\lambda_1=0$,
and subsequently extended to the case $\lambda_0> 0=\lambda_1$ and $\gamma\neq 0$ in \cite[Theorem~3.1]{kopteva_semilin}.
In our proof below, for an extension  to the case $\gamma= 0$, we shall use an idea from \cite[Theorem~2.3]{NK_AML_L2nonl}.

\begin{remark}[dropping the restriction $r\le\frac{2-\alpha}{\alpha}$]\label{rem_extension}
Both \cite{Kopteva_Meng} and \cite{kopteva_semilin}
addressed the case $\sigma=\alpha$, for which it was sufficient to assume in \cite[Theorem~2.1]{Kopteva_Meng} and \cite[Theorem~3.1]{kopteva_semilin}
that either $1\le r\le \frac{2-\alpha}{\alpha}$ or $\gamma\le \alpha-1$.
However, an inspection of our error analysis in section~\ref{sec_error_ode} below shows that for the case $\sigma<\alpha$, we need to extend \cite[Theorem~2.1]{Kopteva_Meng}
to arbitrarily large values of $r\ge 1$. This requires non-trivial modifications in the proof of \cite[Theorem~2.1]{Kopteva_Meng}, so we devote an entire section~\ref{ssec_extension}
to this generalization.
Hence, in the proof of Theorem~\ref{theo_main_stab_semi}, we shall be able to assume that \eqref{main_stab_new} is valid for $\lambda_0=\lambda_1=0$.
\end{remark}

The proof of Theorem~\ref{theo_main_stab_semi} hinges on the following comparison principle.

\begin{lemma}[comparison principle for the operator in \eqref{main_stab_new}]\label{lem_comparison}
Given $\lambda_0,\,\lambda_1\ge0$, let the temporal mesh satisfy $\lambda_0\tau_j^{\alpha}<\{\Gamma(2-\alpha)\}^{-1}$ $\forall\,j\ge1$.
Then $V^0\le 0$ and  $(\delta_t^\alpha-\lambda_0)V^m-\lambda_1 V^{m-1}\le 0$ $\forall\,m\ge1$
imply $V^m\le 0$ $\forall\,m\ge0$.
\end{lemma}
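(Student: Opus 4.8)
The plan is to prove this by a discrete induction on $m$, using the explicit structure of the L1 operator $\delta_t^\alpha$ in (2.x), whose defining formula isolates the most recent increment $\delta_t V^m$ with a strictly positive weight. First I would write out $\delta_t^\alpha V^m$ by splitting off the $j=m$ term in the sum, so that
\[
\delta_t^\alpha V^m = w_m\,(V^m-V^{m-1}) + \frac{1}{\Gamma(1-\alpha)}\sum_{j=1}^{m-1}\delta_t V^j\!\int_{t_{j-1}}^{t_j}(t_m-s)^{-\alpha}\,ds,
\]
where $w_m := \frac{1}{\Gamma(1-\alpha)\,\tau_m}\int_{t_{m-1}}^{t_m}(t_m-s)^{-\alpha}\,ds = \frac{\tau_m^{-\alpha}}{\Gamma(2-\alpha)}$. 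The base case $V^0\le 0$ is given. For the inductive step I would assume $V^i\le 0$ for all $0\le i\le m-1$ and aim to deduce $V^m\le 0$ from the hypothesis $(\delta_t^\alpha-\lambda_0)V^m-\lambda_1 V^{m-1}\le 0$.

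The key step is to solve the inequality for $V^m$. Collecting the $V^m$ terms gives a coefficient $w_m-\lambda_0 = \frac{\tau_m^{-\alpha}}{\Gamma(2-\alpha)}-\lambda_0$, which is strictly positive precisely because of the assumed mesh restriction $\lambda_0\tau_m^\alpha<\{\Gamma(2-\alpha)\}^{-1}$. Rearranging the inequality $(w_m-\lambda_0)V^m \le (\text{terms in }V^{m-1},\dots,V^0)$, I would show that the right-hand side is $\le 0$. This follows because every historical increment $\delta_t V^j$ for $j\le m-1$ enters multiplied by a nonnegative quantity, and the induction hypothesis controls the signs: the standard point is that the discrete memory terms can be reorganised (by Abel summation, i.e. summation by parts) so that $\sum_{j=1}^{m}$-type expressions become a combination of the $V^i$ themselves with nonnegative coefficients. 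Concretely, writing the L1 operator in its equivalent form $\delta_t^\alpha V^m=\sum_{i=0}^{m-1} a_{m,i}(V^{m-i}-V^{m-i-1})$ and summing by parts yields $\delta_t^\alpha V^m = \kappa_{m,m}V^m - \sum_{i=0}^{m-1}\kappa_{m,i}V^i$ with all $\kappa_{m,i}\ge 0$ and $\kappa_{m,m}=w_m$; the monotonicity of the kernel weights guarantees nonnegativity. Then $(w_m-\lambda_0)V^m \le \sum_{i=0}^{m-1}\kappa_{m,i}V^i+\lambda_1 V^{m-1}\le 0$ by the induction hypothesis, and since $w_m-\lambda_0>0$ we conclude $V^m\le 0$.

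I expect the main obstacle to be the sign bookkeeping in the summation-by-parts step: one must verify that the rearranged coefficients $\kappa_{m,i}$ are genuinely nonnegative, which relies on the fact that the L1 weights $a_{m,i}$ are positive and \emph{monotone} in $i$ (a consequence of $(t_m-s)^{-\alpha}$ being increasing in $s$ and the kernel being convex). This monotonicity is what makes the historical contribution push $V^m$ in the correct direction; the $\lambda_1 V^{m-1}$ term is harmless since $\lambda_1\ge 0$ and $V^{m-1}\le 0$ by induction. Once this structural positivity is in hand, the argument is a clean induction, and the role of the hypothesis $\lambda_0\tau_j^\alpha<\{\Gamma(2-\alpha)\}^{-1}$ is exactly to keep the leading coefficient $w_m-\lambda_0$ strictly positive so that the final division preserves the inequality.
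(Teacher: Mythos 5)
Your proposal is correct and follows essentially the same route as the paper: induction on $m$, rewriting $\delta_t^\alpha$ in the form $\kappa_{m,m}V^m-\sum_{j<m}\kappa_{m,j}V^j$ with nonnegative history coefficients (which the paper simply cites as a known calculation, e.g.\ from \cite{NK_MC_L1}, while you derive it by summation by parts), and using the mesh condition to keep $\kappa_{m,m}-\lambda_0>0$ before dividing. The only quibble is that the positivity of the rearranged coefficients needs just the monotonicity of $(t_m-s)^{-\alpha}$ in $s$ (so the interval averages $a_{m,j}$ increase in $j$), not any convexity of the kernel.
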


\begin{proof}
The proof is by induction.
First,
a straightforward calculation (see, e.g., \cite{NK_MC_L1}) shows that
%$\delta_t^\alpha$ in
\eqref{delta_def} on an arbitrary temporal mesh $\{t_j\}$ can be represented as
\beq\label{delta_def_kappa}
\delta^\alpha_t V^m=\kappa_{m,m} V^m-\sum_{j=0}^{m-1}\kappa_{m,j}V^j,
\quad%\mbox{where}\;\;
\kappa_{m,m}=\frac{\tau_m^{-\alpha}}{\Gamma(2-\alpha)}, \;\; \kappa_{m,j}>0\;\;\forall\,m\ge j.
\eeq
Now, assuming that $V^j\le 0$ $\forall\,j<m$, one can rewrite $(\delta_t^\alpha-\lambda_0)V^m-\lambda_1 V^{m-1}\le 0$
as
$$
(\kappa_{m,m}-\lambda_0)V^m\le \lambda_1 V^{m-1}+\sum_{j<m}\kappa_{m,j}V^j,
$$
which immediately yields the desired $V^m\ge 0$.
\end{proof}
%\hfill$\square$

\begin{lemma}[{\cite[Lemma~3.2]{kopteva_semilin}}]\label{lem_axu_barrier}
Given any $\lambda\ge 0$, any fixed constant $0<c_0<\frac12\{\lambda \Gamma(2-\alpha)\}^{-1/\alpha}$,
and any fixed mesh point $0\le t_m\le T$, if  $\tau_j\le\frac12 c_0$ $\forall\,j\ge 1$, then
there exists a non-decreasing discrete function
$\{B^j\}_{j=0}^M$ such that $0\le B^j\lesssim 1$ $\forall\,j\ge1$ and
%$V^0=0$
\beq\label{aux_B}
B^j=0\;\;\forall\,j\le m,\quad\;\;
%0\le B^j\lesssim 1\;\mbox{and}\;
(\delta_t^\alpha- \lambda)\, B^j \ge Q^j:=\!\left\{\begin{array}{ll}
0\,&\mbox{for~}t_j< t_m+c_0\\
1\,&\mbox{for~}t_j\ge t_m+c_0
\end{array}\right.
\;\forall\,j\ge1.
\eeq
\end{lemma}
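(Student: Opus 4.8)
The plan is to take $\{B^j\}$ to be the nodal restriction of an explicit continuous barrier that solves the relaxed problem exactly. For $t\le t_m$ set $B(t):=0$, and for $t>t_m$ let $B$ solve $\pt_t^\alpha B-\lambda B=C_0$ (with the Caputo derivative based at $t_m$) subject to $B(t_m)=0$, where $C_0\gtrsim1$ is a constant to be fixed at the end. This gives the shifted, scaled Mittag--Leffler barrier $B(t)=C_0\lambda^{-1}\bigl(E_\alpha(\lambda(t-t_m)_+^\alpha)-1\bigr)$ (and simply $B(t)=C_0\,(t-t_m)_+^\alpha/\Gamma(1+\alpha)$ in the degenerate case $\lambda=0$). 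Such a $B$ is non-negative, non-decreasing, vanishes on $[0,t_m]$, and is bounded on $[0,T]$ by $C_0\lambda^{-1}(E_\alpha(\lambda T^\alpha)-1)\lesssim1$; hence $B^j:=B(t_j)$ automatically satisfies the monotonicity, the bound $0\le B^j\lesssim1$, and $B^j=0$ for $j\le m$ in \eqref{aux_B}. Everything thus reduces to the discrete differential inequality.

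For that inequality I would write $\delta_t^\alpha B^j=\pt_t^\alpha B(t_j)+\rho^j$, where $\rho^j:=\delta_t^\alpha B^j-\pt_t^\alpha B(t_j)$ is the L1 consistency error for this (mildly singular at $t_m$) barrier. The continuous identity then gives $(\delta_t^\alpha-\lambda)B^j=C_0+\rho^j$, and, as $B$ is proportional to $C_0$, so is $\rho^j$; thus the relative error $\rho^j/C_0$ is independent of $C_0$. For mesh points well beyond the singularity, i.e. $t_j\ge t_m+c_0$ (which, since $\tau_j\le\frac12c_0$, means at least two steps past $t_m$), a standard L1 truncation estimate on quasi-graded meshes \eqref{t_grid_gen} bounds $\rho^j/C_0$ from below by a fixed $-\theta$ with $\theta<1$; choosing $C_0:=(1-\theta)^{-1}\gtrsim1$ then yields $(\delta_t^\alpha-\lambda)B^j\ge C_0(1-\theta)=1=Q^j$, while keeping $B^j\lesssim1$ with a constant that is now pinned down.

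The delicate part---and the main obstacle---is the transition region $t_m<t_j<t_m+c_0$, where $Q^j=0$ but the consistency error is worst because it sits adjacent to the $(t-t_m)^{\alpha-1}$-type singularity of $\pt_t B$; here one cannot afford to lose to $\rho^j$. I would instead argue directly from the representation \eqref{delta_def_kappa} rather than from the continuous identity. Since $B$ is non-decreasing, every increment $\delta_t B^i$ in \eqref{delta_def} is non-negative and is weighted by a positive L1 weight $w_{j,i}$, so the task is to show $\delta_t^\alpha B^j\ge\lambda B^j$, i.e. $\sum_{i=m+1}^{j}\delta_t B^i\,(w_{j,i}-\lambda\tau_i)\ge0$ (this is immediate when $\lambda=0$, since then $\delta_t^\alpha B^j\ge0$). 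The diagonal term is safe because $\kappa_{j,j}=\tau_j^{-\alpha}/\Gamma(2-\alpha)>\lambda$, which is exactly guaranteed by $\tau_j\le\frac12c_0<\frac14\{\lambda\Gamma(2-\alpha)\}^{-1/\alpha}$; the remaining work is a careful summation---comparing $\sum_i\delta_t B^i\,w_{j,i}$ with $\lambda\sum_i\delta_t B^i\,\tau_i=\lambda B^j$---showing that, because the whole region is confined to $(t-t_m)\le c_0$ with $c_0$ small relative to $\{\lambda\Gamma(2-\alpha)\}^{-1/\alpha}$, the positive diagonal and near-diagonal contributions dominate. This is precisely where the smallness hypotheses on $c_0$ and $\tau_j$ are consumed, and establishing this lower bound uniformly in the mesh is the crux of the proof.
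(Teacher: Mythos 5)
The paper never actually proves this lemma: it is imported verbatim from \cite[Lemma~3.2]{kopteva_semilin}, so your proposal can only be measured against what such a proof must deliver. Measured that way, it breaks down --- and precisely at the step you treat as the easy one. Your claim that for $t_j\ge t_m+c_0$ the L1 consistency error $\rho^j:=\delta_t^\alpha B^j-\pt_t^\alpha B(t_j)$ of the Mittag--Leffler barrier obeys $\rho^j\ge-\theta C_0$ with a fixed $\theta<1$ is false in general. By the identity \eqref{trun_int} (applied to $B$ in place of $u$), $\Gamma(1-\alpha)\,\rho^j=\alpha\int_0^{t_j}(t_j-s)^{-\alpha-1}(B-B^I)(s)\,ds$, where $B^I$ is the piecewise-linear interpolant; hence the L1 operator \emph{underestimates} the Caputo derivative wherever $B$ is convex. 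Your barrier $B\propto E_\alpha(\lambda(t-t_m)_+^\alpha)-1$ is convex and grows exponentially at rate $\mu:=\lambda^{1/\alpha}$ once $t-t_m\gtrsim\mu^{-1}$. The lemma gives no smallness of the steps beyond $\tau_j\le\frac12 c_0$, and $c_0$ may be comparable to $\{\lambda\Gamma(2-\alpha)\}^{-1/\alpha}\simeq\mu^{-1}$, so $\mu\tau_j$ can be a fixed positive constant. At that resolution the interpolation defect $B^I-B$ of an exponential is a fixed positive fraction of the local function value, and the contribution of the last mesh interval alone gives $-\rho^j\gtrsim c\,\lambda B^j=c\,C_0\bigl(E_\alpha(\lambda(t_j-t_m)^\alpha)-1\bigr)$ with $c>0$ depending only on $\alpha$ and $\mu\tau_j$. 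This grows exponentially in $\mu(t_j-t_m)$ while your forcing is only the constant $C_0$, so $(\delta_t^\alpha-\lambda)B^j=C_0+\rho^j\ge1$ fails at all sufficiently late mesh points whenever $\lambda(T-t_m)^\alpha$ is large; and, as you yourself observe, $\rho^j$ scales linearly in $C_0$, so no choice of $C_0$ can repair this. Two further problems: your truncation estimate invokes \eqref{t_grid_gen}, which the lemma does not assume (and the paper applies the lemma, in the second branch of Theorem~\ref{theo_main_stab_semi}, on meshes only known to have small $\max\tau_j$); and the region $t_m<t_j<t_m+c_0$ that you single out as the crux is in fact the benign one, since there $B$ is still in its concave, $(t-t_m)^\alpha$-like regime, where the sign identity above makes the consistency error favorable.

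The structural moral is that no barrier with sustained exponential growth can survive L1 discretization on meshes whose steps are merely of order $c_0$: the discrete operator cannot resolve that growth. A workable construction, in the spirit of the cited \cite[Lemma~3.2]{kopteva_semilin}, must instead be assembled from bounded, concave, power-type pieces (shifted ramps or capped $(t-a)_+^\alpha$ profiles), whose L1 consistency error has the favorable sign on every interval by \eqref{trun_int}, with the $-\lambda B^j$ term absorbed stage by stage using $c_0<\frac12\{\lambda\Gamma(2-\alpha)\}^{-1/\alpha}$ over the finitely many (at most $\simeq T/c_0\lesssim1$) intervals of length $\simeq c_0$, the bound $B^j\lesssim1$ being allowed to grow by a bounded factor per stage.
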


\begin{proof}[Proof of Theorem~\ref{theo_main_stab_semi}]
%
%
%\begin{proof}
For the particular case
$\lambda_0=\lambda_1=0$
(whether one has \eqref{t_grid_gen} with $r\ge 1$, %$1\le r\le \frac{2-\alpha}{\alpha}$,
or $\gamma\le \alpha-1$), the desired stability result \eqref{main_stab_new} is proved in
Lemma~\ref{lem_stability1}
(which is a generalization of \cite[Theorem~2.1]{Kopteva_Meng}), and implies that
\beq\label{main_stab_B}
\left.\begin{array}{c}
\delta_t^\alpha {\B}_\gamma^j= (\tau/ t_j)^{1+\gamma}
\\[0.2cm]
\forall j\ge1,\;\;\; {\B}_\gamma^0=0
\end{array}\right\}
\Rightarrow
0\le{\B}_\gamma^j\lesssim {\mathcal U}_\gamma^j=
%\tau t_j^{\alpha-1}(\tau/t_j)^{\min\{0,\,\gamma\}}
\ell_\gamma(t_j)\,\tau^\alpha
(\tau/t_j)^{1+\min\{0,\,\gamma\}-\alpha}
\;\;\;\forall j\ge 1,
\eeq
where we use a slightly more compact notation ${\mathcal U}_\gamma^j$ for ${\mathcal U}^j(\tau\,;\gamma)$ from \eqref{main_stab_new}, and also rewrite this quantity in
an different equivalent form.
To extend \eqref{main_stab_B} to our more general case, set
$$
\lambda:=\lambda_0+\lambda_1\ge 0,\qquad
\gamma^*:=\min\{0,\,\gamma\}-\alpha<0,\qquad
\bar\ell_\gamma:=\max_{j\le M}\ell_\gamma(t_j)=\ell_\gamma(T).
$$
Recall Remark~\ref{rem_tau_j_1} for $j\ge2$ and $\B_\gamma^0=0$ for $j=1$,
which yield ${\B}_\gamma^{j-1}\lesssim {\B}_\gamma^{j}$, and hence, with some constant $C_\gamma>0$,
$$
\lambda_0 {\B}_\gamma^j+\lambda_1 {\B}_\gamma^{j-1}\le \lambda C_\gamma\, {\mathcal U}_\gamma^j=\lambda C_\gamma\, \bar\ell_\gamma\,\tau^\alpha
(\tau/t_j)^{1+\gamma^*}.
$$
Hence,
\begin{align}\label{B_gamma}
(\delta_t^\alpha-\lambda_0){\B}_\gamma^j-\lambda_1 {\B}_\gamma^{j-1}&\ge (\tau/ t_j)^{1+\gamma}-\lambda C_\gamma\, \bar\ell_\gamma \tau^\alpha (\tau/t_j)^{1+\gamma^*},
\\
\intertext{and, similarly, only noting that $\bar\ell_{\gamma^*}=1$ (as $\gamma^*\neq 0$)
and
$\min\{0,\,\gamma^*\}-\alpha=\gamma^*-\alpha$, one gets}
(\delta_t^\alpha-\lambda_0){\B}_{\gamma^*}^j-\lambda_1 {\B}_{\gamma^*}^{j-1}&\ge (\tau/ t_j)^{1+\gamma^*}-\lambda C_{\gamma^*} \tau^\alpha (\tau/t_j)^{1+\gamma^*-\alpha}
\label{B_star}\\{}\notag
&=\textstyle
\frac12 (\tau/ t_j)^{1+\gamma^*}+\frac12 (\tau/ t_j)^{1+\gamma^*}\bigl[1-2 \lambda C_{\gamma^*} t_j^\alpha \bigr].
\end{align}

Next, using the above ${\B}_{\gamma}^j$ and ${\B}_{\gamma^*}^j$, as well as $B^j$ from Lemma~\ref{lem_axu_barrier} (assuming that the conditions of the latter lemma are satisfied), set
\beq\label{W_def}
{\mathcal W}^j:={\B}_\gamma^j+2\lambda C_\gamma\, \bar\ell_\gamma \tau^\alpha \Bigl\{ {\B}_{\gamma^*}^j+C_*\tau^{1+\gamma^*} B^j\Bigr\}
\eeq
with a sufficiently large constant $C_*$.
Now, combining the above observations \eqref{B_gamma} and \eqref{B_star} with \eqref{aux_B}, one gets
\begin{align}\label{delta_W_aux}
(\delta_t^\alpha-\lambda_0){\mathcal W}^j&-\lambda_1{\mathcal W}^{j-1}\ge  (\tau/ t_j)^{1+\gamma}
\\{}\notag
&+
\lambda C_\gamma\, \bar\ell_\gamma \tau^\alpha \Bigl\{(\tau/ t_j)^{1+\gamma^*}
\bigl[1-2 \lambda C_{\gamma^*} t_j^\alpha\bigr] + 2 C_*\tau^{1+\gamma^*} Q^j\Bigr\}.
\end{align}
Importantly, in the statement of Lemma~\ref{lem_axu_barrier}, the discrete function $B^j$ is non-decreasing in $j$, so
 $\lambda_0 B^j+\lambda_1 B^{j-1}\ge \lambda\, B^j$.
 Hence, the term $(\delta_t^\alpha- \lambda)\, B^j$ in \eqref{aux_B} can be replaced by $(\delta_t^\alpha- \lambda_0)\, B^j-\lambda_1 B^{j-1}$, which we used in the above
 evaluation.

Note that \eqref{delta_W_aux} immediately yields
\beq\label{delta_W}
(\delta_t^\alpha-\lambda_0){\mathcal W}^j-\lambda_1{\mathcal W}^{j-1}\ge
(\tau/ t_j)^{1+\gamma}
\eeq
whenever $[1-2 \lambda C_{\gamma^*} t_j^\alpha]\ge 0$.
Otherwise, i.e. if $[1-2 \lambda C_{\gamma^*} t_j^\alpha]< 0$, we shall
obtain \eqref{delta_W}
by ensuring that then $Q^j=1$ and choosing $C_*$ sufficiently large
(so that $2 C_*\tau^{1+\gamma^*} Q^j=2 C_*\tau^{1+\gamma^*}$ dominates
$(\tau/ t_j)^{1+\gamma^*}
\,2 \lambda C_{\gamma^*} t_j^\alpha$).
To ensure $Q^j=1$ whenever $[1-2 \lambda C_{\gamma^*} t_j^\alpha]< 0$,
in Lemma~\ref{lem_axu_barrier} choose a sufficiently small constant $c_0$ satisfying $2 \lambda C_{\gamma^*} (2c_0)^\alpha\le 1$, and the maximal $t_m\le c_0$ (so whenever $[1-2 \lambda C_{\gamma^*} t_j^\alpha]<0$, one gets $t_j\ge t_m+c_0$).
Note that Lemma~\ref{lem_axu_barrier} requires
$\tau_j\le \frac12 c_0$ $\forall\,j\ge 1$.
(If this is not the case for some $j$, then \eqref{t_grid_gen} implies $\tau^{1/r}\gtrsim t_j^{-(1-1/r)}\gtrsim 1$, so $M\simeq 1$ $\forall\,j\ge 1$, so the desired result
becomes straightforward.)

Thus, we have obtained \eqref{delta_W} $\forall\, j\ge1$.
Hence, in view of the comparison principle of Lemma~\ref{lem_comparison},
for $U^j$ satisfying the assumptions in \eqref{main_stab_new} one concludes
that $U^j\lesssim {\mathcal W}^j$.
Finally note that \eqref{W_def} implies
$$
U^j\lesssim{\mathcal W}^j\lesssim
\ell_\gamma(t_j)\,\tau^\alpha
(\tau/t_j)^{1+\gamma^*}
+
\bar\ell_\gamma \tau^\alpha \Bigl\{\tau^\alpha
(\tau/t_j)^{1+\gamma^*-\alpha}+\tau^{1+\gamma^*} B^j\Bigr\},
$$
while the desired assertion is equivalent to $U^j\lesssim \ell_\gamma(t_j)\,\tau^\alpha
(\tau/t_j)^{1+\gamma^*}$.
To show that the latter is indeed true, first, note that one has $B^j\neq 0$ for $t_j\ge t_m\gtrsim 1$, so the term $\tau^{1+\gamma^*} B^j$ enjoys the same bound as $\tau^\alpha
(\tau/t_j)^{1+\gamma^*-\alpha}$.
Combining this observation with
$$
\bar\ell_\gamma\,t_j^\alpha\lesssim \bigl[1+\ln(t_j/\tau)+\ln(T/t_j)\bigr]t_j^\alpha\lesssim 1+\ln(t_j/\tau)=\ell_\gamma(t_j)
$$
yields the desired bound $U^j\lesssim{\mathcal W}^j\lesssim \ell_\gamma(t_j)\,\tau^\alpha
(\tau/t_j)^{1+\gamma^*}$.
\end{proof}

%\newpage

\subsection{Generalization of \cite[Theorem~2.1(i)]{Kopteva_Meng} for any $r\ge 1$ (dropping the restriction $r\le\frac{2-\alpha}{\alpha}$)}\label{ssec_extension}
Our task is to establish \eqref{main_stab_new} for the case $\lambda_0=\lambda_1=0$.
This result was proved in \cite[Theorem~2.1(i)]{Kopteva_Meng} under the assumption $1\le r\le \frac{2-\alpha}\alpha$.
However, for the purpose in this paper (see Remark~\ref{rem_extension}) we need to relax this assumption.
This is addressed by the following result \eqref{B_def}, valid for any $r\ge 1$, which, while not being identical  with  \eqref{main_stab_new} for the case $\lambda_0=\lambda_1=0$,
is equivalent to the latter (in view of $\delta_t^\alpha$ being a linear operator).

\begin{remark}\label{rem_nontrivial}
Note that the proof of \cite[Theorem~2.1(i)]{Kopteva_Meng} hinges on \cite[Lemma~2.3]{Kopteva_Meng}, while an inspection of the proof of the latter shows that
it cannot be extended to the more general $r\ge 1$.
The key issue lies in that the truncation error $|(\pt_t^\alpha-\delta_t^\alpha) B|$ for a single barrier function $B$ of type \eqref{B_L1_def}
is not dominated by $\frac12\pt_t^\alpha B$  unless $r\le \frac{2-\alpha}\alpha$.
To rectify this, we shall now present a new proof, which builds on the ideas used in \cite[Lemmas~2.3, 2.5, and 2.6]{Kopteva_Meng}
in a non-trivial way,
the key  being that for appropriately chosen linear combinations $W_N$ of functions of type \eqref{B_L1_def} (see~\eqref{W_N_def}),
we instead estimate the ensemble truncation error $(\pt_t^\alpha-\delta_t^\alpha) W_N$, which, as we show below, is dominated by $\frac12\pt_t^\alpha W_N$.
\end{remark}

\begin{lemma}\label{lem_stability1}
Given $\gamma\in\R$,
let the temporal mesh satisfy
 \eqref{t_grid_gen} %$\{t_j=T(j/M)^r\}_{j=0}^M$
with some $r\ge 1$.
Then\vspace{-0.2cm}
\beq\label{B_def}
\left.\!\!\!\!\begin{array}{c}
\delta_t^\alpha V^m\le t_m^{-1}(\tau/ t_m)^{\gamma}
\\[0.2cm]
\forall m\ge1,\;\;\; V^0=0
\end{array}\hspace{-0cm}\right\}
\; \Rightarrow\;%\left.\!\!\!\!\begin{array}{c}
V^m\lesssim
\tau^{-1}\U^m(\tau;\gamma)
=
\ell_\gamma t_m^{\alpha-1}(\tau/t_m)^{\min\{0,\,\gamma\}},%\\[0.2cm]%\;\;
%\forall j\ge 1,
%\end{array}\right.
%\hspace{-0.3cm}
\eeq
where $\ell_\gamma=\ell_\gamma(t_m)=1+\ln(t_m/\tau)$ for $\gamma=0$ and $\ell_\gamma:=1$ otherwise
(and the constants contained in $\lesssim$ depend only on $\alpha$, $r$, and $\gamma$).
\end{lemma}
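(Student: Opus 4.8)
The plan is to prove \eqref{B_def} by constructing a discrete barrier function and invoking the comparison principle of Lemma~\ref{lem_comparison} specialized to $\lambda_0=\lambda_1=0$. Specifically, I would exhibit a nonnegative discrete function $\{\mathcal{B}^m\}_{m\ge0}$ with $\mathcal{B}^0=0$ that satisfies both the lower bound $\delta_t^\alpha\mathcal{B}^m\ge t_m^{-1}(\tau/t_m)^{\gamma}$ for all $m\ge1$ and the upper bound $\mathcal{B}^m\lesssim\ell_\gamma\,t_m^{\alpha-1}(\tau/t_m)^{\min\{0,\gamma\}}$. Granting this, the hypothesis $\delta_t^\alpha V^m\le t_m^{-1}(\tau/t_m)^\gamma$ gives $\delta_t^\alpha(V^m-\mathcal{B}^m)\le0$ together with $(V-\mathcal{B})^0=0$, so Lemma~\ref{lem_comparison} yields $V^m\le\mathcal{B}^m$, which is precisely the conclusion \eqref{B_def}.

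For the construction I would start from the continuous side, using power functions of type \eqref{B_L1_def}, whose Caputo derivative is known explicitly, $\pt_t^\alpha t^\beta=\frac{\Gamma(\beta+1)}{\Gamma(\beta+1-\alpha)}\,t^{\beta-\alpha}$. Choosing the exponent through $\beta-\alpha=-1-\gamma$ reproduces the forcing $t^{-1-\gamma}$, and the associated barrier $\sim t^{\alpha-1}(\tau/t)^{\gamma}$ is the correct one in the regime $\gamma<0$; for $\gamma>0$ the forcing decays rapidly and the bound is instead governed by the slowly varying term $\sim t^{\alpha-1}$, which accounts for the factor $(\tau/t)^{\min\{0,\gamma\}}$ in \eqref{B_def}. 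Evaluating such functions at the mesh points (with the convention $\mathcal{B}^0:=0$, which only increases $\delta_t^\alpha\mathcal{B}^m$ in view of the sign pattern in \eqref{delta_def_kappa} and therefore preserves the lower bound), the task reduces to transferring the continuous identity to the discrete inequality $\delta_t^\alpha\mathcal{B}^m\ge\tfrac12\pt_t^\alpha\mathcal{B}^m$, after which $\delta_t^\alpha\mathcal{B}^m$ inherits the desired lower bound up to a fixed constant.

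The hard part, and the reason \cite{Kopteva_Meng} imposed $r\le\frac{2-\alpha}{\alpha}$, is exactly this transfer. As noted in Remark~\ref{rem_nontrivial}, the L1 truncation error $|(\pt_t^\alpha-\delta_t^\alpha)B|$ of a single power barrier is not bounded by $\tfrac12\pt_t^\alpha B$ once $r$ is large, the difficulty originating in the initial-layer terms of the convolution sum \eqref{delta_def_kappa} (small $j$, where $\tau=t_1\simeq M^{-r}$). My remedy, following the strategy announced in Remark~\ref{rem_nontrivial}, is to estimate not a single barrier but a suitably weighted linear combination $W_N$ of such power functions, as in \eqref{W_N_def}, and to bound the \emph{ensemble} truncation error $(\pt_t^\alpha-\delta_t^\alpha)W_N$. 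The key claim to establish is that, with the exponents and weights chosen appropriately, the individually oversized initial-layer contributions largely cancel across the sum, so that $(\pt_t^\alpha-\delta_t^\alpha)W_N\le\tfrac12\pt_t^\alpha W_N$ holds for every $r\ge1$; here I would split the sum in \eqref{delta_def_kappa} into the near-origin range and the range where $t_{j-1}\gtrsim t_j$ (cf.\ Remark~\ref{rem_tau_j_1}), recombining the local estimates underlying \cite[Lemmas~2.3, 2.5 and 2.6]{Kopteva_Meng}. Verifying this cancellation, together with checking that $W_N$ still obeys the required upper bound $W_N\lesssim t_m^{\alpha-1}(\tau/t_m)^{\min\{0,\gamma\}}$, is the technical core of the argument.

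Finally, the resonant value $\gamma=0$ must be treated separately, since then $\beta=\alpha-1$ makes $\Gamma(\beta+1-\alpha)=\Gamma(0)$ singular and $\pt_t^\alpha t^{\alpha-1}$ degenerate. Here I would differentiate the identity $\pt_t^\alpha t^\beta=\frac{\Gamma(\beta+1)}{\Gamma(\beta+1-\alpha)}t^{\beta-\alpha}$ with respect to $\beta$ at $\beta=\alpha-1$; since $1/\Gamma(z)$ vanishes to first order at $z=0$, this yields $\pt_t^\alpha\bigl(t^{\alpha-1}\ln t\bigr)\sim t^{-1}$, so a logarithmic barrier of size $t^{\alpha-1}\bigl(1+\ln(t_m/\tau)\bigr)$ provides the forcing $\gtrsim t_m^{-1}$ and produces the factor $\ell_\gamma=1+\ln(t_m/\tau)$. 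The ensemble truncation-error bound of the previous paragraph applies to this barrier with only notational changes. Assembling the three cases and applying Lemma~\ref{lem_comparison} then completes the proof of \eqref{B_def}.
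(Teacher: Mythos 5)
Your high-level skeleton (discrete barrier plus the comparison principle, with an ensemble $W_N$ whose truncation error is to be dominated by $\tfrac12\pt_t^\alpha W_N$) coincides with the paper's, but that skeleton is exactly what Remark~\ref{rem_nontrivial} already announces; the substance of the lemma is the verification, which your proposal defers entirely ("the key claim to establish\dots is the technical core"). Moreover, the sketch of how you would verify it rests on building blocks and a mechanism that would not work. You take as components pure powers $t^{\beta}$ with $\gamma$-dependent exponent $\beta=\alpha-1-\gamma$ (and, for $\gamma=0$, the $\beta$-derivative $t^{\alpha-1}\ln t$). For $\gamma\ge\alpha-1$ — which includes the hardest values $\gamma$ near $\gamma^*:=\min\{\alpha,(2-\alpha)/r\}$ and the resonant case $\gamma=0$ — such a function has a non-integrable derivative at $t=0$, so its Caputo derivative \eqref{CaputoEquiv} is not even defined, and its curvature concentrated near the origin is precisely the source of the "oversized initial-layer" truncation error you are trying to avoid. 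The paper's component \eqref{B_L1_def} is not a pure power: it is $\min\{(s/t_n)t_n^{-\beta},\,s^{-\beta}\}$ with \emph{fixed} $\beta=1-\alpha$, i.e.\ linear on $[0,t_n]$, so it contributes \emph{zero} truncation error there; the parameter $\gamma$ enters only through the weights $c_n=(t_p/t_{p_n})^\gamma$ in \eqref{W_N_def}. Your proposal never introduces this capping, and without it the ensemble has no mechanism for controlling the near-origin error.

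Relatedly, your description of the mechanism — that oversized contributions "largely cancel across the sum" — is not what happens and cannot happen: since $B$ is concave, $(B^I-B)\ge0$, so all contributions to $(\pt_t^\alpha-\delta_t^\alpha)W_N$ have one sign and nothing cancels. What the ensemble actually buys is a \emph{boosted forcing}: for $p_J<m\le p_{J+1}$ the component whose kink $t_{p_J}$ is comparable to $t_m$ gives $\pt_t^\alpha W_N(t_m)\gtrsim c_J (t_{p_J}/t_m)^\alpha t_m^{-1}\simeq (t_p/t_m)^\gamma t_m^{-1}$, free of the fatal decay factor $(t_p/t_m)^\alpha$ that a single barrier produces, while the ensemble truncation error is at most $S_J$ times the truncation error of the single outermost barrier $B(\cdot\,;t_p)$ on $(t_p,t_m)$ (because all components coincide there and are linear before their kinks). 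Closing the argument then needs two further ingredients absent from your proposal: the preliminary reduction to $\gamma\le\gamma^*$ via the discrete comparison principle (larger $\gamma$ is handled by monotonicity), which is what makes the absorption $S_J\,(t_p/t_m)^{\gamma^*}\lesssim (t_p/t_m)^\gamma$ in \eqref{s_J} valid, and the choice of a large but fixed offset $p$, which supplies the smallness factor $p^{-(2-\alpha)}$ that lets the truncation error be dominated by $\tfrac12\pt_t^\alpha W_N$ for every $r\ge1$. Incidentally, the logarithm for $\gamma=0$ then comes for free from the weight sum $S_N=N+1\simeq 1+\ln(t_{p_N}/t_p)$ in \eqref{c_aux1}, with no need for a separate logarithmic barrier.
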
\vspace{-0.3cm}

%\smallskip
\begin{proof}
To simplify the presentation, we shall give a proof for a standard graded grid $\{t_j=T(j/M)^r\}_{j=0}^M$, while one can
extend it to a more general
\eqref{t_grid_gen} along the lines of the proof of \cite[Theorem~2.1(i)]{Kopteva_Meng}.

Fix $\alpha$ and $r$,  and note that it suffices to prove the desired result \eqref{B_def} for
\beq\label{gamma_star}
\gamma\le\gamma^*:=\min\{\alpha,\,(2-\alpha)/r\}.
\eeq
Indeed, %assuming \eqref{B_def} is established for $\gamma=\gamma^*$.
for any $\gamma>\gamma^*$, the discrete maximum/comparison principle for the operator $\delta_t^\alpha$
implies that if $\delta_t^\alpha \bar V^m= t_m^{-1}(\tau/ t_m)^{\gamma^*}$ subject to $\bar V^0=0$, then
$V^m\le \bar V^m$ $\forall\, m\ge 0$.
 Assuming \eqref{B_def} is established for $\gamma=\gamma^*$, one concludes that $V^m\le \bar V^m\lesssim t_m^{\alpha-1}$, which is equivalent to the assertion in \eqref{B_def}
 for $\gamma>\gamma^*$ under consideration.
 Thus, \eqref{gamma_star} will be assumed for the remainder of the proof.

Set %or {\color{magenta}$B(s\,;m)$??}%$\beta:=1-\alpha$ and %$B(s):=\min\bigl\{(s/t_p)t_p^{-\beta}, s^{-\beta} \bigr\}$, and also $B^j:=B(t_j)$.
\beq\label{B_L1_def}
\beta:=1-\alpha,\qquad B(s\,;t_n):=\min\bigl\{(s/t_n)\,t_n^{-\beta}, s^{-\beta} \bigr\},
%\mbox{where}\quad
%\quad
%1\le p\le n\le M,
%,\qquad B^j:=B(t_j).
\eeq
where $n \ge 1$.
It is shown in the proof of \cite[Lemma~2.3 (see (2.4))]{Kopteva_Meng} that
\beq\label{app_B_eq}
\Gamma(\beta)\,
\pt^\alpha_t B(t\,;t_n)
%\ge t_p^{-\beta} t^{-\alpha}\Bigl\{1-(1-\hat t_p)^{1-\alpha}\Bigr\}
\ge\left\{
\begin{array}{lll}
%\{\Gamma(1-\alpha)\}^{-1}
t_n^{-\beta} t^{-\alpha}\,(\beta t_n/t)
%\color{red}=\beta t_n^{\alpha}t^{-\alpha-1}
&=%\{\Gamma(1-\alpha)\}^{-1}
\beta\, (t_n/t)^{\alpha}\,t^{-1}
&\quad\forall\,t>t_n>0,\\
\beta^{-1}\, (t/ t_n)^{\beta}\,t_n^{-1}
&\ge 0
&\quad\forall\,0<t\le t_n.
%={\color{blue}\beta p^{r\alpha}\,(\tau^\alpha t^{-\alpha-1})}.
\end{array}\right.
\eeq
%as well as $\pt^\alpha_t B(t\,;t_n)>0$ $\forall\,t>0$.
%
Next, for any fixed $0\le N\le \infty$, consider the barrier function
\beq\label{W_N_def}
W_N(s):=\sum_{n=0}^N  c_nB(s\,;t_{p_n}),\qquad\mbox{where}\quad p_n:= 2^n p\qquad
c_n:=(t_p/t_{p_n})^\gamma,
%\color{red}=2^{-\gamma r n},
%,\quad p_N\le M.
\eeq
and $1\le p\lesssim 1$ is a sufficiently large fixed integer to be chosen later in the proof
to ensure~\eqref{bar_B_new}, so the dependence on $p$ will be shown explicitly in all bounds of type $\lesssim$ leading to~\eqref{bar_B_new}.
Note that (in view of $t_j=T(j/M)^r$)
\beq\label{c_aux2}
p_n< m\le p_{n+1}\;\;\Rightarrow\;\;
t_{p_n}/  t_m \ge t_{p_n}/  t_{p_{n+1}} \ge 2^{-r}
\;\;\mbox{and}\;\;
% 2^{\min\{r\gamma,\,0\}}\le c_n (t_p/t_m)^{-\gamma}\le 2^{\max\{r\gamma,\,0\}} .
c_n \simeq (t_p/t_m)^{\gamma}.
\eeq
Furthermore, noting that the weights $c_n=(t_p/t_{p_n})^\gamma=2^{-\gamma r n}$ form a geometric sequence, for any $N\ge 0$, one gets
\beq\label{c_aux1}
\sum_{n=0}^N c_n\lesssim S_N:=\left\{\begin{array}{ll}
1&\mbox{if~~}\gamma>0,
\\
1+\ln(t_{p_N}/t_p)&\mbox{if~~}\gamma=0,
\\
%c_N=
(t_p/t_{p_N})^\gamma&\mbox{if~~}\gamma<0.
\end{array}\right.
\eeq
In particular, for $\gamma=0$, we combined  $\sum_{n=0}^N c_n=N+1$ with
$\ln(t_{p_N}/t_p)=N\ln(2^{r})$, while
for $\gamma<0$, we used $\sum_{n=0}^N c_n\lesssim c_N$.

%such that  $p_N\le M$,
We claim %, $\forall\, m\le p_{N+1}$,
that one can choose a sufficiently large $1\le p\lesssim 1$ such that, for any fixed
$N\ge 0$,
\beq\label{bar_B_new}
W_N(t_m)\lesssim t_m^{-\beta}\,S_N\quad\mbox{and}\quad\delta_t^\alpha W_N(t_m)\gtrsim t_m^{-1}\,(t_p/ t_m)^{\gamma}\qquad \forall\, m\le p_{N+1},
\eeq
where the error constant in the second bound is independent of $N$,
but depends on %$\alpha$, $\gamma$, $r$, and a sufficiently large
$p$.

Once \eqref{bar_B_new} is established,
a comparison of $\delta^\alpha_t W_N(t_m)$ with $\delta^\alpha_t V^m$ in \eqref{B_def}
 immediately implies (in view of the discrete maximum principle for the operator $\delta_t^\alpha$
on the mesh $\{t_j,j\le  p_{N+1}\}$, and the observation that
$t_p=\tau p^r$, where $1\le p\lesssim 1$) that
$V^m\lesssim W_N(t_m)\lesssim t_m^{\alpha-1}S_N$ $\forall\,m\le p_{N+1}$.
Combining this with the  bound \eqref{c_aux1} on $S_N$ yields a bound on $V^m$ $\forall\,m\le p_{N+1}$,
which involves $(t_{p_N}/t_p)$.
The latter bound immediately implies the desired bound \eqref{B_def}
for $p_N< m\le p_{N+1}$, as for this range $t_{p_N}/t_p\le t_m/t_p\le t_m/\tau$.
As this argument applies for any fixed $N\ge 0$
(since the error constant in \eqref{bar_B_new} is independent of $N$), one gets the desired bound on $V^m$ $\forall\,p<m\le M$.
For the remaining range $ m\le p$,
one has $V^m\lesssim W_0(t_m)\lesssim t_m^{\alpha-1}S_0=t_m^{\alpha-1}$ (as $S_0=1$), which, combined with $t_m/\tau\ge 1$, again yields \eqref{bar_B_new}.
Note also that
for $\gamma>0$, one gets $S_\infty=1$, so one can simplify the above argument and use $V^m\lesssim W_\infty(t_m)\lesssim t_m^{\alpha-1}S_N$ $\forall\,m\ge 1$.

Thus, it remains to establish \eqref{bar_B_new} for any fixed $\gamma\le \gamma^*$.
The first bound in \eqref{bar_B_new} is straightforward, in view of the definition of $W_N$ combined with $B(s\,;t_{p_n)}\le s^{-\beta}$ $\forall\, n$, and \eqref{c_aux1}.
For the second bound in \eqref{bar_B_new}, we shall start with a desired bound on $\pt_t^\alpha W_N(t_m)$, and then show that
the truncation error $|(\pt_t^\alpha-\delta_t^\alpha) W_N(t_m)|$ is dominated by $\frac12\pt_t^\alpha W_N(t_m)$.
To proceed with this plan,
recall that $\pt_t^\alpha  B(t\,;t_{p_n})\ge 0$ $\forall\,t>0$.
Hence, for each $0\le n\le N$
for the range $p_n<m\le p_{n+1}$, one has
\begin{align}\label{pt_W}
\pt_t^\alpha W_N(t_m)\ge c_n\,\pt_t^\alpha  B(t_m\,;t_{p_n})&\ge c_n\{\Gamma(\beta)\}^{-1}\beta\, (t_{p_{n}}/t_m)^{\alpha}\,t_m^{-1}
\gtrsim (t_p/t_m)^{\gamma}\,t_m^{-1},
%\\
%&\ge  2^{\min\{0,\,\gamma r\}-\alpha r}\,\{\Gamma(\beta)\}^{-1}\beta\,
%(t_p/t_m)^{\gamma}t_m^{-1},
\end{align}
where we first used the first line in \eqref{app_B_eq}, and then \eqref{c_aux2}.
Similarly, for $1\le m\le p$, one gets $\pt_t^\alpha W_N(t_m)\ge c_0\,\pt_t^\alpha  B(t_m\,;t_{p})$, where $c_0=1$,
so
the second line in \eqref{app_B_eq} yields
$\pt_t^\alpha W_N(t_m)\gtrsim (t_m/t_p)^{\beta}\,t_p^{-1}\ge  p^{-r\,\max\{0,\,\beta+\gamma+1\}}\,t_m^{-1}(t_p/t_m)^\gamma$, where we also used
$t_m/t_p\ge t_1/t_p=p^{-r}$.

Our next task is to bound $|(\pt_t^\alpha-\delta_t^\alpha) W_N(t_m)|$.
When evaluating the truncation errors for each $B(t_m\,;t_{p_n})$
appearing in $W_N(t_m)$, note that while this function remains linear in $t_m$ (i.e. for $m\le{p_n}$), it does not induce any truncation error.
Hence, we immediately get $(\pt_t^\alpha-\delta_t^\alpha)W_N(t_m)=0$, and so
the second bound in \eqref{bar_B_new}, for $m\le p$.

To estimate $|(\pt_t^\alpha-\delta_t^\alpha) W_N(t_m)|$
for $m>p$, note that the evaluation in the proof of \cite[Lemma~2.3]{Kopteva_Meng} (see, in particular, \cite[(2.5)]{Kopteva_Meng}) yields, for $m>p_n$,
\begin{align*}
\Gamma(\beta)\,(\pt_t^\alpha-\delta_t^\alpha)B(t_m\,;t_{p_n})&=
\int_{t_{p_n}}^{t_m}\!\!\pt_s(B-B^I)(s\,;t_{p_n})\,(t_m-s)^{-\alpha}ds\\
&=\alpha\int_{t_{p_n}}^{t_m}\!\! (B^I-B)(s\,;t_{p_n})\,(t_m-s)^{-\alpha-1} ds.
\end{align*}
Here $B^I(s\,;t_{p_n})$ is the standard piecewise-linear interpolant of $B(s\,;t_{p_n})$ on $\{t_j\}$, and, recalling the definition of the latter,
note that
$B(s\,;t_{p_n})=B(s\,;t_{p})$ for $s>t_{p_n}$,
so
$(B^I-B)(s\,;t_{p_n})=(B^I-B)(s\,;t_{p})\ge 0$ for $s>t_{p_n}$, while $(B^I-B)(s\,;t_{p_n})=0$ for $s\le t_{p_n}$.
Combining the above observations, one concludes that, $\forall\,m> p$,
\begin{align*}
\Gamma(\beta)\,(\pt_t^\alpha-\delta_t^\alpha)W_N(t_m)
&=\sum_{n\,:\, p_n<m}c_n\,\alpha\int_{t_{p_n}}^{t_m}\!\! (B^I-B)(s\,;t_{p})\,(t_m-s)^{-\alpha-1} ds\\
&\le %S_J\,
\underbrace{\Bigl\{\sum_{n\,:\, p_n<m}c_n\Bigr\}}_{{}\lesssim S_J}\,
\alpha\int_{t_{p}}^{t_m}\!\! (B^I-B)(s\,;t_{p})\,(t_m-s)^{-\alpha-1} ds.
\end{align*}
Here
$J=J(m)$ is the maximum value in the set $\{n: p_n<m\}$, i.e.
$p_J<m\le p_{J+1}$, so $\sum_{n: p_n<m} c_n\lesssim S_J$, in view of \eqref{c_aux1}.

The above integral is (up to a constant factor) the truncation error of $B(s\,;t_{p})$ on the interval $(t_p,t_m)$, which is evaluated in
\cite[(2.6) and (2.7)]{Kopteva_Meng}. In particular, the first line of \cite[(2.7)]{Kopteva_Meng} yields the following bound
(while the second line in \cite[(2.7)]{Kopteva_Meng} already relies on the restriction $r\le\frac{2-\alpha}\alpha$, which we avoid in this proof):
\beq
%\begin{align}
\label{trunc_er_B}
\left|(\pt_t^\alpha-\delta_t^\alpha)W_N(t_m)\right|
%&\lesssim S_J\, \tau^{2/r}\!\int_{t_{p}}^{t_m}\!\!\!s^{-\beta-2/r}\,(t_m-s)^{-\alpha-1}\,\min\bigl\{1,(t_m-s)/\tau_m\bigr\}\,ds.
%\\
%&\lesssim S_J\,
%t_m^{-1}(\tau/t_m)^{2/r}
%\underbrace{\int_{\hat t_{p}}^{1}\!\hat s^{-\beta-2/r}(1-\hat s)^{-\alpha-1}\min\bigl\{1,(1-\hat s)/\hat\tau_m\bigr\}\,d\hat s}_{\lesssim\, \hat t_p^{\alpha-2/r}  +\hat\tau_m^{-\alpha}}
%\\
%
%&
\lesssim S_J\,
t_m^{-1}
\Bigl[ (\tau/t_p)^{2/r}(t_p/t_m)^\alpha + (\tau/t_m)^{(2-\alpha)/r}\Bigr].
\eeq
%\\
%\notag
%\intertext
{Alternatively, this truncation error bound can be viewed as a particular case
of a more general bound obtained in the proof of Lemma~\ref{lem_r} below (as elaborated in Remark~\ref{rem_trunc}).
Next, from \eqref{trunc_er_B} one gets}
\begin{align}
\notag
\left|(\pt_t^\alpha-\delta_t^\alpha)W_N(t_m)\right|
&\lesssim
S_J\,t_m^{-1}\,(\tau/t_p)^{(2-\alpha)/r} \Bigl[ (\tau/t_p)^{\alpha/r}(t_p/t_m)^\alpha+ (t_p/t_m)^{(2-\alpha)/r}\Bigr]
\\
%\notag
%&\lesssim
%(\tau/t_p)^{(2-\alpha)/r} \Bigl[ (t_p/t_m)^{\gamma^*}\Bigr]\,S_J\,t_m^{-1}
%\\
\notag
&\lesssim
S_J\,t_m^{-1}\,p^{-(2-\alpha)} \Bigl[ (t_p/t_m)^{\gamma^*}\Bigr],
\end{align}
where we used
$(\tau/t_p)^{(2-\alpha)/r}=(p^{-r})^{(2-\alpha)/r}=p^{-(2-\alpha)}$, as well as
$(\tau/t_p)^{\alpha/r}\le 1$, and then $t_p/t_m<1$ combined with \eqref{gamma_star}.

A comparison of the above bound with \eqref{pt_W} (in both of which, importantly, the dependence on $p$
is shown explicitly) implies that it remains
 to prove that
 \beq\label{s_J}
S_J\,\Bigl[ (t_p/t_m)^{\gamma^*}\Bigr]\lesssim (t_p/t_m)^\gamma\qquad \mbox{for any}\quad \gamma\le\gamma^*.
  \eeq
 Then, indeed, %in view of $(\tau/t_p)^{(2-\alpha)/r}=(p^{-r})^{(2-\alpha)/r}=p^{-(2-\alpha)}$
 choosing a sufficiently large $p$ ensures that $|(\pt_t^\alpha-\delta_t^\alpha)W_N(t_m)|$
 is dominated by $\frac12\pt_t^\alpha W_N(t_m)$ for any $m\ge p$, which yields the second bound in \eqref{bar_B_new}.
For $0<\gamma\le \gamma^*$, recalling \eqref{c_aux1}, one gets $S_J=1$, so \eqref{s_J} is obvious.
For $\gamma\le 0$, recall also that
$J=J(m)$ is such that
$p_J<m\le p_{J+1}$, so $t_{p_J}/t_p\le t_m/t_p$, which, combined with \eqref{c_aux1} again yields \eqref{s_J}
In particular, for $\gamma=0$, we used $S_J\le 1+\ln(t_m/t_p)$ leading to
$ S_J(t_p/t_m)^{\gamma^*} \lesssim 1$,
while for any $\gamma<0$, we used $S_J\le (t_p/t_m)^{\gamma}$ combined with
$ (t_p/t_m)^{\gamma^*}\le 1$.
This completes the proof of \eqref{bar_B_new}, and, hence, of \eqref{B_def}.
\end{proof}
%\hfill$\square$

\section{Error for a paradigm example without spatial derivatives}\label{sec_error_ode}
We shall first illustrate our error analysis using the   simplest semilinear example without spatial derivatives and its disretization of type~\eqref{semi_semidiscr_method}:
\begin{subequations}\label{simplest}%\color{pass}
\begin{align}
\label{simplest_a}
\pt_t^\alpha u(t)\;+\;\;f(u)\;\;&=g(t)&&\hspace{-0cm}\mbox{for}\;\;t\in(0,T],&&\hspace{-0.0cm} u(0)=u_0,
\\[3pt]
\delta_t^\alpha U^m+F(U^m,U^{m-1})&=g(t_m)&&\hspace{-0cm}\mbox{for}\;\;m=1,\ldots,M,&&\hspace{-0.0cm} U^0=u_0.
\label{simplest_b}
\end{align}
\end{subequations}
It will be assumed throughout this section that \eqref{simplest_a} has a unique solution that, for some $\sigma \in (0,1) \cup (1,2)$, satisfies
\beq\label{ass_u_ode}
\textstyle\bigl|\frac{d^l}{dt^l}u(t)\bigr|\lesssim 1+t^{\sigma-l}\qquad \mbox{for}\;\; l=0,1,2,\;\;t\in(0,T].
\eeq

\noindent
{\it Example.}\,
The simplest case of \eqref{simplest_a} subject to \eqref{ass_u_ode} is
with
 $f(u):=0$ and $g(t)=c_0\,t^{\sigma-\alpha}$,
 where $c_0=\Gamma(\sigma+1)/\Gamma(\sigma+1-\alpha)$,
 which yields
 $u(t)=u_0+t^\sigma$.

\subsection{Truncation error of the L1 fractional-derivative operator}

For the truncation error associated with our discrete fractional-derivative operator $\delta_t^\alpha$, we have the following bound.
%$\color{pass}r^m:=\delta_{t}^{\alpha} u(t_m)-\pt_t^\alpha u(t_m)$, $\forall\,m\ge 1$, of the L1 method.

%For the truncation error of the discrete fractional-derivative $\delta_t^\alpha$ from \eqrefl{delta_def}, we have the following result.

\begin{lemma}[truncation error for $\delta_t^\alpha$]\label{lem_r}
%When one assumes the solution of the problem (\ref{eqn:Prob}) to be of the form given above (\ref{eqn:solass}), for $\sigma \in (0,1) \cup (1,2)$ and $r^m$ as defined in (\ref{eqn:TE}), under the conditions of the $L1$ method outlined in section \ref{section:4.2} with $r \geqslant 1$, we have
Let $u$ satisfy \eqref{ass_u_ode} with some $\sigma \in (0,1) \cup (1,2)$.
Then for the truncation error $r^m:=\delta_{t}^{\alpha} u(t_m)-\pt_t^\alpha u(t_m)$
on the temporal grid~\eqref{t_grid_gen}, one has
\beq\label{r_bound_simple}
|r^m|\lesssim  \tau^{\sigma - \alpha} (\tau/t_m)^{\gamma+1}\quad\forall\,m\ge 1,
\quad\mbox{where}\;\;
\textstyle
\gamma:=\min\bigl\{\alpha,\,\,\frac{2-\alpha}{r}+\alpha-\sigma-1\bigr\}.%\vspace{-0.5cm}
\eeq
\end{lemma}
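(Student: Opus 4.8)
The plan is to estimate the truncation error $r^m = \delta_t^\alpha u(t_m) - \pt_t^\alpha u(t_m)$ by writing it as the difference between the exact Caputo derivative and its L1 approximation, and recognizing that $\delta_t^\alpha u(t_m)$ is exactly $\pt_t^\alpha u^I(t_m)$, where $u^I$ is the piecewise-linear interpolant of $u$ on the mesh. This gives the integral representation
\beq\label{trunc_rep}
\Gamma(1-\alpha)\, r^m = \int_0^{t_m} \pt_s(u^I-u)(s)\,(t_m-s)^{-\alpha}\,ds
= \alpha\int_0^{t_m}(u^I-u)(s)\,(t_m-s)^{-\alpha-1}\,ds,
\eeq
the second form following by integration by parts (using $(u^I-u)(t_m)=0$ and the continuity of $u^I-u$ at $0$). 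The standard interpolation error estimates, based on \eqref{ass_u_ode}, are the main analytical input: on a generic interval $(t_{j-1},t_j)$ with $j\ge2$ one has $|(u^I-u)(s)|\lesssim \tau_j^2\,\max_{(t_{j-1},t_j)}|u''|\lesssim \tau_j^2\,t_{j-1}^{\sigma-2}$, while on the first interval $(0,t_1)$, where $u''$ is not integrable for $\sigma<1$, one must instead use the cruder bound $|(u^I-u)(s)|\lesssim \tau^\sigma$ coming directly from $|u(s)-u(0)|\lesssim s^\sigma$ and the bound on $u^I$ there.

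First I would split the integral in \eqref{trunc_rep} into the contribution from the initial interval $(0,t_1)$ and the contribution from $(t_1,t_m)$. For the initial-interval piece, inserting $|(u^I-u)(s)|\lesssim \tau^\sigma$ and integrating $(t_m-s)^{-\alpha-1}$ over $(0,t_1)$ produces a factor behaving like $\tau^\sigma\,t_m^{-\alpha-1}\tau$ when $m$ is large (i.e. $t_m\gg\tau$), and like $\tau^{\sigma-\alpha}$ when $m\simeq 1$; both are consistent with the claimed bound $\tau^{\sigma-\alpha}(\tau/t_m)^{\gamma+1}$ once one checks the exponents against $\gamma$. For the remaining sum over $j=2,\ldots,m$, I would insert $|(u^I-u)(s)|\lesssim \tau_j^2 t_{j-1}^{\sigma-2}$, use $t_{j-1}\simeq t_j$ from Remark~\ref{rem_tau_j_1}, and bound $\tau_j\lesssim\tau^{1/r}t_j^{1-1/r}$ from the mesh condition \eqref{t_grid_gen}. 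The delicate part is the weight $(t_m-s)^{-\alpha-1}$: I would treat separately the "history" intervals far from $t_m$ (where $t_m-s\gtrsim t_m$, or more carefully where one sums a discrete analogue of $\int t_j^{\sigma-2+2(1-1/r)}(t_m-t_j)^{-\alpha-1}$), and the "local" intervals near $s=t_m$ where the singular kernel is integrated against the smooth interpolation error and yields a term scaling like $\tau_m^{2-\alpha}t_m^{\sigma-2}\simeq \tau^{(2-\alpha)/r}t_m^{\sigma-2+(2-\alpha)(1-1/r)}$.

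The main obstacle will be organizing the history sum so that the two competing exponents in the definition of $\gamma$ emerge cleanly, since the whole point of this lemma is to track the exact $r$-dependence. Concretely, $\gamma=\min\{\alpha,\ \frac{2-\alpha}{r}+\alpha-\sigma-1\}$ encodes a competition between the near-$t_m$ local truncation error (giving the exponent $\frac{2-\alpha}{r}$-type contribution) and the initial-layer/history contribution (giving the $\alpha$-type contribution); each regime dominates for different ranges of $r$ and $\sigma$, and I expect this is exactly the calculation carried out in \cite[(2.6)--(2.7)]{Kopteva_Meng} for the barrier functions, now applied to the true solution $u$ rather than to $B(s;t_n)$. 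Since Remark~\ref{rem_trunc} promises that \eqref{trunc_er_B} is a special case of this lemma's proof, I would structure the bound to reduce, for the model barrier, to that estimate. I would conclude by collecting the initial-interval, history, and local contributions, verifying that each is bounded by $\tau^{\sigma-\alpha}(\tau/t_m)^{\gamma+1}$ with the stated $\gamma$, with the $\min$ arising precisely from taking the worse of the two exponent regimes, and separately checking the case $m=1$ where $t_m\simeq\tau$ and the bound collapses to $|r^1|\lesssim\tau^{\sigma-\alpha}$.
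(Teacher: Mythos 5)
Your outline follows the same route as the paper's proof: write $\Gamma(1-\alpha)\,r^m=\alpha\int_0^{t_m}(u-u^I)(s)\,(t_m-s)^{-\alpha-1}ds$ (your second display has the sign of the integrand flipped, which is immaterial), estimate the interpolation error interval by interval via \eqref{ass_u_ode} and \eqref{t_grid_gen}, and let the two exponents in $\gamma$ arise from the history and near-$t_m$ contributions; that part of the plan is viable and matches the paper's $\mathcal{I}_2$, $\mathcal{I}_3$ computation in spirit. The genuine gap is at the kernel singularity $s=t_m$. Your only stated bounds on the intervals touching it are flat in $s$: $|(u^I-u)|\lesssim\tau^\sigma$ on $(0,t_1)$ and $|(u^I-u)|\lesssim\tau_j^2t_{j-1}^{\sigma-2}$ on $(t_{j-1},t_j)$. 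Paired with the kernel $(t_m-s)^{-\alpha-1}$, these produce divergent integrals: for $m=1$ one has $\int_0^{t_1}(t_1-s)^{-\alpha-1}ds=\infty$, so the promised ``separate check'' that $|r^1|\lesssim\tau^{\sigma-\alpha}$ has no mechanism behind it, and the same failure occurs on the last interval $(t_{m-1},t_m)$ for every $m$. What is needed --- and what the paper's \eqref{chi0} and \eqref{chi1} provide --- are interpolation bounds carrying a factor that vanishes at the singular endpoint: $|\chi(s)|\lesssim(t_1-s)(s^{\sigma-1}+t_1^{\sigma-1})$ on $(0,t_1)$ and $|\chi(s)|\lesssim\min\{\tau_j^2,\,(t_m-s)\tau_j\}\,s^{\sigma-2}$ beyond; the factors $(t_1-s)$ and $(t_m-s)$ lower the kernel power from $-\alpha-1$ to $-\alpha$ and restore integrability. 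The $(t_1-s)$ factor also closes a second hole in your case split for the initial interval: on a general quasi-graded mesh \eqref{t_grid_gen} nothing prevents $t_2-t_1\ll t_2$, so your dichotomy ``$t_m\gg\tau$ versus $m\simeq 1$'' does not cover all $m$; the paper instead uses the pointwise comparison $(t_m-s)^{-\alpha-1}\le(\tau/t_m)^{\alpha+1}(t_1-s)^{-\alpha-1}$ on $(0,t_1)$, valid uniformly for all $m\ge1$.

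A smaller but real error: your justification of the first-interval bound via $|u(s)-u(0)|\lesssim s^\sigma$ is false for $\sigma\in(1,2)$, where \eqref{ass_u_ode} gives only $|u'|\lesssim 1$ and hence $|u(s)-u(0)|\lesssim s$ (consider $u(t)=t+t^\sigma$). The conclusion survives because the interpolation error is invariant under subtraction of affine functions: as in the paper, replace $u$ by $\tilde u(t):=u(t)-u(0)-u'(0)\,t$, which satisfies $|\tilde u'(t)|\lesssim t^{\sigma-1}$, and note $u-u^I=\tilde u-\tilde u^I$. With these repairs --- the endpoint-weighted interpolation bounds, the uniform kernel comparison on $(0,t_1)$, and the affine-subtraction argument for $\sigma\in(1,2)$ --- your outline becomes essentially the paper's proof.
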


\begin{proof}
We start by representing $r^m$ via the function $\chi := u - u^I$, where $u^I(t)$ is the piecewise linear interpolant of $u(t)$ on our temporal mesh $\{t_j\}$.
Note that $\forall\,s\in(t_{j-1},t_j)$ one has $\frac{u(t_{j})-u(t_{j-1})}{\tau_j}-u'(s)=-\chi'(s)$. Hence, combining \eqref{CaputoEquiv} and \eqref{delta_def}
and then integrating by parts yields
\beq\label{trun_int}
\Gamma(1-\alpha)\,r^m%\!=\!\sum_{j=1}^m\!\int_{t_{j-1}}^{t_j}\!\!\!\!(t_m-s)^{-\alpha}\![\delta_t u(t_j)-\pt_s u(s)]ds
=\int_0^{t_m}\!\!\!(t_m-s)^{-\alpha}\,[-\chi'(s)]ds
=\alpha%\sum_{j=1}^m\!\int_{t_{j-1}}^{t_j}
\int_0^{t_m}
\!\!\!(t_m-s)^{-\alpha-1}\chi(s)\,ds.
\eeq
%where we also used an integration by parts %.(the resulting integral well-defined in view of $|\chi(s)|\lesssim C(t_m-s)$ $\forall\,s\le t_m$).

Next, note that
for $s\in(t_{j-1},t_j)$ with $2\le j\le m$ (i.e. away from the initial singularity),
 in view of~\eqref{ass_u_ode},
one gets
$|\chi(s)|\lesssim (t_j-s)(s-t_{j-1})\,t_{j-1}^{\sigma-2}$. So,
using $t_{j-1}\simeq s \simeq t_j$ (as $j\ge 2$) and
$\tau_j\lesssim\tau^{1/r}s^{1-1/r}$,
 both of which follow from \eqref{t_grid_gen} (as discussed in Remark~\ref{rem_tau_j_1}) ,
yields
\beq\label{chi1}
|\chi(s)|%\lesssim (t_j-s)(s-t_{j-1})t_{j-1}^{\sigma-2}
\lesssim
\min \bigl\{\tau_j^2,\,(t_m-s)\,\tau_j\bigr\}\, s^{\sigma-2}
\lesssim \min\bigl\{
\tau^{2/r}s^{\sigma-2/r},\,
(t_m-s)\, \tau^{1/r}s^{\sigma-1-1/r}
\bigr\}.
\eeq

For $s\in(0,t_1)$, we shall instead use
\beq\label{chi0}
|\chi(s)|\lesssim (t_1-s)\,(s^{\sigma-1}+t_1^{\sigma-1}),
\eeq
which
holds true whether $\sigma$ is in $(0,1)$ or $(1,2)$. %[should be $(t_1-s)$]}
For $\sigma\in(0,1)$, %the factor $(s^{\sigma-1}+t_1^{\sigma-1})\simeq s^{\sigma-1}$, and
this bound follows from
$|\chi(s)|\le |u(t_1)-u(s)|+|u(t_1)-u^I(s)|$
combined with
$|u'(s)|\lesssim s^{\sigma-1}$.
To be more precise, we also employed
$|u(t_1)-u(s)|\le (t_1-s)\,\sup_{(s,t_1)}|u'|$,  as well as  $|u(t_1)-u^I(s)|=(t_1-s)\,t_1^{-1}|u(t_1)-u(0)|$, where
$|u(t_1)-u(0)|\lesssim |\int_0^{t_1}s^{\sigma-1}ds|\simeq t_1^\sigma$.
For $\sigma\in(1,2)$, we only have
$|u'(s)|\lesssim 1+s^{\sigma-1}\simeq 1$.
So we modify the argument used for $\sigma\in(0,1)$ by replacing $u(t)$ with $\tilde u(t):=u(t)-u(0)-u'(0)\,t$, which enjoys
$|\tilde u'(t)|\le\int_0^t|u''(s)|ds\lesssim t^{\sigma-1}$ (so $\sup_{(s,t_1)}|\tilde u'|\lesssim t_1^{\sigma-1}$),
while $\chi=u-u^I=\tilde u-\tilde u^I$.

%(while for $u(t)$ we only have $|u'(s)|\lesssim 1+s^{\sigma-1}\simeq 1$).

Now, splitting the integral in \eqref{trun_int} into three parts, over $(0,t_1)$, $(t_1,t_m^*)$, and $(t_m^*,t_m)$,
and then using \eqref{chi0} on the first interval and \eqref{chi1} elsewhere,
one gets
\begin{align*}
|r^m |{}\lesssim
\mathcal{I}_1{}&+\mathcal{I}_2+\mathcal{I}_3:=
%\underbrace{
\int_{0}^{t_1}\!\!(t_m -s)^{-\alpha -1}(t_1-s)\,(s^{\sigma-1}+t_1^{\sigma-1})\,ds
%}_\text{$\mathcal{I}_1$}
\\
&{}+
%\underbrace{
\int_{t_1}^{t_m^*}\!\!
(t_m -s)^{-\alpha -1}\tau^{2/r}s^{\sigma-2/r}\,ds
%}_\text{$\mathcal{I}_2$}
+
%\underbrace{
\int^{t_{m}}_{t_m^*}\!\!(t_m -s)^{-\alpha}\,\tau^{1/r}s^{\sigma-1-1/r}\,ds
%}_\text{$\mathcal{I}_3$}
.
\end{align*}
Here %the intermediate point
$t_m^*:=\max\{t_1,\,t_m(1-\theta)\}$, where $\theta=\theta_m:=\frac12(\tau/t_m)^{1/r}\le\frac12$
(note that the second relation in \eqref{t_grid_gen} can then be rewritten as $\tau_m/t_m\lesssim \theta_m$).

To estimate $\mathcal{I}_1$,  note that $t_m/t_1\le(t_m-s)/(t_1-s)$
implies that
$(t_m -s)^{-\alpha -1}\le (\tau/t_m)^{\alpha +1}(t_1 -s)^{-\alpha -1}$.
So, with the change of variable $\hat s:=s/t_1=s/\tau$, one gets
\begin{align*}
\mathcal{I}_1&{}\le (\tau/t_m)^{\alpha +1}\!\int_{0}^{t_1}\!\!(t_1 -s)^{-\alpha}\,(s^{\sigma-1}+t_1^{\sigma-1})\,ds
\\
&{}=\tau^{\sigma-\alpha}(\tau/t_m)^{\alpha +1}
\!\int_{0}^{1}\!\!(1 -\hat s)^{-\alpha}\,(\hat s^{\sigma-1}+1)\, d\hat s
\\[2pt]
&{}\lesssim  \tau^{\sigma-\alpha}(\tau/t_m)^{\alpha +1}.
\end{align*}

For the remaining integrals,
without loss of generality, we shall assume that $t_m^*>t_1$
(as, otherwise, $\mathcal{I}_2=0$, while $\mathcal{I}_3$ is estimated with minimal changes).
Thus,
a different change of variable $\hat s:=s/t_m$  yields
\beq\label{I2_bound}
\mathcal{I}_2
{}\lesssim t_m^{\sigma-\alpha}\,(\tau/t_m)^{2/r}
\underbrace{
\int_{\tau/t_m}^{1-\theta}
(1 -\hat s)^{-\alpha -1}\,\hat s^{\sigma-2/r} d\hat s
}_{{}\lesssim\, (\tau/t_m)^{\sigma-2/r+1}+\theta^{-\alpha}}
%\\
%&{}\lesssim
% t_m^{\sigma-\alpha}\,(\tau/t_m)^{2/r}
%\bigl[
%(\tau/t_m)^{\sigma-2/r+1}+\theta^{-\alpha}
%\bigr]
\eeq
and
\beq\label{I3_bound}
\mathcal{I}_3\lesssim
t_m^{\sigma-\alpha}\,(\tau/t_m)^{1/r}
\underbrace{
\int_{1-\theta}^{1}
(1 -\hat s)^{-\alpha }\,\hat s^{\sigma-1-1/r} d\hat s
}_{{}\lesssim\, \theta^{1-\alpha}}\,,
%\lesssim t_m^{\sigma-\alpha}\,(\tau/t_m)^{1/r}\,
%\bigl[
%\theta^{1-\alpha}
%\bigr],
\eeq
where, importantly, the bounds on the integrals in variable $\hat s$ rely on $\theta\le\frac12$.
Finally, combining the above bounds on $\mathcal{I}_1$, $\mathcal{I}_2$, and $\mathcal{I}_3$
with $r^m\lesssim
\mathcal{I}_1+\mathcal{I}_2+\mathcal{I}_3$, and
recalling that
$\theta=\frac12(\tau/t_m)^{1/r}$, one arrives at
\begin{align*}
r^m&{}\lesssim
%\mathcal{I}_1+\mathcal{I}_2+\mathcal{I}_3\lesssim
\tau^{\sigma-\alpha}(\tau/t_m)^{\alpha +1}+
 t_m^{\sigma-\alpha}\,
\bigr[(\tau/t_m)^{\sigma+1}
+
(\tau/t_m)^{(2-\alpha)/r}\bigr]
\\&{}\lesssim
\tau^{\sigma-\alpha}
\bigl[
(\tau/t_m)^{\alpha +1}+
(\tau/t_m)^{(2-\alpha)/r+\alpha-\sigma}
\bigr].
\end{align*}
As $(\tau/t_m)\le 1$, the obtained bound is equivalent to the desired assertion~\eqref{r_bound_simple}.
%\hfill$\square$
%\endproof
\end{proof}

\begin{remark}[bound \eqref{trunc_er_B} in view of Lemma~\ref{lem_r}]\label{rem_trunc}
The truncation error bound \eqref{trunc_er_B} (used in the proof of Lemma~\ref{lem_stability1})
 can be viewed as a particular case
of a more general bound obtained in the proof of Lemma~\ref{lem_r}.
To be more precise, \eqref{trunc_er_B} bounds the truncation error of $B(s\,;t_p)$ multiplied by $S_J$.
In view of \eqref{B_L1_def}, the bound \eqref{trunc_er_B} reduces to $S_J\,|r^m|$, where $r^m$ is as above,
only with the integrand on $(0,t_p)$ vanishing (as $B(s\,;t_p)$ is linear on this interval).
So we proceed with the bound on $r^m$ as before,
only now $\sigma:=-\beta=\alpha-1$, so $t_m^{\sigma-\alpha}=t_m^{-1}$,
and
 $|r^m |{}\lesssim \mathcal{I}_2+\mathcal{I}_3$ (with $\mathcal{I}_1{}$ now vanishing).
For $\mathcal{I}_2{}$ one again gets \eqref{I2_bound}, only with the lower limit $\tau/t_m$ replaced by $t_p/t_m$,
while for  $\mathcal{I}_3{}$ one gets \eqref{I3_bound}.
Combining these observations yields \eqref{trunc_er_B}.
\end{remark}

\subsection{Truncation error for the semilinear term}

Next, we estimate the truncation error associated with the discretization of $f(u)$ by $F(U^{m},U^{m-1})$.

\begin{lemma}[truncation error for $f$]\label{lem_r_f}
Under the conditions of Lemma~\ref{lem_r} and condition A1 on $F(\cdot,\cdot)$, one has
\beq\label{r_f}
\bigl|r^m_f\bigr|:=\bigl|F(u(t_m),\,u(t_{m-1}))-f(u(t_m))\bigr|
\lesssim
\bigl\{t_m^{\hat\sigma}\,(\tau/t_m)^{1/r}\bigr\}^q,\quad \hat\sigma:=\min\{\sigma,\,1\}.
\eeq
If $q=2$, then $|r^m_f|$ enjoys the same bound as $|r^m|$ in \eqref{r_bound_simple}.
\end{lemma}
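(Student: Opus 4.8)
The plan is to route everything through the consistency condition~A1, which converts the semilinear truncation error into a single-step increment of the exact solution. Setting $v:=u(t_m)$ and $w:=u(t_{m-1})$ (both of which lie in $\RR_u$), condition~A1 gives at once
\beq\label{Af_step}
\bigl|r^m_f\bigr|=\bigl|F(u(t_m),u(t_{m-1}))-f(u(t_m))\bigr|\le L\,\bigl|u(t_m)-u(t_{m-1})\bigr|^q .
\eeq
Thus the first assertion~\eqref{r_f} reduces entirely to the increment bound $|u(t_m)-u(t_{m-1})|\lesssim t_m^{\hat\sigma}(\tau/t_m)^{1/r}$, after which we raise to the power $q$.

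To establish this increment bound I would split into $m=1$ and $m\ge2$. For $m\ge2$ I would write $|u(t_m)-u(t_{m-1})|\le\tau_m\sup_{(t_{m-1},t_m)}|u'|$ and insert $|u'(s)|\lesssim1+s^{\sigma-1}$ from~\eqref{ass_u_ode}; using $s\simeq t_m$ and $\tau_m\lesssim\tau^{1/r}t_m^{1-1/r}$ (both consequences of~\eqref{t_grid_gen}, cf.\ Remark~\ref{rem_tau_j_1}), the supremum is $\lesssim 1+t_m^{\sigma-1}$. For $\sigma\in(0,1)$ the two contributions are $\tau^{1/r}t_m^{1-1/r}$ and $\tau^{1/r}t_m^{\sigma-1/r}$, and since $t_m^{1-\sigma}\lesssim1$ the former is absorbed into the latter, giving $\tau^{1/r}t_m^{\sigma-1/r}=t_m^{\hat\sigma}(\tau/t_m)^{1/r}$; for $\sigma\in(1,2)$ the supremum is simply $\lesssim1$ (so $\hat\sigma=1$), yielding $\lesssim\tau_m\lesssim t_m(\tau/t_m)^{1/r}=t_m^{\hat\sigma}(\tau/t_m)^{1/r}$. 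For $m=1$, integrating $|u'(s)|\lesssim1+s^{\sigma-1}$ over $(0,t_1)$ gives $|u(t_1)-u(0)|\lesssim t_1+t_1^{\sigma}\simeq t_1^{\hat\sigma}$ (this is exactly the estimate already recorded in the derivation of~\eqref{chi0}), and since $\tau/t_1=1$ this equals $t_1^{\hat\sigma}(\tau/t_1)^{1/r}$. Combining the cases and using~\eqref{Af_step} proves~\eqref{r_f}.

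For the final claim I would specialise to $q=2$, where~\eqref{r_f} reads $|r^m_f|\lesssim t_m^{2\hat\sigma}(\tau/t_m)^{2/r}$, and check directly that this is dominated by the L1 truncation bound $\tau^{\sigma-\alpha}(\tau/t_m)^{\gamma+1}$ of~\eqref{r_bound_simple}. Writing $\tau=(\tau/t_m)\,t_m$ and cancelling powers, the inequality reduces to $t_m^{e_1}(\tau/t_m)^{e_2}\lesssim1$ with $e_1=2\hat\sigma-(\sigma-\alpha)$ and $e_2=\tfrac2r-(\sigma-\alpha+\gamma+1)$; since $t_m\lesssim1$ and $\tau/t_m\le1$, it suffices to verify $e_1,e_2\ge0$. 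Here $e_1$ equals $\sigma+\alpha$ for $\sigma\in(0,1)$ and $2-\sigma+\alpha$ for $\sigma\in(1,2)$, both positive, while $e_2\ge0$ is immediate from $\gamma\le\frac{2-\alpha}{r}+\alpha-\sigma-1\le\frac2r+\alpha-\sigma-1$ recorded in~\eqref{r_bound_simple}.

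The increment estimate and the exponent arithmetic are routine; the only point requiring genuine care is the first step $m=1$, where for $\sigma\in(0,1)$ the derivative $u'$ is singular and must be integrated rather than bounded pointwise. This, however, is precisely the phenomenon already handled in the proof of Lemma~\ref{lem_r} (the bound behind~\eqref{chi0}), so no new obstacle arises and the argument closes.
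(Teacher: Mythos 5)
Your proposal is correct and follows essentially the same route as the paper: condition A1 reduces $|r^m_f|$ to the increment $|u(t_m)-u(t_{m-1})|^q$, which is bounded via $|u'(s)|\lesssim 1+s^{\sigma-1}$ and $\tau_m/t_m\lesssim(\tau/t_m)^{1/r}$, and the $q=2$ domination is settled by exponent arithmetic. Your verification that $e_1,e_2\ge0$ is exactly equivalent to the paper's check of the monomial inequality at the two endpoints $t_m=\tau$ and $t_m=T\simeq1$, so there is no substantive difference.
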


\begin{proof}
(i)
In view of A1, one gets $|r^m_f|\le L |u(t_m)-u(t_{m-1})|^q$, so
$|r^m_f|^{1/q}\lesssim \int_{t_{m-1}}^{t_m} |u'(s)|\,ds$.
If $\sigma\in(1,2)$, by~\eqref{ass_u_ode}, $|u'(s)|\lesssim 1$ so $|r^m_f|^{1/q}\lesssim \tau_m$,
while if $\sigma\in(0,1)$, a calculation yields $|r^m_f|^{1/q}\lesssim t_m^{\sigma-1}\tau_m$.
Combining these two cases yields $|r^m_f|^{1/q}\lesssim t_m^{\hat\sigma-1}\tau_m$
Finally, recall from~\eqref{t_grid_gen} that $\tau_m/t_m\lesssim (\tau/t_m)^{1/r}$, so
$|r^m_f|^{1/q}\lesssim t_m^{\hat\sigma}\,(\tau/t_m)^{1/r}$, which yields the desired assertion~\eqref{r_f}.

(ii)
It remains to show that for $q=2$ the bound on $|r^m|$ from \eqref{r_bound_simple} dominates the bound on $|r^m_f|$ in~\eqref{r_f}, i.e.
\beq\label{r_dominates}
\tau^{\sigma-\alpha}(\tau/t_m)^{\gamma+1}\gtrsim t_m^{2\hat\sigma}\,(\tau/t_m)^{2/r}.
\eeq
As the latter can be rewritten as $\tau^{p_1}t_m^{p_2}\gtrsim 1$ with some $p_1,\,p_2\in \R$,
clearly, it suffices to check that \eqref{r_dominates} holds true for $t_m=\tau$ and $t_m=T\simeq 1$.
This, respectively, is equivalent to
$\tau^{\sigma-\alpha}\gtrsim \tau^{2\hat\sigma}$
and
$\tau^{\sigma-\alpha+\gamma+1}\gtrsim \tau^{2/r}$.
Both hold true, the first one due to $\sigma-\alpha<\sigma\le2\hat\sigma$,
while the second follows from
$\sigma-\alpha+\gamma+1<\sigma-\alpha+\frac{2-\alpha}r+\alpha-\sigma=\frac{2-\alpha}r<2/r$.
%~\hfill$\square$
\end{proof}

\begin{remark}[Case $q=1$]\label{rem_p_1}
Using an argument
similar to the one in part (ii) of the above proof, one can show that if $q=1$, then the bound on $|r^m|$ in \eqref{r_bound_simple} does not  dominate
$|r^m_f|\lesssim
t_m^{\hat\sigma}\,(\tau/t_m)^{1/r}\simeq \tau^{\hat\sigma}\,(\tau/t_m)^{1/r-\hat\sigma}$
(for which one would need $\sigma-\alpha\le \hat\sigma$
and $\sigma-\alpha+\gamma+1\le 1/r$, with the latter never satisfied).
On the other hand, \eqref{main_stab_new} readily provides a bound $\tau^{\hat\sigma}\,\U^j(\tau;\hat\gamma)$ on the possible additional error induced by $r^m_f$, with $\hat\gamma=1/r-\hat\sigma-1\le -\hat\sigma< 0$.
Hence, a calculation shows that  %$\min\{0,\,\gamma\}=0$, so
%$\U^j(\tau;\hat\gamma)=
%\ell_\gamma
%\tau t_j^{\alpha-1}$.
%Finally,
the additional error, corresponding to $r^m_f$, is bounded by
$$
%\ell_\gamma
%\tau^{\min\{\sigma,\,1\}}\,\tau t_j^{\alpha-1}
t_m^{\min\{\sigma,\,1\}+\alpha}\,(\tau/t_m)^{1/r}\simeq M^{-1}\, t_m^{\min\{\sigma,\,1\}+\alpha-1/r},
$$
where we used $\tau^{1/r}\simeq M^{-1}$
(in view of  \eqref{t_grid_gen}). Thus, in positive time one always gets only the first-order convergence
of this additional error component.
\end{remark}

\subsection{Pointwise-in-time error bound}

\begin{theorem}[pointwise-in-time error bound]\label{the_er_simplest}
Suppose that $u$ is a unique solution  of \eqref{simplest_a}, which satisfies \eqref{ass_u_ode} with some $\sigma \in (0,1) \cup (1,2)$,
while the numerical scheme \eqref{simplest_b} for \eqref{simplest_a}
satisfies
conditions A1 with $q=2$ and  A2 on $F(\cdot,\cdot)$, with some constants $\lambda_0,\lambda_1\ge 0$.
Additionally, suppose that
the temporal grid satisfies~\eqref{t_grid_gen} {\color{blue}{with some $ r\geq1$}} and
 $\lambda_0\tau_j^{\alpha}< \{\Gamma(2-\alpha)\}^{-1}$ $\forall\,j\ge1$.
Then there exists a unique solution  $\{U^m\}$ of \eqref{simplest_b}, and $\forall\,m\ge 1$
\beq\label{E_cal_m}
|u(t_m)-U^m|\lesssim {\mathcal E}^m\!:=\!
\left\{\!\!\begin{array}{ll}
M^{-\nu r}\,t_m^{\alpha-1}&\mbox{if~}1\le r<\frac{2-\alpha}{\nu},\\[0.2cm]
%M^{-r(1-\epsilon)}\,t_m^{\alpha-(1-\epsilon)}
 M^{-(2-\alpha)}\,t_m^{\alpha-1}\,\ell_0
&\mbox{if~}r=\frac{2-\alpha}{\nu}\color{blue}\ge1,\\[0.1cm]
M^{-(2-\alpha)}\,t_m^{\sigma-\frac{2-\alpha}{r}}%{\min\{0,\alpha-(2-\alpha)/r\}}
&\mbox{if~}r>\frac{2-\alpha}{\nu},
\end{array}\right.    %\quad
\nu:=1+\sigma-\alpha,
\eeq
where $\ell_0=\ell_0(t_m)=[1+\ln(t_m/t_1)]$.
%{\color{blue}while $\frac{2-\alpha}{\nu}>1$ if $\sigma\in(0,1)$ and $\frac{2-\alpha}{\nu}<1$ if $\sigma\in(1,2)$.
%$$
%\nu:=1+\sigma-\alpha.
%$$

Under the above assumptions, only with $q=1$ in A1,
there exists a unique solution  $\{U^m\}$ of \eqref{simplest_b}, and
$|u(t_m)-U^m|\lesssim\widetilde{\mathcal E}^m := {\mathcal E}^m+M^{-1}\, t_m^{\min\{\sigma,\,1\}+\alpha-1/r}$
$\forall\,m\ge 1$.
\end{theorem}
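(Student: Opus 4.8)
The plan is to establish existence and uniqueness of $\{U^m\}$ first, then to recast the scheme as a single scalar error recursion and feed it into the stability result Theorem~\ref{theo_main_stab_semi}. For existence and uniqueness I would use the representation \eqref{delta_def_kappa} to rewrite \eqref{simplest_b} at level $m$ as $\kappa_{m,m}U^m+F(U^m,U^{m-1})=g(t_m)+\sum_{j<m}\kappa_{m,j}U^j$, whose right-hand side depends only on the already-computed $U^0,\dots,U^{m-1}$. By A2 the map $v\mapsto\kappa_{m,m}v+F(v,U^{m-1})$ is continuous and, since $F(\cdot,U^{m-1})+\lambda_0(\cdot)$ is non-decreasing, has slope at least $\kappa_{m,m}-\lambda_0$; the mesh restriction $\lambda_0\tau_m^\alpha<\{\Gamma(2-\alpha)\}^{-1}$ is precisely $\lambda_0<\kappa_{m,m}$, so this map is strictly increasing with range all of $\R$, giving a unique $U^m$. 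Induction on $m$ then yields the whole sequence.

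For the error, set $e^m:=u(t_m)-U^m$ (so $e^0=0$). Inserting the consistency identity $\delta_t^\alpha u(t_m)=\pt_t^\alpha u(t_m)+r^m$ together with the quantity $F(u(t_m),u(t_{m-1}))$ into \eqref{simplest_a}, and subtracting \eqref{simplest_b}, gives
\[
\delta_t^\alpha e^m+\bigl[F(u(t_m),u(t_{m-1}))-F(U^m,U^{m-1})\bigr]=r^m+r^m_f ,
\]
with $r^m$, $r^m_f$ as in Lemmas~\ref{lem_r} and~\ref{lem_r_f}. For $q=2$, Lemma~\ref{lem_r_f}(ii) shows $|r^m_f|$ is dominated by the bound \eqref{r_bound_simple} on $|r^m|$, so the right-hand side is $\lesssim\tau^{\sigma-\alpha}(\tau/t_m)^{\gamma+1}$ with $\gamma$ from \eqref{r_bound_simple}. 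I would split the nonlinear difference as $[F(u(t_m),u(t_{m-1}))-F(U^m,u(t_{m-1}))]+[F(U^m,u(t_{m-1}))-F(U^m,U^{m-1})]$: by A2 the first bracket plus $\lambda_0 e^m$ carries the sign of $e^m$, while the second is bounded by $\lambda_1|e^{m-1}|$, matching the operator $(\delta_t^\alpha-\lambda_0)(\cdot)-\lambda_1(\cdot)^{m-1}$ of Theorem~\ref{theo_main_stab_semi}. Feeding $C\tau^{\sigma-\alpha}(\tau/t_m)^{\gamma+1}$ into that theorem supplies a non-negative barrier $\mathcal{W}^m$ (namely \eqref{W_def}) with $\mathcal{W}^0=0$, $(\delta_t^\alpha-\lambda_0)\mathcal{W}^m-\lambda_1\mathcal{W}^{m-1}\ge C\tau^{\sigma-\alpha}(\tau/t_m)^{\gamma+1}$, and $\mathcal{W}^m\lesssim\tau^{\sigma-\alpha}\U^m(\tau;\gamma)$.

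The heart of the argument is then to prove $|e^m|\le\mathcal{W}^m$ by induction. Assuming $|e^j|\le\mathcal{W}^j$ for $j<m$, I would rewrite the error equation via \eqref{delta_def_kappa} as $(\kappa_{m,m}-\lambda_0)e^m+[\phi(u(t_m))-\phi(U^m)]=\sum_{j<m}\kappa_{m,j}e^j-B_m+r^m+r^m_f$, where $\phi(v):=F(v,u(t_{m-1}))+\lambda_0 v$ is non-decreasing and $B_m:=F(U^m,u(t_{m-1}))-F(U^m,U^{m-1})$; if $e^m>\mathcal{W}^m\ge0$, then $\phi(u(t_m))-\phi(U^m)\ge0$, and bounding $\sum_{j<m}\kappa_{m,j}e^j\le\sum_{j<m}\kappa_{m,j}\mathcal{W}^j$ (positivity of $\kappa_{m,j}$) and $-B_m\le\lambda_1\mathcal{W}^{m-1}$ against the barrier's own $\kappa$-identity forces $e^m\le\mathcal{W}^m$, a contradiction; the lower bound is symmetric. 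This is exactly the mechanism of Lemma~\ref{lem_comparison}, re-run against the constructed barrier. Finally I would simplify $\tau^{\sigma-\alpha}\U^m(\tau;\gamma)$ using $\tau\simeq M^{-r}$ and $\nu=1+\sigma-\alpha$: the sign of $\tfrac{2-\alpha}r-\nu$ decides whether $\gamma$ is positive, zero (the threshold $r=\tfrac{2-\alpha}\nu$, producing the logarithmic factor $\ell_0$), or negative, reproducing the three branches of \eqref{E_cal_m}. For $q=1$, where Lemma~\ref{lem_r_f} only gives $|r^m_f|\lesssim t_m^{\hat\sigma}(\tau/t_m)^{1/r}$, I would treat this residual by a second application of Theorem~\ref{theo_main_stab_semi} with $\hat\gamma=1/r-\hat\sigma-1<0$, as in Remark~\ref{rem_p_1}, adding $M^{-1}t_m^{\min\{\sigma,1\}+\alpha-1/r}$ to obtain $\widetilde{\mathcal{E}}^m$.

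The main obstacle is the nonlinearity: because A2 controls the first argument only one-sidedly, $e^m$ cannot be cast directly as a sequence satisfying the hypothesis of Theorem~\ref{theo_main_stab_semi}. The sign-by-sign induction above, which exploits the positivity of the coefficients $\kappa_{m,j}$ and compares against the explicit barrier rather than invoking the theorem as a black box, is what makes the one-sided condition enough; a secondary technical point is ensuring the numerical values remain in the range $\RR_u$ where the Lipschitz part of A2 applies, which the induction bound itself confirms.
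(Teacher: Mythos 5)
Your overall architecture matches the paper's: existence and uniqueness via the strictly increasing map $v\mapsto(\kappa_{m,m}-\lambda_0)v+\{F(v,U^{m-1})+\lambda_0 v\}$ using \eqref{delta_def_kappa} and the mesh restriction $\lambda_0<\kappa_{m,m}$; the error equation with the truncation errors of Lemmas~\ref{lem_r} and~\ref{lem_r_f}; control of the error by the stability result; and the case analysis recovering the three branches of \eqref{E_cal_m}, with the $q=1$ case handled via Remark~\ref{rem_p_1}. One structural remark: your explicit barrier induction (unfolding \eqref{W_def} and Lemma~\ref{lem_comparison}) is valid but unnecessary, and your stated motivation for it is a misdiagnosis --- the one-sidedness of A2 is \emph{not} an obstacle to invoking Theorem~\ref{theo_main_stab_semi} as a black box. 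The paper multiplies the error equation by $\varsigma^m={\rm sign}(e^m)$ and uses precisely the one-sided monotonicity plus the positivity of the $\kappa_{m,j}$ to show that the scalar sequence $|e^m|$ itself satisfies $(\delta_t^\alpha-\lambda_0)|e^m|-\lambda_1|e^{m-1}|\le|r^m|+|r^m_f|$, which is exactly the hypothesis of \eqref{main_stab_new}; your induction re-proves this mechanism by hand.

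There is, however, one genuine gap: your splitting of the nonlinear difference as $\bigl[F(u(t_m),u(t_{m-1}))-F(U^m,u(t_{m-1}))\bigr]+\bigl[F(U^m,u(t_{m-1}))-F(U^m,U^{m-1})\bigr]$ misapplies A2. The Lipschitz part of A2 is assumed only for first arguments in $\RR_u$, but in your second bracket (your $B_m$) the first argument is the \emph{numerical} value $U^m$, which is not known to lie in $\RR_u$. Your closing claim that ``the induction bound itself confirms'' membership in $\RR_u$ does not hold: $|e^m|\le\mathcal{W}^m$ only places $U^m$ in a small neighbourhood of $\RR_u$, and if $u(t_m)$ is an endpoint of the range of $u$, then $U^m$ may well lie outside $\RR_u$, where A2 gives no control. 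The fix is to transpose the split, as the paper does: estimate $\bigl[F(u^m,U^{m-1})-F(u^m,u^{m-1})\bigr]$ by $\lambda_1|e^{m-1}|$ (here the first argument is $u^m\in\RR_u$ and the base point of the second argument is $u^{m-1}\in\RR_u$, while the increment $\omega=U^{m-1}-u^{m-1}$ may be arbitrary, exactly as A2 permits), and handle $\bigl[F(U^m,U^{m-1})-F(u^m,U^{m-1})\bigr]$ by the one-sided monotonicity of $F(\cdot,w)+\lambda_0(\cdot)$, which A2 grants for \emph{all} $w\in\R$, in particular $w=U^{m-1}$. With this split, your induction (or the paper's sign argument) goes through verbatim; as written, your bound on $B_m$ is unjustified.
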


\begin{remark}[{\color{blue}$ {\mathcal E}^m$ for $\sigma\in(1,2)$}]\color{blue}
If $\sigma\in(1,2)$, then $\frac{2-\alpha}{\nu}%=\frac{1+(1-\alpha)}{\sigma+(1-\alpha)}
<1$, so \eqref{E_cal_m} reduces to the case $r\ge 1>\frac{2-\alpha}{\nu}$.
Thus, ${\mathcal E}^m=M^{-(2-\alpha)}\,t_m^{\sigma-\frac{2-\alpha}{r}}$ for any $r\ge 1$.
From this, one immediately gets ${\mathcal E}^m\simeq M^{-(2-\alpha)}$ in positive time $t_m\gtrsim 1$ (which agrees with Corollary~\ref{cor_t1_con} below,
since $\nu r\ge (2-\alpha)\cdot 1$ implies $\min\{\nu r,\,2-\alpha\}=2-\alpha$).
\end{remark}

\begin{corollary}[convergence in positive time]\label{cor_t1_con}
For $t_m\gtrsim 1$ the error bound in \eqref{E_cal_m} involves
${\mathcal E}^m\simeq M^{-\min\{\nu r,\,2-\alpha\}}$ if $r\neq\frac{2-\alpha}{\nu}$
and ${\mathcal E}^m\simeq M^{-(2-\alpha)}\ln M$ otherwise.
Thus, $r>\frac{2-\alpha}{\nu}$ yields the optimal convergence rate with  ${\mathcal E}^m\simeq M^{-(2-\alpha)}$ in positive time.
\end{corollary}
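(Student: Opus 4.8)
The plan is to read the corollary off directly from the three-case error bound \eqref{E_cal_m}, specialized to the regime $t_m\gtrsim 1$. Since $t_m\le T\simeq 1$ by assumption, the hypothesis $t_m\gtrsim 1$ forces $t_m\simeq 1$; consequently every power of $t_m$ appearing in \eqref{E_cal_m}, namely $t_m^{\alpha-1}$ and $t_m^{\sigma-(2-\alpha)/r}$, is $\simeq 1$ (with the generic constant in $\simeq$ absorbing the fixed exponents, which depend only on $\alpha$, $\sigma$, and $r$). Thus each branch of \eqref{E_cal_m} collapses to a pure power of $M$, possibly times a logarithm, and it remains only to identify that power with $M^{-\min\{\nu r,\,2-\alpha\}}$.

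First I would treat the two non-boundary branches. In the first branch $1\le r<\frac{2-\alpha}{\nu}$ one has $\nu r<2-\alpha$, so $\min\{\nu r,\,2-\alpha\}=\nu r$, and $t_m^{\alpha-1}\simeq 1$ gives ${\mathcal E}^m\simeq M^{-\nu r}=M^{-\min\{\nu r,\,2-\alpha\}}$. Symmetrically, in the third branch $r>\frac{2-\alpha}{\nu}$ one has $\nu r>2-\alpha$, so $\min\{\nu r,\,2-\alpha\}=2-\alpha$, and $t_m^{\sigma-(2-\alpha)/r}\simeq 1$ gives ${\mathcal E}^m\simeq M^{-(2-\alpha)}=M^{-\min\{\nu r,\,2-\alpha\}}$. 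These two cases together exhaust all $r\neq\frac{2-\alpha}{\nu}$ and yield the first asserted formula.

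The only step requiring more than a glance is the boundary branch $r=\frac{2-\alpha}{\nu}$, because of the logarithmic factor $\ell_0=1+\ln(t_m/t_1)$. Here I would invoke \eqref{t_grid_gen}, which gives $t_1\simeq\tau\simeq M^{-r}$, whence $\ln(t_m/t_1)\simeq\ln(M^{r})=r\ln M$; since $r$ is a fixed constant, $r\ln M\simeq\ln M$, so $\ell_0\simeq\ln M$. Combining this with $t_m^{\alpha-1}\simeq 1$ and $\nu r=2-\alpha$ yields ${\mathcal E}^m\simeq M^{-(2-\alpha)}\ln M$, which is precisely the ``otherwise'' case. Finally, collecting the three branches shows that the largest attainable exponent of $M$ is $2-\alpha$ (the maximal order appearing in \eqref{E_cal_m}, which is the classical best-possible order of the L1 scheme), and that it is achieved exactly when $r>\frac{2-\alpha}{\nu}$, giving the optimal rate ${\mathcal E}^m\simeq M^{-(2-\alpha)}$ in positive time. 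I expect no genuine obstacle; the one mild subtlety is the conversion of $\ln(t_m/t_1)$ into $\ln M$ via $t_1\simeq M^{-r}$, together with the bookkeeping that the fixed grading parameter $r$ is absorbed into the constant hidden in $\simeq$.
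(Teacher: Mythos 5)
Your proposal is correct and follows essentially the same route as the paper, which states this corollary as an immediate consequence of \eqref{E_cal_m}: one sets $t_m\simeq 1$ so that all fixed powers of $t_m$ are absorbed into the constants, and in the boundary case converts $\ell_0=1+\ln(t_m/t_1)$ into $\simeq\ln M$ via $t_1=\tau\simeq M^{-r}$ from \eqref{t_grid_gen}. Your handling of the three branches and of the logarithmic factor matches the intended argument exactly.
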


\begin{corollary}[global convergence]\label{cor_glob_con}
The global-in-time version of \eqref{E_cal_m} is
\beq\label{E_cal_m_global}
\max_{t_m\le T}|u(t_m)-U^m|\lesssim \max_{m\le M}{\mathcal E}^m\simeq M^{-\min\{\sigma r,\,2-\alpha\}}.
\eeq
Thus, $\color{blue}r\ge \max\bigl\{\frac{2-\alpha}{\sigma},\,1\bigr\}$ yields the optimal global convergence rate $M^{-(2-\alpha)}$.
\end{corollary}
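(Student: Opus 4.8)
The first inequality in \eqref{E_cal_m_global} is immediate: taking the maximum over $m\le M$ in the pointwise bound \eqref{E_cal_m} yields $\max_{t_m\le T}|u(t_m)-U^m|\lesssim\max_{m\le M}\mathcal{E}^m$. The substance of the corollary is therefore the equivalence $\max_{m\le M}\mathcal{E}^m\simeq M^{-\min\{\sigma r,\,2-\alpha\}}$, and the plan is to maximize each of the three piecewise expressions for $\mathcal{E}^m$ over the mesh range $t_m\in[t_1,T]$, where $t_1=\tau\simeq M^{-r}$ and $T\simeq1$ by \eqref{t_grid_gen}. Two algebraic facts would be used throughout: the identity $\nu+\alpha-1=\sigma$ (so that $\nu r=\sigma r+r(1-\alpha)$), and the inequality $\sigma<\nu$, which holds since $\nu-\sigma=1-\alpha>0$; the latter gives $\frac{2-\alpha}\nu<\frac{2-\alpha}\sigma$, so that the threshold $r=\frac{2-\alpha}\sigma$ at which $\sigma r$ overtakes $2-\alpha$ lies strictly inside the third regime $r>\frac{2-\alpha}\nu$.

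In the first two regimes ($1\le r\le\frac{2-\alpha}\nu$) the $t_m$-dependence is $t_m^{\alpha-1}$, which, having a negative exponent, attains its maximum at the left endpoint $t_m=t_1\simeq M^{-r}$. For $r<\frac{2-\alpha}\nu$ this gives $\max_m\mathcal{E}^m\simeq M^{-\nu r}(M^{-r})^{\alpha-1}=M^{-r(\nu+\alpha-1)}=M^{-\sigma r}$, and since $\sigma<\nu$ forces $\sigma r<\nu r<2-\alpha$, this equals $M^{-\min\{\sigma r,\,2-\alpha\}}$. For the critical grading $r=\frac{2-\alpha}\nu$, writing $x:=t_m/t_1\ge1$ I would observe that $t_m^{\alpha-1}\ell_0\simeq\tau^{\alpha-1}\,x^{\alpha-1}(1+\ln x)$, and that $x^{\alpha-1}(1+\ln x)$ is maximized over $x\ge1$ at a constant $x\simeq1$ where it is $O(1)$; hence the logarithm contributes only a bounded factor, and $\max_m\mathcal{E}^m\simeq M^{-(2-\alpha)}\tau^{\alpha-1}=M^{-(2-\alpha)+r(1-\alpha)}=M^{-\sigma r}$, which is again $M^{-\min\{\sigma r,\,2-\alpha\}}$ because $r\nu=2-\alpha$ forces $\sigma r=(2-\alpha)-r(1-\alpha)<2-\alpha$.

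In the third regime $r>\frac{2-\alpha}\nu$ the $t_m$-dependence is $t_m^{\sigma-(2-\alpha)/r}$, and the maximizing endpoint is governed by the sign of the exponent $\sigma-\frac{2-\alpha}r$, which vanishes precisely at $r=\frac{2-\alpha}\sigma$. When $r>\frac{2-\alpha}\sigma$ the exponent is positive, the maximum sits at $t_m=T\simeq1$, giving $\max_m\mathcal{E}^m\simeq M^{-(2-\alpha)}$, consistent with $\sigma r>2-\alpha$; when $\frac{2-\alpha}\nu<r<\frac{2-\alpha}\sigma$ the exponent is negative, the maximum sits at $t_m=t_1\simeq M^{-r}$, and $\max_m\mathcal{E}^m\simeq M^{-(2-\alpha)}(M^{-r})^{\sigma-(2-\alpha)/r}=M^{-\sigma r}$, consistent with $\sigma r<2-\alpha$; at $r=\frac{2-\alpha}\sigma$ the power is $t_m^{0}\simeq1$ and $\sigma r=2-\alpha$, so both descriptions agree. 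Collecting the three regimes yields $\max_m\mathcal{E}^m\simeq M^{-\min\{\sigma r,\,2-\alpha\}}$, and the final claim follows at once: $r\ge\frac{2-\alpha}\sigma$ forces $\sigma r\ge2-\alpha$, whence $\min\{\sigma r,\,2-\alpha\}=2-\alpha$ and the optimal rate $M^{-(2-\alpha)}$ is attained. The only genuinely delicate step is the bookkeeping of which endpoint realizes each maximum, together with the verification that the crossover $r=\frac{2-\alpha}\sigma$ falls inside the third regime; the logarithmic factor at $r=\frac{2-\alpha}\nu$ superficially threatens to survive into the global bound but is in fact harmless, since its maximizer sits at $t_m\simeq t_1$ where $\ell_0\simeq1$.
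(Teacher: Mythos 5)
Your proposal is correct and is exactly the computation the paper leaves implicit: maximizing each branch of ${\mathcal E}^m$ in \eqref{E_cal_m} over $t_m\in[t_1,T]$ with $t_1\simeq M^{-r}$, using $\nu+\alpha-1=\sigma$ and $\sigma<\nu$ to locate the crossover $r=\frac{2-\alpha}{\sigma}$ inside the third regime. Your endpoint bookkeeping, including the observation that at the critical grading $r=\frac{2-\alpha}{\nu}$ the factor $x^{\alpha-1}(1+\ln x)$ is maximized at $x\simeq 1$ so the logarithm does not survive into the global bound (consistent with the paper's log-free rate $M^{-\sigma r}$ there, even though the pointwise bound in positive time carries $\ln M$), matches the intended argument.
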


\begin{proof}
\color{blue}
If $\sigma r\ge 2-\alpha$, one gets the case $r>\frac{2-\alpha}{\nu}$ in \eqref{E_cal_m},
so ${\mathcal E}^m\lesssim M^{-(2-\alpha)}$ $\forall\, m\ge 1$.
Otherwise,  ${\mathcal E}^m$ involves a negative power of $t_m$, so ${\mathcal E}^m\lesssim{\mathcal E}^1$,
and a calculation (using $t_1\simeq M^{-r}$) shows that ${\mathcal E}^1\lesssim M^{-\sigma r}$ in each of the three cases in \eqref{E_cal_m}.
\end{proof}

\begin{remark}[earlier results as particular cases of \eqref{E_cal_m}]\label{rem_earlier_sigma}
The error bound~\eqref{E_cal_m} includes the following earlier results as particular cases:
for $\sigma = \alpha$ and $r \geqslant 1$ it is obtained using similar barrier-function techniques in \cite{Kopteva_Meng,kopteva_semilin}, while for
uniform temporal meshes (i.e. for $r=1$)  it is established using discrete Gr\"{o}nwall inequalities in
\cite{DLi1}.
The global bound \eqref{E_cal_m_global} for the particular case $\sigma=2\alpha$ is found in \cite[Theorem 4]{Gracia_2alpha}
for a fitted version of the L1 method (in which the error induced by $t^\alpha$ vanishes).
\end{remark}

\begin{proof}[Proof of Theorem~\ref{the_er_simplest}]
Recall the representation \eqref{delta_def_kappa} for the operator $\delta^\alpha_t$.
Note that $\lambda_0<\kappa_{m,m}$ $\forall\,m\ge1$.
Hence, given $\{U^j\}_{j<m}$, equation \eqref{simplest_b} is a nonlinear equation for $U^m$ of type
\beq\label{GUm}
G(U^m):=(\kappa_{m,m}-\lambda_0)U^m +\bigl\{F(U^m,w)+\lambda_0U^m\bigr\}=S,
\eeq
where $w= U^{m-1}$ and $S=\sum_{j=0}^{m-1}\kappa_{m,j}U^j+g(t_m)$ are in $\R$.
%For $\bar v:=(\kappa_{m,m}-\lambda_0)^{-1}(|F(0,w)|+|S|)\ge 0$
%one gets $F(\bar v,w)+\lambda_0\bar v\ge F(0,w)$ (as $F(v,w)+\lambda_0v$ is non-decreasing in $v$ by A2)
%$G(\bar v)\ge(|F(0,w)|+|S|)+F(0,w)\ge S$. Similarly, $G(- \bar v)<S$. Hence, by the intermediate value theorem, there exists a unique $U^m\in[-\bar v, \bar v]$
%(the uniqueness following from $G$ increasing)...
%\\
In view of A2, the function $G$ is continuous and strictly increasing, with $\lim_{v\rightarrow\pm\infty}G(v)=\pm\infty$, so the above equation has a unique solution $U^m\in\R$.

Next, consider the error $e^m:=U^m-u^m$, where we use the notation $u^m:=u(t_m)$ for the exact solution values at grid points.
The associated truncation errors $r^m=\delta_{t}^{\alpha} u^m-\pt_t^\alpha u(t_m)$ and $r_f^m=F(u^m,u^{m-1})-f(u^m)$ are
respectively estimated in Lemmas~\ref{lem_r} and~\ref{lem_r_f}.
Combining those with \eqref{simplest_a}, one gets
$\delta_t^\alpha u^m+F(u^m,u^{m-1})=r^m+r^{m}_f$.
Subtracting the latter from \eqref{simplest_b} yields
$e^0=0$ and
\beq\label{error_eq_simple}
\delta_t^\alpha e^m+F(u^m+e^m, U^{m-1})-F(u^m,u^{m-1})=-r^m-r^m_f\quad \forall\,m\ge 1.
\eeq
Multiply this equation by $\varsigma^m:={\rm sign}(e^m)$, equal to $1$ if $e^m\ge 1$ and $-1$ otherwise,
%(with the convention that ${\rm sign}(0)=1$).
and note that $\varsigma^me^m=|e^m|$ so
\begin{subequations}\label{error_lead}
\begin{align}
\varsigma^m(\delta_t^\alpha e^m)\ge\kappa_{m,m}|e^m|-\sum_{j=0}^{m-1}\underbrace{\kappa_{m,j}}_{>0} |e^{j}|&= \delta_t^\alpha |e^m|,
\\
\varsigma^m\bigl[F(u^m+e^m, U^{m-1})-F(u^m,U^{m-1})\bigr]&\ge -\lambda_0 |e^m|,%-\lambda_1|e^{m-1}|,
\\
\intertext{while}
\bigl|F(u^m, U^{m-1})-F(u^m, u^{m-1})\bigr|&\le \lambda_1 |e^{m-1}|,
\end{align}
\end{subequations}
where we used \eqref{delta_def} and A2 (with the second relation following from the monotonicity of $F(v, U^{m-1})+\lambda_0 v$ in $v$, in view of A2).
%Condition A1 also yields  $|F(u^m, U^{m-1})-F(u^m, u^{m-1})|\le \lambda_1|e^{m-1}|$
Thus, \eqref{error_eq_simple} multiplied by $\varsigma^m$ yields
\beq\label{error_ineq}
%e^0=0,\qquad\qquad
(\delta_t^\alpha -\lambda_0)\, |e^m|-\lambda_1|e^{m-1}|\le |r^m|+|r^m_f|\qquad \forall\,m\ge 1.
\eeq

To complete the proof, it remains to recall the bounds of Lemmas~\ref{lem_r} and~\ref{lem_r_f} respectively for $r^m$ and $r^m_f$, and then apply Theorem~\ref{theo_main_stab_semi} to get  \eqref{E_cal_m}.
The cases of A1 with $q=2$ and $q=1$  will be considered separately.

First, consider A1 with $q=2$,
when $|r^m|+|r^m_f|\lesssim \tau^{\sigma - \alpha} (\tau/t_m)^{\gamma+1}$,
with
$\gamma:=\min\bigl\{\alpha,\,\,\frac{2-\alpha}{r}-\nu\bigr\}$.
So Theorem~\ref{theo_main_stab_semi} yields
$|e^j|\lesssim
\widetilde\U^j
:=\tau^{\sigma - \alpha}\,\U^j$, i.e.
$\widetilde\U^j:=
\ell_\gamma\tau^\nu t_j^{\alpha-1}(\tau/t_j)^{\min\{0,\,\gamma\}}$, and it suffices to show that $\widetilde\U^j\simeq {\mathcal E}^j$.
As $\alpha>0$, one has $\min\{0,\,\gamma\}=\min\{0,\,\frac{2-\alpha}{r}-\nu\}$, and
$\ell_\gamma\neq 1$ iff $r\neq\frac{2-\alpha}{\nu} $ (which is equivalent to $\gamma\neq 0$).
Now, $r<\frac{2-\alpha}{\nu}$
%is equivalent to $\gamma>0$, so
yields
$\min\{0,\,\gamma\}=0$, so $\widetilde\U^j\simeq \tau^\nu t_j^{\alpha-1}\simeq M^{-\nu r}t_j^{\alpha-1}\simeq {\mathcal E}^j$.
Similarly,  $r=\frac{2-\alpha}{\nu}$ yields $\widetilde\U^j%\simeq \ell_0\tau t_j^{\alpha-1}
\simeq \ell_0 M^{-\nu r}t_j^{\alpha-1}\simeq {\mathcal E}^j$.
Finally,  $r>\frac{2-\alpha}{\nu}$ yields
 $\min\{0,\,\gamma\}=\frac{2-\alpha}{r}-\nu<0$, so
 $\widetilde\U^j\simeq \tau^\nu t_j^{\alpha-1}(\tau/t_j)^{\frac{2-\alpha}{r}-\nu}\simeq \tau^{\frac{2-\alpha}{r}}t_j^{\sigma-\frac{2-\alpha}{r}}$
so again $\widetilde\U^j\simeq M^{-(2-\alpha)} t_j^{\sigma-\frac{2-\alpha}{r}}\simeq {\mathcal E}^j$.

For the case $q=1$ in A1, the error induced by  $r^m$ is estimated similarly and remains $\lesssim {\mathcal E}^m$, while  the error induced by $r^m_f$
is estimated as described in Remark~\ref{rem_p_1}.
%~\hfill$\square$
\end{proof}

\begin{remark}[{\color{blue}case $f=f(t,u)$}]\label{rem_ftu}
\color{blue}
The above analysis easily generalizes for
 a more general semilinear term $f(t,u)$ and its discretization $F(t_m,U^m,U^{m-1})$ in \eqref{simplest},
 as long as
 $F(t,\cdot,\cdot)$ satisfies
 assumptions A1 and A2 for each fixed $t\in[0,T]$ uniformly in $t$ (i.e. with $t$-independent constants $L$, $\lambda_0$, and $\lambda_1$).
 Indeed, then Lemma 3.3 and Theorem 3.5 remain valid with minimal changes in the proofs.
\end{remark}

\section{Error analysis for semilinear time-fractional parabolic equations}\label{sec_semi}
In this section we shall consider discretizations of type \eqref{semi_semidiscr_method} for the semilinear problem~\eqref{problem}.
It will be assumed throughout the section that the exact solution satisfies
\eqref{ass_u_pde}, i.e. that
$\|\partial_t^l u (\cdot, t)\|_{L_p(\Omega)}\lesssim 1+t^{\sigma-l}$ for $l = 0,1,2$ and $t\in(0,T]$, where $p=2$ or $p=\infty$,
with some $\sigma \in (0,1) \cup (1,2)$.
Our purpose is to establish pointwise-in-time bounds of type~\eqref{E_cal_m} in the spatial norms $L_2(\Omega)$ and $L_\infty(\Omega)$.
Both semidiscretizations in time and  full discretizations, using finite differences and finite elements, will be addressed,
with extensions  to full discretizations carried out along the lines of \cite[\S\S6-7]{kopteva_semilin}.

\subsection{Semidiscretization in time}

\begin{theorem}\label{theo_semidiscr}
Given $p\in\{2,\infty\}$, suppose that $u$ is a unique solution  of \eqref{problem},\eqref{LL_def} with the initial condition $u_0\in L_\infty(\Omega)$,
which satisfies
$u(\cdot,t)\in H^1_0(\Omega)$ for $t\in(0,T]$ and
\eqref{ass_u_pde}
with some $\sigma \in (0,1) \cup (1,2)$.
Let the numerical scheme \eqref{semi_semidiscr_method} for \eqref{problem}
satisfy
conditions A1 with $q=2$ and  A2 on $F(\cdot,\cdot)$, with some constants $\lambda_0,\lambda_1\ge 0$.
Also, let
the temporal grid satisfy~\eqref{t_grid_gen} and
 $\lambda_0\tau_j^{\alpha}< \{\Gamma(2-\alpha)\}^{-1}$ $\forall\,j\ge1$.
Then there exists a unique solution  $\{U^m\}$ of \eqref{semi_semidiscr_method},
 with $U^m\in H^1_0(\Omega)\cap L_\infty(\Omega)$ %$\forall m\ge 1$
and
\beq\label{L1_semi_error}
\|u(\cdot,t_m)-U^m\|_{L_p(\Omega)}\lesssim {\mathcal E}^m
\qquad\forall\,m=1,\ldots,M, %\qquad p\in\{2,\infty\},
\eeq
where ${\mathcal E}^m$ is from \eqref{E_cal_m}.

Under the above assumptions, only with $q=1$ in A1, the error bound \eqref{L1_semi_error} is replaced by
$\|u(\cdot,t_m)-U^m\|_{L_p(\Omega)}\lesssim\widetilde{\mathcal E}^m={\mathcal E}^m+M^{-1}\, t_m^{\min\{\sigma,\,1\}+\alpha-1/r}$
$\forall\,m\ge 1$.
\end{theorem}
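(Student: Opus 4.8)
The plan is to lift the scalar argument of Theorem~\ref{the_er_simplest} to the spatial setting by working with spatial norms and invoking the comparison/stability machinery already developed. First I would establish existence and uniqueness of $U^m\in H^1_0(\Omega)\cap L_\infty(\Omega)$: given $U^{m-1}$, equation \eqref{semi_semidiscr_method} reads $(\kappa_{m,m}-\lambda_0)U^m+\LL U^m+\{F(U^m,U^{m-1})+\lambda_0 U^m\}=S^m$ in $\Omega$ with homogeneous Dirichlet data, where $S^m$ collects the known history terms. Since $\kappa_{m,m}-\lambda_0>0$ (as $\lambda_0\tau_m^\alpha<\{\Gamma(2-\alpha)\}^{-1}=\kappa_{m,m}\tau_m^\alpha$) and $\LL$ is a coercive elliptic operator under the stated sign conditions on $c$ and $c-\frac12\sum_k\pt_{x_k}b_k$, the left-hand operator is strongly monotone and continuous thanks to the one-sided Lipschitz condition A2 (which makes $F(\cdot,w)+\lambda_0(\cdot)$ nondecreasing); a standard monotone-operator argument then yields a unique weak solution, with boundedness following from a maximum-principle/barrier argument as in \cite{kopteva_semilin}.

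Next I would derive the error inequality. Writing $e^m:=U^m-u(\cdot,t_m)$ and using the truncation errors $r^m$ and $r^m_f$ from Lemmas~\ref{lem_r} and~\ref{lem_r_f} (now interpreted pointwise in $x$, with the bounds holding after taking $\|\cdot\|_{L_p(\Omega)}$ via assumption \eqref{ass_u_pde}), subtracting the consistency identity from \eqref{semi_semidiscr_method} gives an equation of the form \eqref{error_eq_simple} with the extra term $\LL e^m$. The key step is to test this error equation appropriately so as to reproduce the scalar inequality \eqref{error_ineq}. For $p=2$ I would pair with $e^m$ and use coercivity of $\LL$ to discard the elliptic term (it contributes a nonnegative quantity), together with A2 to control the $F$-difference, obtaining $(\delta_t^\alpha-\lambda_0)\|e^m\|_{L_2}-\lambda_1\|e^{m-1}\|_{L_2}\le \|r^m\|_{L_2}+\|r^m_f\|_{L_2}$, where I rely on the fact that $\delta_t^\alpha$ has nonnegative off-diagonal structure \eqref{delta_def_kappa} so that $\langle \delta_t^\alpha e^m,e^m\rangle\ge \|e^m\|_{L_2}\,\delta_t^\alpha\|e^m\|_{L_2}$. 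For $p=\infty$ I would instead invoke the discrete maximum principle for the operator $\delta_t^\alpha+\LL$ (valid under the sign conditions on the coefficients), comparing $\pm e^m$ against the scalar barrier $\widetilde{\mathcal U}^j$ driven by the $L_\infty$ truncation bounds, exactly along the lines of \cite[\S6]{kopteva_semilin}.

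Once the scalar-type inequality \eqref{error_ineq} is in force for $\|e^m\|_{L_p}$, the conclusion follows verbatim from the computation in the proof of Theorem~\ref{the_er_simplest}: apply Theorem~\ref{theo_main_stab_semi} with $\gamma=\min\{\alpha,\frac{2-\alpha}{r}-\nu\}$ for the $q=2$ case to obtain $\|e^m\|_{L_p}\lesssim\widetilde{\mathcal U}^m\simeq{\mathcal E}^m$, and handle $q=1$ by the additional application of \eqref{main_stab_new} with $\hat\gamma=1/r-\hat\sigma-1$ described in Remark~\ref{rem_p_1}. I expect the main obstacle to be the $p=\infty$ estimate: unlike the scalar case, discarding the spatial operator is no longer a matter of coercivity, so one must verify that $\LL$ together with $\delta_t^\alpha$ admits a discrete maximum principle and that the scalar barrier functions remain valid supersolutions when the elliptic term is present. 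This is precisely the delicate point addressed by the full-discretization framework of \cite[\S\S6--7]{kopteva_semilin}, to which the theorem statement defers, and the coefficient sign conditions $c\ge0$, $c-\frac12\sum_k\pt_{x_k}b_k\ge0$ are exactly what make it go through.
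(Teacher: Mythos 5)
Your proposal is correct and, for the existence step, the $p=2$ case, the treatment of the truncation errors, and the final application of Theorem~\ref{theo_main_stab_semi} (including the $q=1$ case via Remark~\ref{rem_p_1}), it matches the paper's proof essentially step for step. The one place you genuinely diverge is the $p=\infty$ case: you propose a space-time discrete maximum principle for $\delta_t^\alpha+\LL$, comparing $\pm e^m$ against scalar barriers, whereas the paper uses a cleaner two-stage reduction: multiply the error equation \eqref{err_eq_semi} by $\varsigma^m={\rm sign}(e^m)$ and use \eqref{error_lead} to obtain, at each fixed time level, a purely elliptic inequality $\varsigma^m(\LL+\kappa_{m,m})e^m\le\mu^m$; then the weak maximum principle for the elliptic operator alone (Remark~\ref{rem_weak_max}) gives $\kappa_{m,m}\|e^m\|_{L_\infty(\Omega)}\le\mu^m$, which is exactly the scalar inequality \eqref{error_ineq_semi}, so the already-proved scalar stability theorem applies verbatim to $\|e^m\|_{L_\infty(\Omega)}$. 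Your route can be made to work --- the barriers $\mathcal{W}^j$ in the proof of Theorem~\ref{theo_main_stab_semi} are spatially constant and nonnegative, so $c\ge0$ keeps them supersolutions of the combined operator --- but it is heavier than you suggest: since the term $F(u^m+e^m,U^{m-1})-F(u^m,u^{m-1})$ is nonlinear in $e^m$, you cannot write an equation for $e^m\mp C\mathcal{W}^m$, so you would need to establish a space-time comparison principle for sequences in $H^1_0(\Omega)\cap L_\infty(\Omega)$ that interleaves the one-sided Lipschitz condition A2 with the weak maximum principle (essentially merging Lemma~\ref{lem_comparison} with Remark~\ref{rem_weak_max}) and then redo the barrier construction in that setting. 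The paper's per-time-level reduction avoids all of this and buys modularity: the stability result is proved once, in scalar form, and is then reused unchanged for both norms (and again for the quasilinear problem); your single-comparison approach is conceptually unified but duplicates the stability analysis inside the PDE argument. Also, for existence you sketch a monotone-operator argument plus a barrier for $L_\infty$-boundedness, whereas the paper simply invokes \cite[Lemma~2.1(i)]{kopteva_semilin} by induction; your sketch is plausible but would need the $L_\infty$ bound spelled out, since A2 alone gives only one-sided control of $F$ in its first argument.
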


\begin{proof}
We combine the proof of Theorem~\ref{the_er_simplest} with the proof of \cite[Theorem~5.1]{kopteva_semilin}.

The existence of a unique solution $U^m\in H^1_0(\Omega)\cap L_\infty(\Omega)$ can be shown by induction using \cite[Lemma~2.1(i)]{kopteva_semilin}.
Indeed, assuming that there exist desired $\{U^j\}_{j<m}$, for $U^m$ one gets an elliptic-equation version
$\LL U^m+G(x,U^m)=S$
of \eqref{GUm},
where the same definition of $\color{blue}G=G(x,U^m)$ involves $w= U^{m-1}(x)\in L_\infty(\Omega)$, while $S=\sum_{j=0}^{m-1}\kappa_{m,j}U^j\in L_\infty(\Omega)$.
To apply \cite[Lemma~2.1(i)]{kopteva_semilin}
{\color{blue}(which relies on \cite[Lemma~1]{Demlow_Kopteva} and \cite[Lemma~16]{Brezis_Strauss})}
to this equation,
note that A2 yields the one-sided Lipschitz condition on $F$ in $v$ with the constant $\lambda_0$,
the continuity of  $F(v,w)$ in $v$ {\color{blue}(so $G(x,v)$ is continuous in $v$),}  as well as
$F(v,U^{m-1}(\cdot))\in L_\infty(\Omega)$ $\forall\,v\in\R$
{\color{blue}(so $G(\cdot,v)\in L_\infty(\Omega)$ $\forall\,v\in\R$)}.

It remains to establish~\eqref{L1_semi_error}, for which it suffices to get a version of \eqref{error_ineq}, in which each
term of type $|\cdot|$ is replaced by the corresponding
 $\|\cdot\|_{L_p(\Omega)}$:
\beq\label{error_ineq_semi}
%e^0=0,\qquad\qquad
(\delta_t^\alpha -\lambda_0)\, \|e^m\|_{L_p(\Omega)}-\lambda_1\|e^{m-1}\|_{L_p(\Omega)}\le \|r^m\|_{L_p(\Omega)}+\|r^m_f\|_{L_p(\Omega)}\quad \forall\,m\ge 1.
\eeq
Indeed, once \eqref{error_ineq_semi} is true, using
%{\color{magenta}$\|\partial_t^l u (\cdot, t)\|_{L_p(\Omega)}\lesssim 1+t^{\alpha-l}$ for $l = 0,1,2$ and $t\in(0,T]$}
\eqref{ass_u_pde},
one gets bounds of type \eqref{r_bound_simple} and \eqref{r_f} %[Lemmas~\ref{lem_r} and~\ref{lem_r_f}]
on $\|r^m\|_{L_p(\Omega)}$ and $\|r^m_f\|_{L_p(\Omega)}$,
and then applies Theorem~\ref{theo_main_stab_semi} to get  \eqref{L1_semi_error}
exactly as in the proof of Theorem~\ref{the_er_simplest}.

If $p=2$, one easily gets \eqref{error_ineq_semi} by taking the $L_2(\Omega)$ inner product
 (denoted $\langle\cdot,\cdot\rangle$) of $e^m$
with the following version of \eqref{error_eq_simple}:
\beq\label{err_eq_semi}
\delta_t^\alpha e^m +\LL e^m+F(u^m+e^m, U^{m-1})-F(u^m,u^{m-1})=-r^m-r^m_f\quad\;\; \forall\,m\ge 1.
\eeq
Then, \eqref{error_ineq_semi}, indeed, follows from
$\langle\LL e^m,e^m\rangle\ge0$ (in view of $c-\frac12\sum_{k=1}^d\pt_{x_k}\!b_k\ge 0$) combined with
 the following version of \eqref{error_lead} (in which $\|\cdot\|=\|\cdot\|_{L_2(\Omega)}$):
\begin{align*}
\bigl\langle\delta_t^\alpha e^m,\,e^m\bigr\rangle \ge\kappa_{m,m}\|e^m\|^2-\sum_{j=0}^{m-1}\underbrace{\kappa_{m,j}}_{>0} \|e^{j}\|\,\|e^m\|&=
\bigl(\delta_t^\alpha \|e^m\|\bigr)\,\|e^m\|,
\\
\bigl\langle F(u^m+e^m, U^{m-1})-F(u^m,U^{m-1}),\,e^m\bigr\rangle&\ge -\lambda_0 \|e^m\|^2,%-\lambda_1|e^{m-1}|,
\\[0.3cm]
\bigl|\bigl\langle F(u^m, U^{m-1})-F(u^m, u^{m-1}),\,e^m\bigr\rangle\bigr|&\le \lambda_1 \|e^{m-1}\|\,\|e^m\|.
\end{align*}

Finally, if $p=\infty$, one gets \eqref{error_ineq_semi}
by multiplying \eqref{err_eq_semi}
with
$\varsigma^m:={\rm sign}(e^m)\in L_\infty(\Omega)$
and then employing \eqref{error_lead}, which yields an elliptic-equation version of
\eqref{error_ineq} (in which $\|\cdot\|=\|\cdot\|_{L_\infty(\Omega)}$):
\beq\label{error_ineq}
\varsigma^m\,(\LL+\kappa_{m,m})e^m
\le \mu^m:=
\!\sum_{j=0}^{m-1}\!\kappa_{m,j} \|e^{j}\|
+\lambda_0 \|e^m\|
+\lambda_1\|e^{m-1}\|+ \|r^m\|+\|r^m_f\|.\!
%\qquad \forall\,m\ge 1.
\eeq
The above equation holds $\forall\,x\in\Omega$ (and $\forall\,m\ge 1$). If $\|e^m\|=|e^m(x^*)|=(\varsigma^m\, e^m)(x^*)$ for some $x^*\in\Omega$, and $\LL e^m(x^*)$ is defined
in the classical sense, then $(\varsigma^m\,\LL e^m)(x^*)\ge 0$, so \eqref{error_ineq} implies $\kappa_{m,m}\|e^m\|\le \mu^m$, which immediately implies
the desired \eqref{error_ineq_semi}
for $p=\infty$.
The same result also follows from \eqref{error_ineq}
for less smooth $e^m\in H^1_0(\Omega)\cap L_\infty(\Omega)$
(note that $\LL e^m \in L_\infty(\Omega)$),
as described in Remark~\ref{rem_weak_max}.
%\hfill$\square$
\end{proof}

\begin{remark}[weak maximum principles for functions in $H^1_0(\Omega)$]\label{rem_weak_max}
For weak maximum principles for functions in $H^1_0(\Omega)$, we refer the reader to
\cite[\S8.1]{GTru}.
For our purposes, the following result should be applied to \eqref{error_ineq}:\\
If
%the operator $\LL$ is of type~\eqref{LL_def},
$v\in H_0^1(\Omega)\cap L_\infty(\Omega)$ and  $\LL v\in L_\infty(\Omega)$, while $\varsigma:={\rm sign}(v)$,
then, with any positive constants $\kappa$ and $\mu$,
$$
%{\rm sign}(v)
\varsigma\, \bigl(\LL+\kappa\bigr)v\le \mu\quad\Rightarrow\quad \kappa\,\|v\|_{L_\infty}\le \mu.
$$
Indeed, consider $w:=( v-\mu/\kappa)^+=\max\{ v-\mu/\kappa\,,0\}$, which is in $H_0^1(\Omega)$ \cite[Lemma~7.6]{GTru}.
Assuming this function has non-empty support $\Omega^+:=\{\kappa v>\mu\}\subseteq\Omega$,
one gets $(\LL+\kappa)[w+\mu/\kappa]=(\LL+\kappa)v\le \mu$ in $\Omega^+$. Combining this with $(\LL+\kappa) [\mu/\kappa]=(\mu/\kappa)\,\LL[1]+\mu \ge \mu$ yields
$(\LL+\kappa)w\le 0$ in $\Omega^+$.
Taking the inner $L_2(\Omega^+)$ product of the latter bound with $w\ge 0$, and noting that
 $\int_{\Omega^+}\!w\,\LL w\ge 0$ (for any $w\in H^1_0(\Omega^+)$, one gets $\int_{\Omega^+}\!\kappa w^2\le 0$. This implies that $\Omega^+$ is empty, so $\kappa v\le \mu$ in $\Omega$.
Similarly,
rewriting  $\text{sign}(v)\, (\LL+\kappa)v\le \mu$ as
$\text{sign}(-v)\, (\LL+\kappa)[-v]\le \mu$ yields $\kappa(-v)\le \mu$.
The desired assertion $\kappa\|v\|_{L_\infty(\Omega)}\le \mu$ follows.
\end{remark}

\begin{remark}[{\color{blue}case $f=f(x,t,u)$}]\label{rem_fxtu}
\color{blue}
Similarly to Remark~\ref{rem_ftu}, the above analysis extends to a more general $f=f(x,t,u)$ in \eqref{problem} and
its discretization
$F=F(x,t_m,U^m,U^{m-1})$ in \eqref{semi_semidiscr_method}.
For this case, we assume that
$F(x,t,v,w)$ satisfies
 assumptions A1 and A2 uniformly in $x\in\Omega$ and $t>0$.
 Furthermore, a version of A2 for this case requires $F$ to be continuous in $v$ and $w$,
 and $F(\cdot,t,v,w)\in L_\infty(\Omega)$ $\forall\, t>0$ and $\forall\,v,w\in\R$.
 Under these assumptions, Theorem~\ref{theo_semidiscr} remains valid with minimal changes in the proofs.
\end{remark}

\subsection{Maximum norm error for finite difference discretizations}\label{ssec_FD}

In  the spatial domain $\Omega=(0,1)^d\subset\R^d$,
%To discretize it in space,
let
 $\bar\Omega_h$ be the tensor product of $d$ uniform meshes $\{ih\}_{i=0}^N$,
 with $\Omega_h:=\bar\Omega_h\backslash\pt\Omega$ denoting the set of interior mesh nodes.
Now, consider a finite difference discretization in space of \eqref{semi_semidiscr_method}
in the form
\beq\label{FD_method}
\delta_t^\alpha U^m(z) +\LL_h U^m(z)+ F(U^m(z),U^{m-1}(z))=0\quad\mbox{for}\;\;z\in\Omega_h%,\quad U^m=0\;\;\mbox{on}\;\pt\Omega\quad\forall\,m=1,\ldots,M;\quad U^0=u_0.
\eeq
subject to $U^m=0$ in $\bar\Omega_h\cap\pt\Omega$ $\forall\,m\ge 1$ and $U^0=u_0$ in $\bar\Omega_h$.
 Here the discrete spatial operator $\LL_h$
is a standard finite difference operator defined,
using the standard orthonormal basis $\{\mathbf{i}_k\}_{k=1}^d$ in $\R^d$
(such that $z=(z_1,\ldots,z_d)=\sum_{k=1}^d z_k\, \mathbf{i}_k$ for any $z\in\R^d$), by
\begin{align*}
&\LL_hV(z):=\\[-0.1cm]
%\pt_{x_k}(a_k(x)\,\pt_{x_k}u)(z)\;\;\mapsto\;\;
&\sum_{k=1}^d h^{-2}\Bigl\{a_k(z+{\textstyle \frac12}h\mathbf{i}_k)\,\bigl[V(z)-V(z+h\mathbf{i}_k)\bigr]+a_k(z-{\textstyle \frac12}h\mathbf{i}_k)\,\bigl[V(z)-V(z-h\mathbf{i}_k)\bigr]\Bigr\}\\[-0.4cm]
&\qquad\quad{}+\sum_{k=1}^d{\textstyle \frac12}h^{-1}\, b_k(z)\,\bigl[V(z+h\mathbf{i}_k)-V(z-h\mathbf{i}_k)\bigr] +c(z)\,V(z)
\quad\qquad\mbox{for}\;\;z\in\Omega_h.
\end{align*}
(Here the terms in the first and second sums respectively discretize $-\pt_{x_k}\!(a_k\,\pt_{x_k}\!u)$ and $b_k\, \pt_{x_k}\!u$ from \eqref{LL_def}.)

Assuming that $h$ is sufficiently small, to be more precise,
\beq\label{d_max_pr}
h^{-1}\ge \max_{k=1,\ldots,d}\bigl\{{\textstyle\frac12}\|b_k\|_{L_\infty(\Omega)}\,\|a_k^{-1}\|_{L_\infty(\Omega)}\bigr\},
\eeq
and that $c\ge 0$ in $\Omega\times(0,T]$,
one can easily check that  the spatial discrete operator $\LL_h$ is associated with an M-matrix,
while $\LL_h $ satisfies the discrete maximum principle.
Then, \cite[Lemma~2.5(i)]{kopteva_semilin} yields the existence of a unique solution $\{U^j\}_{j=0}^m$ for the fully discrete problem~\eqref{FD_method}.

%The error of this method will be bounded in the nodal maximum norm, denoted
%$\|\cdot\|_{L_\infty(\Omega_h)}:=\max_{\Omega_h}|\cdot|$.

Furthermore, under the conditions of Theorem~\ref{theo_semidiscr}, additionally assuming \eqref{d_max_pr}
and
$\|\pt_{x_k}^l u(\cdot,t)\|_{L_\infty(\Omega)}\lesssim 1$ for $l=3,4$, $k=1,\ldots,d$, $t\in(0,T]$,
one gets a version of~\eqref{L1_semi_error}
for the nodal maximum norm of the error
$$
%\|u(\cdot,t_m)-U^m\|_{L_\infty(\Omega_h)}
\max_{\Omega_h}|u(\cdot,t_m)-U^m|
\lesssim {\mathcal E}^m+t_m^{\alpha}\, h^2
$$
if $q=2$, while
${\mathcal E}^m$ in the above bound is replaced by $\widetilde{\mathcal E}^m$
if $q=1$.

The proof of the above error bounds, to a large degree, imitates the proof of Theorem~\ref{theo_semidiscr} for $p=\infty$.
Additionally, one needs to take into account the truncation error $O(h^2)$ induced by the finite difference discretization $\LL_h$ of $\LL$,
which is estimated using the same stability result~\eqref{main_stab_new} only with $\gamma=-1$.

\subsection{Finite element discretizations}

Let $\LL$ in \eqref{problem} be
$\LL:=-\triangle u=-\sum_{k=1}^d\pt^2_{x_k}$
(i.e. $a_k=1$, $b_k=0$ for $k=1,\ldots,d$ and  $c=0$ in \eqref{LL_def}).
A full discretization of \eqref{problem}, posed in a general bounded  Lipschitz domain  $\Omega\subset\R^d$,
can be defined
 by applying
 a standard finite element spatial approximation to the temporal semidiscretization~\eqref{semi_semidiscr_method} as follows.
 Let $S_h \subset H_0^1(\Omega)\cap C(\bar\Omega)$ be a Lagrange finite element space of fixed degree $\ell\ge 1$ % \Bbbk
 relative to a
 quasiuniform simplicial triangulation
 %{\color{red}shape regular} mesh
 $\mathcal T$ of $\Omega$
 (assuming, to simplify the presentation, that the triangulation covers $\Omega$ exactly).
 Now,
 let $u_h^0= u_0$, and
 for $m=1,\ldots,M$, let $u^m_h \in S_h$ satisfy
 \beq\label{FE_problem}
\begin{array}{l}
\langle \delta_t^\alpha u_h^m,v_h\rangle +\langle\nabla u_h^m,\nabla v_h\rangle+ \langle F(u_h^m, u_h^{m-1}),v_h\rangle=0\qquad\forall v_h\in S_h\,.%\quad\mbox{for}\;\;j=1,\ldots,M,
%\\[0.2cm]
%U^j=0\quad\mbox{in}\;\;\bar\Omega_h\cap\pt\Omega,\;\;j=1,\ldots,M,\qquad
%U^0=u_0\quad\mbox{in}\;\;\bar\Omega_h.
\end{array}
\eeq
%with . %{\color{red}or some $u_h^0\approx u_0$}.
%
%Here $\langle \cdot, \cdot\rangle$
%and $\langle \cdot, \cdot\rangle_h$ respectively denote the exact $L_2(\Omega)$ inner product
%and, possibly, its quadrature  approximation.

To extend Theorem~\ref{theo_semidiscr}
to  \eqref{FE_problem}, we  proceed along the lines of \cite[\S7]{kopteva_semilin} (see also \cite{NK_MC_L1})
and employ the standard Ritz projection $\RR_h u(t)\in S_h$ of $u(\cdot,t)$
%associated with our discretization of the operator $\mathring{\LL}$ and
defined by
%\beq\label{Ritz_def}
$\langle \nabla\RR_h u,\nabla v\rangle=\langle -\triangle u,v_h\rangle$ %\qquad
$\forall v_h\in S_h$,
%\quad
$t\in[0,T]$.
A stronger version of the one-sided Lipschitz condition  A2 on $F(\cdot,\cdot)$ will be assumed %:
%
%\begin{itemize}
%\item[{\bf A2${}^*$\!\!\!}] (Lipschitz condition)
%For
 with some constants $\lambda_0\ge0$ and $\lambda_1\ge 0$:
\beq\label{F_ass_A2star}
%\color{pass}
|F(v+\nu, w+\omega)-F(v, w)|\le \lambda_0 |\nu|+\lambda_1|\omega| \quad \forall\,v,\,w\in\RR_u\subseteq \R,\;\; \forall\,\nu,\,\omega\in\R.
%\vspace{-0.4cm}
\eeq
%\end{itemize}

Under the conditions of Theorem~\ref{theo_semidiscr} (with $p=2$ in \eqref{ass_u_pde}) and condition
%A2${}^*$
\eqref{F_ass_A2star}
(instead of A2) on $F(\cdot,\cdot)$,
there exists a unique solution $\{u_h^j\}_{j=0}^M$ of \eqref{FE_problem}, and, for $m=1,\ldots,M$,
\beq%\begin{align}%
\|u(\cdot,t_m)-u_h^m\|_{L_2(\Omega)}\lesssim
%{\color{red}\|u_0-u_h^0\|_{L_\infty(\Omega)}} +
{\mathcal E}^m
 %\notag\\
 \label{FE_error_bound_L2}
%&{}
+\max_{t\in[0,t_m]}\|\rho(\cdot, t)\|_{L_2(\Omega)}+\int_0^{t_m}\!\|\pt_t \rho(\cdot, t)\|_{L_2(\Omega)}\,dt,
%\qquad \mbox{for}\;\;p=2,\infty,
\eeq%\end{align}
where
%${\mathcal E}^m$ is defined in \eqref{E_cal_m}, and
 $\rho(\cdot, t):=\RR_h u(t)-u(\cdot, t)$
 is the error of the Ritz projection,
and
${\mathcal E}^m$ is from \eqref{E_cal_m} assuming $q=2$ in A1, while if  $q=1$, the term ${\mathcal E}^m$ is replaced by
$\widetilde{\mathcal E}^m$.

Indeed, the existence of a unique solution $u_h^m$ for each $m\ge1$ can be proved exactly as in the proof of \cite[Theorem~7.4]{kopteva_semilin}.
Next,
let $e_h^m:=u_h^m-\RR_h u(t_m)\in S_h$ and $\rho^m:=\rho(\cdot, t_m)$, so
 $u_h^m=u^m+e_h^m+\rho^m$.
Now, a standard calculation using \eqref{FE_problem}  and \eqref{problem} yields  a version of \eqref{err_eq_semi}:
$$%\beq\label{FE_err_prob_L2}
\langle \delta_t^\alpha e_h^m, v_h\rangle +\langle\nabla e_h^m,\nabla v_h\rangle
+\langle F(u_h^m, u_h^{m-1})-F(u^m,u^{m-1}), v_h\rangle=-\langle \delta_t^\alpha \rho^m+r^m+r^m_f, v_h\rangle
$$%\eeq
$\forall\, v_h\in S_h$.
Setting $v_h:=e_h^m$ and noting that, by \eqref{F_ass_A2star},
%A2${}^*$,
$|F(u_h^m, u_h^{m-1})-F(u^m,u^{m-1})|$ is bounded by $\lambda_0|e_h^m+\rho^m|+\lambda_1|e_h^{m-1}+\rho^{m-1}|$,
one gets a version of \eqref{error_ineq_semi} (in which $\|\cdot\|=\|\cdot\|_{L_2(\Omega)}$):
$$
(\delta_t^\alpha -\lambda_0)\, \|e^m\|-\lambda_1\|e^{m-1}\|\le \lambda_0 \|\rho^m\|+\lambda_1\|\rho^{m-1}\|+\|\delta_t^\alpha \rho^m+r^m+r^m_f\|
$$
for $m\ge 1$, subject to $e_h^0=-\rho^0$.
The desired bound \eqref{FE_error_bound_L2} is now obtained along the lines of the proof of \cite[Theorem~7.4]{kopteva_semilin}.

\section{Quasilinear time-fractional subdiffusion equations}\label{sec_quasi}

Since the coefficients in \eqref{problem} are functions of $x$ and $t$,
our error analysis framework naturally extends to the quasilinear case, as we  now demonstrate.

Consider a quasilinear version of~\eqref{problem},\eqref{CaputoEquiv}:
\beq\label{problem_q}
\begin{array}{l}
\pt_t^{\alpha}u+\QQ u+f(x,t,u)=0\quad\mbox{for}\;\;(x,t)\in\Omega\times(0,T],\\[0.2cm]
u(x,t)=0\quad\mbox{for}\;\;(x,t)\in\pt\Omega\times(0,T],\qquad
u(x,0)=u_0(x)\quad\mbox{for}\;\;x\in\Omega.
\end{array}
\eeq
%{\color{red}$f(u)$ replaced by $f(x,t,u)$??}
Here the quasilinear spatial operator $\QQ$ is defined by
\beq\label{LL_quasi}
\QQ u := -\sum_{k=1}^d \pt_{x_k}\!\Bigl\{a_k(x,t,u)\,\pt_{x_k}\!u + b_k(x,t,u) \Bigr\},%+c(x,t)\, u,
\eeq
%(compared to \eqref{LL_def}, the term $c(x,t)\, u$  is absorbed by $f(x,t,u)$)
under the following conditions
\beq\label{C_a}
\begin{array}{rl}
0<c_a\le a_k(x,t,v)&\le \bar c_a,
\quad
\\[0.2cm]
|a_k(x,t,v)-a_k(x,t,w)|&\le C_a|v-w|,
\quad
\\[0.2cm]
|b_k(x,t,v)-b_k(x,t,w)|&\le C_b|v-w|,
\quad\forall x\in\Omega,\;\;t>0 ,\;\;  v,\,w\in\R,
\end{array}
\eeq
with some constants $\bar c_a\ge c_a>0$, $C_a\ge 0$, and $C_b\ge0$.

Let $f$ in \eqref{problem_q} be continuous in $s$ and  satisfy $f(\cdot,t,s)\in L_\infty(\Omega)$ for all $t>0$ and $s\in\R$,
and the one-sided Lipschitz condition
\beq\label{ass_f}
f(x,t,v)-f(x,t,w)\ge -\lambda_0[v-w]\qquad \forall v\ge w,\;\; x\in\Omega,\;\;t>0
\eeq
with some constant {\color{blue}{$\lambda_0\ge0$}}.
If $f=f(u)$, the above \eqref{ass_f} becomes equivalent to A1 and A2 for $F(v,w):=f(v)$ with constants $L=0$ and $\lambda_1=0$.
Note also that \eqref{ass_f} was assumed in \cite{kopteva_semilin} when considering semilinear subdiffusion equations.

To simplify the presentation, we shall restrict our consideration to the fully implicit semidiscretization
of \eqref{problem_q} in time of L1 type (compare with \eqref{semi_semidiscr_method}):
\beq\label{semi_semidiscr_method_quasi}%\color{pass}
\delta_t^\alpha U^m +\QQ^m U^m+ f(\cdot,t_m,U^m)=0\quad\mbox{in}\;\Omega,%\qquad U^m=0\;\;\mbox{on}\;\pt\Omega,%\quad\forall\,m=1,\ldots,M;\quad U^0=u_0.
\eeq
$\forall\,m\ge 1$, subject to  $U^m=0$\ on $\pt\Omega$ and $U^0=u_0$,
where $\QQ^m=\QQ\big|_{t=t_m}$.
For each $m$, equation~\eqref{semi_semidiscr_method_quasi} is a quasilinear elliptic equation.
Existence (and uniqueness) of solutions of such equations is typically established by the application of topological fixed point theorems
in appropriate function spaces
%[+Lad]
\cite{GTru,evans}, as we illustrate in \S\ref{ssec_existence}.

\begin{theorem}[error bound]\label{theo_semidiscr_quasi}
Suppose that problem \eqref{problem_q},\,\eqref{LL_quasi},
under conditions \eqref{C_a},\,\eqref{ass_f}, has a unique solution
%with the initial condition $u_0\in L_\infty(\Omega)$,
$u$, which satisfies
\eqref{ass_u_pde}
for $p=2$ and some constant $\sigma \in (0,1) \cup (1,2)$,
and  $\|\nabla u(\cdot,t)\|_{L_\infty(\Omega)}\le C_u$ $\forall\, t\in[0,T]$  for some constant $C_u$.
%with and
%$\|\partial_t^l u (\cdot, t)\|_{L_2(\Omega)}\lesssim 1+t^{\sigma-l}$ for $l = 0,1,2$ and $t\in(0,T]$,
%
Let
%the numerical scheme \eqref{semi_semidiscr_method_quasi} for \eqref{problem_q}
%satisfy
%conditions A1  and  A2 on $F(\cdot,\cdot)$, with some constants $\lambda_0\ge 0$.
%Also, let
the temporal grid satisfy~\eqref{t_grid_gen} and
 $\lambda\tau_j^{\alpha}< \{\Gamma(2-\alpha)\}^{-1}$ $\forall\,j\ge1$,
 where $\lambda:=\lambda_0 +{(C_aC_u+C_b\sqrt{d})^2}/(4c_a)$.
Then a solution  $U^m\in H^1_0(\Omega)$ of \eqref{semi_semidiscr_method_quasi},
 %with $U^m\in H^1_0(\Omega)\cap L_\infty(\Omega)$ %$\forall m\ge 1$
satisfies
\beq\label{L1_q_error}
\|u(\cdot,t_m)-U^m\|_{L_2(\Omega)}\lesssim {\mathcal E}^m
\qquad\forall\,m=1,\ldots,M, %\qquad p\in\{2,\infty\},
\eeq
where ${\mathcal E}^m$ is from \eqref{E_cal_m}.
%
%\color{blue}
%(ii) If $a_k=a(x,t)$ (i.e. are independent of $u$) or $a_k=a(t,u)$ (i.e. are independent of $k$ and $x$),
%then the above result remains true under the weaker assumption that $u(\cdot,t)\in H_0^1(\Omega)$ for $t\in(0,T]$
%and $C_u:=0$ used in the definition of $\lambda$.
\end{theorem}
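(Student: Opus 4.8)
The plan is to mirror the argument of Theorem~\ref{the_er_simplest} and, more directly, its $L_2(\Omega)$ version used in Theorem~\ref{theo_semidiscr} with $p=2$, reducing the whole analysis to a single scalar inequality of the form $(\delta_t^\alpha-\lambda)\,\|e^m\|\le\|r^m\|$ for the error $e^m:=U^m-u^m$ (with $u^m:=u(\cdot,t_m)$), to which the stability result Theorem~\ref{theo_main_stab_semi} applies. Existence of a solution $U^m\in H^1_0(\Omega)$ is taken from \S\ref{ssec_existence}. First I would subtract the scheme~\eqref{semi_semidiscr_method_quasi} from the exact equation evaluated at $t=t_m$; since the reaction term is fully implicit (so no semilinear consistency error $r^m_f$ arises), this yields $\delta_t^\alpha e^m+(\QQ^m U^m-\QQ^m u^m)+\bigl(f(\cdot,t_m,U^m)-f(\cdot,t_m,u^m)\bigr)=-r^m$, where $r^m$ is the time truncation error of Lemma~\ref{lem_r}. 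Taking the $L_2(\Omega)$ inner product with $e^m\in H^1_0(\Omega)$ and using $\langle\delta_t^\alpha e^m,e^m\rangle\ge(\delta_t^\alpha\|e^m\|)\,\|e^m\|$ (exactly as in the proof of Theorem~\ref{theo_semidiscr}) reduces the task to bounding the two nonlinear contributions from below.

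The crux, and the one genuinely new ingredient compared with the semilinear case, is the quasilinear diffusion term. Integrating $\langle\QQ^m U^m-\QQ^m u^m,e^m\rangle$ by parts (legitimate since $e^m$ vanishes on $\pt\Omega$) and writing $\pt_{x_k}U^m=\pt_{x_k}u^m+\pt_{x_k}e^m$, I would split it into a coercive part $\sum_k\int_\Omega a_k(\cdot,t_m,U^m)\,(\pt_{x_k}e^m)^2\ge c_a\|\nabla e^m\|^2$ (using $a_k\ge c_a$ from~\eqref{C_a}) and a remainder built from the coefficient differences. The Lipschitz bounds $|a_k(\cdot,t_m,U^m)-a_k(\cdot,t_m,u^m)|\le C_a|e^m|$ and the analogue for $b_k$ from~\eqref{C_a}, together with the pointwise estimate $|\nabla u^m|\le C_u$, give, after a Cauchy--Schwarz step in the index $k$, a pointwise bound of the remainder integrand by $(C_aC_u+C_b\sqrt d)\,|e^m|\,|\nabla e^m|$. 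A Young inequality then absorbs the factor $\|\nabla e^m\|$ into the coercive part at the cost of exactly $\tfrac{(C_aC_u+C_b\sqrt d)^2}{4c_a}\|e^m\|^2$, yielding $\langle\QQ^m U^m-\QQ^m u^m,e^m\rangle\ge-\tfrac{(C_aC_u+C_b\sqrt d)^2}{4c_a}\|e^m\|^2$. This is precisely the source of the second summand in the definition of $\lambda$.

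For the reaction term, the one-sided Lipschitz condition~\eqref{ass_f} gives $\langle f(\cdot,t_m,U^m)-f(\cdot,t_m,u^m),e^m\rangle\ge-\lambda_0\|e^m\|^2$. Combining the three lower bounds and dividing by $\|e^m\|$ (the case $\|e^m\|=0$ being trivial) produces $(\delta_t^\alpha-\lambda)\,\|e^m\|\le\|r^m\|$ with $\lambda=\lambda_0+\tfrac{(C_aC_u+C_b\sqrt d)^2}{4c_a}$, which is exactly the constant appearing in the mesh hypothesis $\lambda\tau_j^\alpha<\{\Gamma(2-\alpha)\}^{-1}$ that underlies the comparison principle of Lemma~\ref{lem_comparison}. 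Finally, inserting the truncation bound $\|r^m\|_{L_2(\Omega)}\lesssim\tau^{\sigma-\alpha}(\tau/t_m)^{\gamma+1}$ (from Lemma~\ref{lem_r}, via~\eqref{ass_u_pde} with $p=2$) and applying Theorem~\ref{theo_main_stab_semi} with $\lambda_1=0$ and this $\lambda$ in the role of $\lambda_0$ gives $\|e^m\|\lesssim\tau^{\sigma-\alpha}\,\U^m(\tau;\gamma)\simeq{\mathcal E}^m$, verbatim as in the proof of Theorem~\ref{the_er_simplest}.

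I expect the coercivity-plus-Young estimate for $\QQ^m$ to be the main obstacle: it is where the nonlinear diffusion is linearised against the exact-solution gradient bound $C_u$, and where the precise form of $\lambda$ (hence the admissible step-size restriction) is determined. Once that lower bound is secured, the reaction term, the discrete fractional product inequality, and the appeal to Theorem~\ref{theo_main_stab_semi} all transfer directly from the linear and semilinear analyses, so the remainder of the argument is routine.
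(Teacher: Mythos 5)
Your proposal is correct and follows essentially the same route as the paper's own proof: the same error equation tested with $e^m$ in $L_2(\Omega)$, the same decomposition of $\langle\QQ^m U^m-\QQ^m u^m,e^m\rangle$ into a coercive part bounded below by $c_a\|\nabla e^m\|^2$ plus Lipschitz-controlled remainders of size $(C_aC_u+C_b\sqrt d)\,\|e^m\|\,\|\nabla e^m\|$, absorbed by Young's inequality to produce exactly $\lambda=\lambda_0+(C_aC_u+C_b\sqrt d)^2/(4c_a)$, followed by the appeal to Theorem~\ref{theo_main_stab_semi}. The paper likewise treats existence separately (in \S\ref{ssec_existence}, under extra assumptions on $a_k$), so deferring it there is consistent with the statement, which only bounds the error of a solution.
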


\begin{proof}
%{\color{magenta}(i)}
Throughout the proof, let $\|\cdot\|=\|\cdot\|_{L_2(\Omega)}$.
For the error
$e^m=U^m-u^m$
we again get a version of \eqref{err_eq_semi} $\forall\,m\ge 1$, in which $\LL e^m$ is now replaced by $\QQ^m U^m -\QQ^m u^m$
(and also $F(v,\cdot)$ is replaced by $f(\cdot,t_m, v)$, and $r^{m}_f=0$):
\beq\label{err_eq_quasi}
\delta_t^\alpha e^m +\bigl\{\QQ^m U^m-\QQ^m u^m\bigr\}
%+F(u^m+e^m, U^{m-1})-F(u^m,u^{m-1})
+f(\cdot,t_m,u^m+e^m)-f(\cdot,t_m,u^m)
=-r^m%-r^m_f
%\quad\;\; \forall\,m\ge 1.
\eeq
To be more precise, \eqref{err_eq_quasi} is obtained by subtracting $\delta_t^\alpha u^m+\QQ^m u^m%+F(u^m,u^{m-1})
+f(\cdot,t_m,u^m)=r^m
%+r^{m}_f
$
from \eqref{semi_semidiscr_method_quasi}.
Next, taking  the $L_2(\Omega)$ inner product
 of $e^m$
with~\eqref{err_eq_quasi},
one gets (similarly to deriving \eqref{error_ineq_semi} for $p=2$  in the proof of Theorem~\ref{theo_semidiscr})
%(in which )
\beq\label{er_aux_q}
(\delta_t^\alpha -\lambda_0)\, \|e^m\|
%-\lambda_1\|e^{m-1}\|
+\frac{\langle \QQ^m U^m-\QQ^m u^m, e^m\rangle}{\|e^m\|}
\le\|r^m\|\,.
%+\|r^m_f\|.
\eeq
Here, in view of \eqref{LL_quasi},
$$
\langle \QQ^m U^m-\QQ^m u^m, e^m\rangle
=\sum_{k=1}^d  \langle -\pt_{x_k} w^m_k \,, e^m\rangle
=\sum_{k=1}^d  \langle w^m_k \,, \pt_{x_k} e^m\rangle,
$$
where
$$
w^m_k=a_k(\cdot,t_m,U^m)\,\pt_{x_k}\!U^m- a_k(\cdot,t_m,u^m)\,\pt_{x_k}\!u^m
%+\bigl[a_k(\cdot,t,U)-a_k(\cdot,t,u)\bigr]\,\pt_{x_k}\!u^m
+b_k(\cdot,t_m,U^m)-b_k(\cdot,t_m,u^m).
$$
A calculation using \eqref{C_a} shows that
$$
w^m_k\ge a_k(\cdot,t_m,U^m)\,\pt_{x_k}\!e^m-C_a\,|e^m|\,|\pt_{x_k}\!u^m|-C_b|e^m|.
$$
Hence, with  $\|\nabla u^m\|_{L_\infty(\Omega)}\le C_u$, one gets
\begin{align*}
\langle \QQ^m U^m-\QQ^m u^m, e^m\rangle&=\sum_{k=1}^d  \langle w^m_k \,, \pt_{x_k} e^m\rangle
\\[2pt]
&\ge c_a\|\nabla e^m\|^2-C_aC_u \|\nabla e^m\|\,\|e^m\|- C_b\sqrt{d}\,\|\nabla e^m\|\,\|e^m\|
\\[2pt]
&\ge -\frac{(C_aC_u+C_b\sqrt{d})^2}{4c_a}\|e^m\|^2.
\end{align*}
%where we used \eqref{C_a} and $C_u=\max_{t\in[0,T]}\|\nabla u(\cdot,t)\|_{L_\infty(\Omega)}$.
Combining this with \eqref{er_aux_q}, one arrives at
$$
(\delta_t^\alpha -\lambda)\, \|e^m\|%-\lambda_1\|e^{m-1}\|
%+\langle \QQ U^m-\QQ u^m, e^m\rangle
\le \|r^m\|,%+\|r^m_f\|,
\quad\mbox{where}\;\;
\lambda=\lambda_0 +\frac{(C_aC_u+C_b\sqrt{d})^2}{4c_a}.
$$
Now, using
\eqref{ass_u_pde}, one gets a bound of type \eqref{r_bound_simple}
on $\|r^m\|$,
and then applies Theorem~\ref{theo_main_stab_semi} to get  \eqref{L1_q_error}
exactly as in the proof of Theorem~\ref{the_er_simplest}.
\hfill%$\square$
\end{proof}

\subsection{Existence and uniqueness for \eqref{semi_semidiscr_method_quasi} with $a_k=a(u)$}\label{ssec_existence}
Here we shall assume, similarly to \cite{Jin2024,Plociniczak2023}, that
 the diffusion coefficients $a_k:=a(u)$ $\forall\,k$, while still allowing a non-self-adjoint case of $\QQ$ in \eqref{LL_quasi}.
Note also that the following existence and uniqueness result seamlessly applies to the case $a_k=a(t,u)$.

\begin{lemma}\label{lem_exist_quasi}
Under conditions \eqref{C_a},\,\eqref{ass_f},
let
the temporal grid satisfy
 $\lambda\tau_j^{\alpha}< \{\Gamma(2-\alpha)\}^{-1}$ $\forall\,j\ge1$,
 where $\lambda$ is from Theorem~\ref{theo_semidiscr_quasi},
and suppose that $U^0\in L_2(\Omega)$. Then there exists a unique solution $U^m\in H^1_0(\Omega)$ of \eqref{semi_semidiscr_method_quasi} $\forall\,m\ge 1$.
\end{lemma}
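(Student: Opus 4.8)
The plan is to fix the time level $m\ge1$, regard $\{U^j\}_{j<m}$ as known, and read \eqref{semi_semidiscr_method_quasi} as a single quasilinear elliptic boundary value problem for $U:=U^m$. Using \eqref{delta_def_kappa} with $\kappa:=\kappa_{m,m}=\tau_m^{-\alpha}/\Gamma(2-\alpha)$, it becomes $\QQ^m U+\kappa U+f(\cdot,t_m,U)=S$ in $\Omega$, $U\in H^1_0(\Omega)$, with $S:=\sum_{j<m}\kappa_{m,j}U^j\in L_2(\Omega)$ (by induction, since $U^0\in L_2(\Omega)$ and the earlier $U^j\in H^1_0(\Omega)$). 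The mesh restriction $\lambda\tau_m^\alpha<\{\Gamma(2-\alpha)\}^{-1}$ is exactly $\kappa>\lambda\ge\lambda_0$, and this strictly positive margin is what drives the coercivity below. Since here $a_k=a(u)$ for every $k$, I would apply the Kirchhoff transform $\Phi(s):=\int_0^s a(r)\,dr$ and set $\phi:=\Phi(U)$; by \eqref{C_a} the map $\Phi$ is an increasing bi-Lipschitz bijection of $\R$ with $\Phi(0)=0$ and inverse $\psi:=\Phi^{-1}$ satisfying $\psi'\in[\bar c_a^{-1},c_a^{-1}]$, so $U\in H^1_0(\Omega)\Leftrightarrow\phi\in H^1_0(\Omega)$. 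The decisive gain is $a(U)\,\pt_{x_k}U=\pt_{x_k}\phi$, which linearises the principal part to $-\triangle\phi$; the transformed weak problem is to find $\phi\in H^1_0(\Omega)$ with $\langle\nabla\phi,\nabla v\rangle+\langle\kappa\psi(\phi)+f(\cdot,t_m,\psi(\phi)),v\rangle=\langle S,v\rangle-\sum_{k=1}^d\langle b_k(\cdot,t_m,\psi(\phi)),\pt_{x_k}v\rangle$ for all $v\in H^1_0(\Omega)$.

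For existence I would use the standard theory of monotone/pseudomonotone operators, realised (as in \cite{evans}) through a Galerkin discretisation and Brouwer's fixed-point theorem, or alternatively a Leray--Schauder fixed-point argument in the spirit of \cite{GTru}. The transformed left-hand operator is bounded, demicontinuous (by the Nemytskii continuity of $\psi$, $f$ and the $b_k$), and coercive on $H^1_0(\Omega)$: testing with $\phi$, the Dirichlet term yields $\|\nabla\phi\|_{L_2}^2$, the term $\kappa\langle\psi(\phi),\phi\rangle$ is nonnegative (as $\psi$ is increasing with $\psi(0)=0$), the contribution of $f$ is bounded below using the one-sided Lipschitz condition \eqref{ass_f}, and the first-order term --- for which $|b_k(\cdot,t_m,\psi(\phi))|\lesssim1+|\phi|$ by \eqref{C_a} and $\psi'\le c_a^{-1}$ --- is absorbed into the Dirichlet energy by Young's inequality. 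The strict margin $\kappa>\lambda$ guarantees that the surviving lower-order terms are dominated, so the operator is coercive and the a~priori bound needed for the fixed-point/Galerkin step follows; tracking the resulting constants is routine. This yields a solution $\phi\in H^1_0(\Omega)$, whence $U:=\psi(\phi)\in H^1_0(\Omega)$ (again because $\psi$ is Lipschitz with $\psi(0)=0$) solves \eqref{semi_semidiscr_method_quasi}.

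The main obstacle is uniqueness, and this is precisely where the restriction $a_k=a(u)$ is essential. In the original variables, subtracting the equations for two solutions $U,\widetilde U$ and testing with $U-\widetilde U$ produces the genuinely quasilinear cross term $\sum_k\langle(a(U)-a(\widetilde U))\,\pt_{x_k}\widetilde U,\,\pt_{x_k}(U-\widetilde U)\rangle$, which cannot be controlled with only $H^1_0$-regularity of $\widetilde U$ (one would need, e.g., $\nabla\widetilde U\in L_\infty$, unavailable for the discrete solution). The Kirchhoff transform removes this term outright, because the transformed principal part $-\triangle$ is monotone in the $\phi$-pairing: testing the difference of the transformed equations with $\zeta:=\phi-\widetilde\phi$ gives $\|\nabla\zeta\|_{L_2}^2+\langle g(\cdot,\psi(\phi))-g(\cdot,\psi(\widetilde\phi)),\zeta\rangle=-\sum_{k=1}^d\langle b_k(\cdot,t_m,\psi(\phi))-b_k(\cdot,t_m,\psi(\widetilde\phi)),\pt_{x_k}\zeta\rangle$, where $g(\cdot,s):=\kappa s+f(\cdot,t_m,s)$. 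Since $\psi$ is increasing and $g$ is increasing with modulus at least $\kappa-\lambda_0>0$ by \eqref{ass_f}, the zeroth-order pairing is bounded below by a positive multiple of $\|\zeta\|_{L_2}^2$, while the right-hand side is $\lesssim\|\zeta\|_{L_2}\|\nabla\zeta\|_{L_2}$ by the Lipschitz bound in \eqref{C_a} together with $\psi'\le c_a^{-1}$. Young's inequality and the margin afforded by the mesh condition $\kappa>\lambda$ then force $\zeta\equiv0$, hence $U=\widetilde U$. I expect this uniqueness step, and in particular the treatment of the quasilinear principal part, to be the crux; once the Kirchhoff linearisation is in place the remaining estimates are routine, and the unified threshold $\lambda$ borrowed from Theorem~\ref{theo_semidiscr_quasi} (whose $C_aC_u$ contribution is not even needed here) leaves ample room.
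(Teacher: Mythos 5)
Your proposal correctly identifies the paper's decisive idea---the Kirchhoff transform $A(w):=\int_0^w a(s)\,ds$, which linearises the principal part and removes the cross term that cannot be controlled with only $H^1_0$-regularity---but the execution then diverges from the paper's, and your version has a genuine quantitative gap. The paper freezes the convective coefficients at $v$, solves the resulting \emph{semilinear} problem \eqref{T_def_A}, and shows the map $v\mapsto w$ is a strict contraction in $L_2(\Omega)$ by testing the difference of two such equations with $A(w_1)-A(w_2)=\omega[w_1-w_2]$, $\omega\in[c_a,\bar c_a]$; crucially, all lower-order quantities are then measured in the \emph{untransformed} differences, giving the monotone contribution $(\kappa_{m,m}-\lambda_0)\,c_a\|w_1-w_2\|^2$ against the convective cost $\tfrac{C_b^2d}{4}\|v_1-v_2\|^2$, i.e.\ the contraction condition $(\kappa_{m,m}-\lambda_0)\,c_a>C_b^2d/4$, which is exactly what $\kappa_{m,m}>\lambda$ supplies (see \eqref{contraction}). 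You instead run the uniqueness (and likewise the coercivity in your existence step) entirely in the transformed difference $\zeta=\phi-\widetilde\phi$. The best pointwise lower bound for your zeroth-order pairing is then $[g(\psi(\phi))-g(\psi(\widetilde\phi))]\,\zeta\ge(\kappa-\lambda_0)\,\psi'(\xi)\,\zeta^2\ge(\kappa-\lambda_0)\,\bar c_a^{-1}\zeta^2$, while the right-hand side, via $|\psi(\phi)-\psi(\widetilde\phi)|\le c_a^{-1}|\zeta|$ and Young's inequality, costs $\tfrac{C_b^2d}{4c_a^2}\|\zeta\|^2$. To force $\zeta\equiv0$ you therefore need $\kappa-\lambda_0>\tfrac{C_b^2d}{4c_a}\cdot\tfrac{\bar c_a}{c_a}$, whereas the hypothesis only guarantees $\kappa-\lambda_0>\tfrac{(C_aC_u+C_b\sqrt d)^2}{4c_a}$. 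These differ by the factor $\bar c_a/c_a\ge1$: take, e.g., $a$ increasing slowly from $1$ to $10$ (so $\bar c_a/c_a=10$ while $C_a$, and hence the $C_aC_u$ contribution to $\lambda$, is as small as you like) and a mesh on which $\kappa_{m,m}$ only barely exceeds $\lambda$; then your inequality fails while the lemma's hypotheses hold. So the constants are not ``routine'' and there is no ``ample room''---matching the specific $\lambda$ of Theorem~\ref{theo_semidiscr_quasi} is the crux of the lemma. The repair is precisely the paper's pairing: bound the monotone term below by $(\kappa-\lambda_0)\,c_a\|\psi(\phi)-\psi(\widetilde\phi)\|^2$ and the right-hand side by $C_b\sqrt d\,\|\psi(\phi)-\psi(\widetilde\phi)\|\,\|\nabla\zeta\|$, i.e.\ measure the zeroth-order differences in $U-\widetilde U$ rather than in $\zeta$.

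A second, smaller gap lies in your existence argument. The Browder--Minty/Galerkin framework you invoke requires the operator to be well defined and bounded from $H^1_0(\Omega)$ into its dual, but the lemma assumes no growth restriction on $f(x,t,\cdot)$ (only continuity in $s$, $f(\cdot,t,s)\in L_\infty(\Omega)$ for each fixed $s$, and the one-sided Lipschitz bound \eqref{ass_f}); already $f(s)=s^7$ with $d=3$ makes $\phi\mapsto f(\cdot,t_m,\psi(\phi))$ fail to map $H^1_0(\Omega)$ into $H^{-1}(\Omega)$, and your coercivity estimate suffers from the same $\bar c_a$ versus $c_a$ mismatch as above. The paper sidesteps both issues at once: with $b_k$ frozen, \eqref{T_def_A} is a semilinear problem with an increasing zeroth-order nonlinearity, solvable by sub-/super-solutions after rewriting for $W=A(w)$, and Banach's fixed point theorem then yields existence and uniqueness of $U^m$ simultaneously, with the contraction constant matching $\lambda$ exactly.
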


\begin{proof}
\newcommand{\TTT}{\mathbb T}
Set $A(w):=\int_0^w a(s)\,ds$; then %$A'=a$ and
$\QQ w = -\sum_{k=1}^d \pt_{x_k}\!\bigl\{\pt_{x_k}\!A(w) + b_k(x,t,w) \bigr\}$.
Now, assuming the desired assertion is proved for $\{U^j\}_{j<m}$,
and also recalling \eqref{delta_def_kappa}, one sees that
 \eqref{semi_semidiscr_method_quasi} is a quasilinear elliptic equation for $U^m$, any solution of which is
 a fixed point of the nonlinear mapping $\TTT$ in $L_2(\Omega)$, defined by $w=\TTT v$, where
\beq\label{T_def_A}
-\sum_{k=1}^d \pt_{x_k}^2\!A(w) + [\kappa_{m,m}w+f(\cdot,t_m,w)]=\sum_{k=1}^d \pt_{x_k}\! b_k(x,t,v) +S,
\eeq
subject to $w=0$ on $\pt\Omega$,
with $S=\sum_{j<m}\kappa_{m,j}U^j\in L_2(\Omega)$.
Taking the difference of \eqref{T_def_A} for any $w_1=\TTT v_1$ and $w_2=\TTT v_2$, and then the inner product of the latter with $A(w_1)-A(w_2)=:\omega[w_1-w_2]$,
yields
$$
\|\nabla(\omega[w_1-w_2])\|^2+(\kappa_{m,m}-\lambda_0)c_a\|w_1-w_2\|^2\le C_b\sqrt{d}\,\|v_1-v_2\|\,\|\nabla(\omega[w_1-w_2])\|.
$$
Here we used
the standard linearization of $A(w_1)-A(w_2)$ yielding $\omega=\int_0^1 a(sw_1+[1-s]w_2)\,ds\in L_\infty(\Omega)$
(as its range is within $[c_a,\bar c_a]$) and
$\omega\ge c_a>0$.
%$\langle A(w_1)-A(w_2),w_1-w_2\rangle\ge c_a\|w_1-w_2\|^2$
Hence, a calculation shows that
\beq\label{contraction}
(\kappa_{m,m}-\lambda_0)\,c_a\|w_1-w_2\|^2\le\frac{C_b^2 d}{4}\|v_1-v_2\|^2.
\eeq
The definition of $\kappa_{m,m}$ in \eqref{delta_def_kappa},
combined with the restriction on $\tau_m$,
implies that $\kappa_{m,m}<\lambda$, while  $\lambda=\lambda_0 +{(C_aC_u+C_b\sqrt{d})^2}/(4c_a)$.
Thus $\TTT$ is a strict contraction, so, by Banach's fixed point theorem \cite[\S9.2.1]{evans}, one concludes that $\TTT$ indeed has a unique fixed point
in $L_2(\Omega)$, which, in view of \eqref{T_def_A}, is also in $H^1_0(\Omega)$.
%\hfill%$\square$

Importantly, the above argument relies on the semilinear equation \eqref{T_def_A} having a unique solution in $H_0^1(\Omega)$ for any $v, S\in L_2(\Omega)$.
The uniqueness is straightforward from \eqref{contraction} with $v_1=v_2$.
The existence
of $w\in H_0^1(\Omega)$
can be shown, for example,
applying the method of  sub- and super-solutions \cite[\S9.3]{evans}
to \eqref{T_def_A} rewritten for $W:=A(w)$, with the left-hand side
$-\sum_{k=1}^d \pt_{x_k}^2 W+\hat f(\cdot, A^{-1}(W))$,
where
$\hat f(\cdot,w):=\kappa_{m,m}w +f(\cdot,t_m,w)$ is increasing in $w$,
and
$A^{-1}:\R\rightarrow \R$ denotes the inverse of $A:\R\rightarrow \R$.
\end{proof}

%\color{pass}
\section{Numerical results}\label{sec_Num}
%{\color{ForestGreen}
In this section, out theoretical findings are illustrated by numerical experiments with
% schemes of type
%\eqref{semi_semidiscr_method}
%, \eqref{simplest_b}, and \eqref{semi_semidiscr_method_quasi}
%applied to
three test problems.
%We measure the error in the pointwise-in-time $L_{\infty}(\Omega)$ norm,
%understood as the maximal nodal norm at each time level, or
%in the global maximum norm, understood as the maximal nodal norm in time and space.

%computed as the maximum absolute difference between the exact solution and numerical solution %over all spatial nodes and temporal steps $t_m$. For the first two examples, the exact solution %is known, while the second is more complicated with an unknown exact solution, and therefore the two-mesh principle is applied.

\begin{figure}[h!]
\label{Fig1}
\begin{center}
\includegraphics[height=0.31\textwidth]{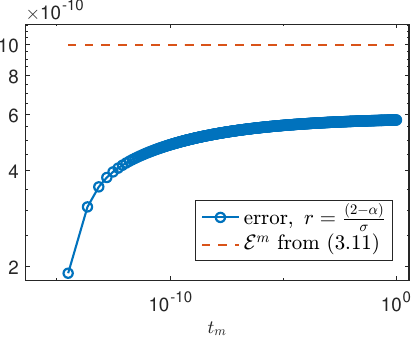}\hspace{0.05\textwidth}~~\includegraphics[height=0.30\textwidth]{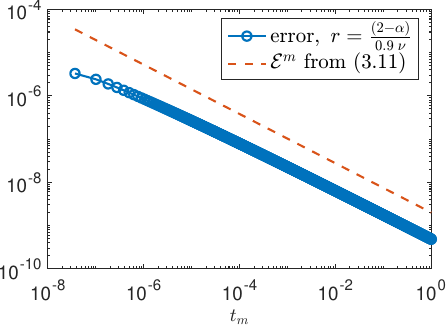}
\end{center}
\vspace{-0.3cm}
\caption{Test problem A: pointwise errors for \eqref{simplest_b} with $\alpha = 0.3$, $\sigma = 2\alpha$, $M = 2^{17}$, $r=\frac{2-\alpha}{\sigma}$
(left) and $r=\frac{2-\alpha}{0.9\nu}$ (right), \color{blue}{where $\nu:= 1+\sigma-\alpha$ }.}
 \end{figure}

\subsection*{Test problem A}
We start with a simplest test \eqref{simplest_a}
without spatial derivatives, with $f(u) = 0$, $g(t)= \frac{\Gamma(\sigma+1)}{\Gamma(\sigma-\alpha+1)}t^{\sigma-\alpha}$, $u_0 =0$, and $T=1$, which has an exact solution $u(t) = t^\sigma$. Fig.\,\ref{Fig1} displays the pointwise errors for scheme \eqref{simplest_b} (with $F=0$), demonstrating their agreement with the theoretical bounds of Theorem \ref{the_er_simplest}.

%\hspace{2.5cm}
\begin{figure}[h!]
\label{Fig2}
\begin{center}
\includegraphics[height=0.31\textwidth]{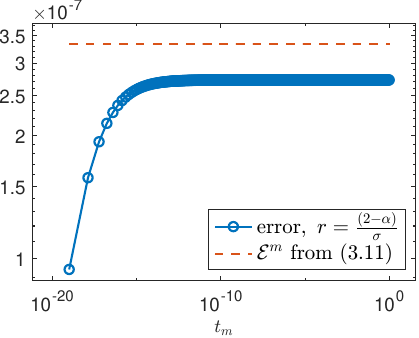}\hspace{0.05\textwidth}~~\includegraphics[height=0.30\textwidth]{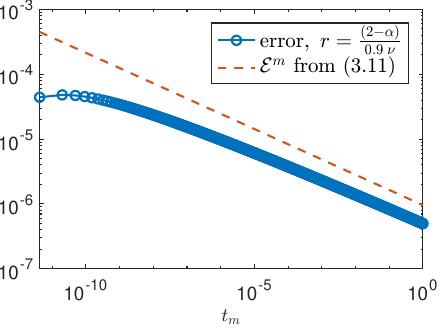}
\end{center}
\vspace{-0.3cm}
\caption{\color{blue}{Test problem A: pointwise errors for \eqref{simplest_b}
with $\alpha = 0.7$, $\sigma = \alpha/2$, $M = 2^{17}$, $r=\frac{2-\alpha}{\sigma}$
(left) and $r=\frac{2-\alpha}{0.9\nu}$ (right), where $\nu:= 1+\sigma-\alpha$.}}
 \end{figure}

\subsection*{Test problem B}
Next, consider the semilinear problem \eqref{problem} in $\Omega = (0,\pi)^2$
with {\color{blue}{$\mathcal{L} = -(\partial_{x_1}^2+\partial_{x_2}^2)$}}, $f(u) = u^3-u +g(x,t)$,  $u_0= 0$, and $T=1$. We choose $g$
such that the exact solution is given by {\color{blue}{$u(x,t) = t^\sigma\sin({x_1}^2/\pi)\sin({x_2}^2/\pi)$}}.
Fig.\,\ref{Example B} shows the maximum nodal errors for scheme \eqref{semi_semidiscr_method}
using the first-order IMEX
and the second-order Newton-iteration IMEX schemes
(with $F(U^m, U^{m-1})$, respectively, from Examples 3 and 4 in \S\ref{sec:intro}).
The standard finite difference discretization from \S\ref{ssec_FD} is used in space.
We consider $\alpha = 0.3$ and $\alpha = 0.5$,
$\sigma = \alpha/2$ and $r = \frac{2-\alpha}{\sigma}$.
For these values, Theorem \ref{theo_semidiscr} implies global error bounds
$\max_{\{t_m\}}\mathcal{E}^m\simeq M^{-(2-\alpha)}$
and $\max_{\{t_m\}}\widetilde{\mathcal{E}}^m\simeq M^{-1}$,
which are both in good agreement with the errors in Fig.\,\ref{Example B}.
%Note that we have also observed similarly good agreement
%in numerical experiments, not included here, with various other values of $\alpha$, $\sigma$, and $r$.

\begin{figure}[h!]
\label{Example B}
\begin{center}
\includegraphics[height=0.34\textwidth]{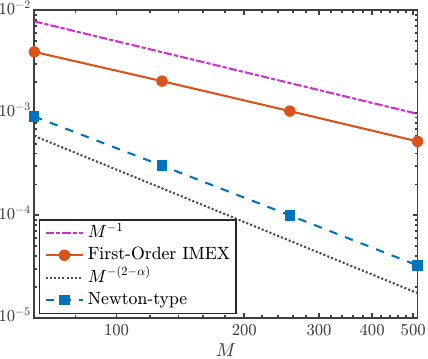}\hspace{0.05\textwidth}~~\includegraphics[height=0.34\textwidth]{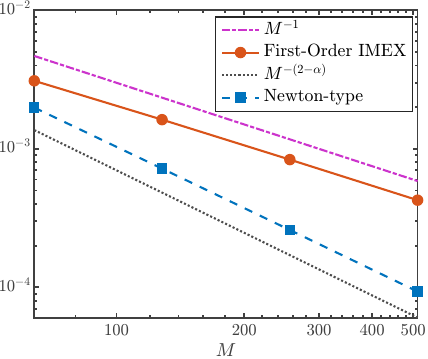}
\end{center}
\vspace{-0.3cm}
\caption{\color{blue}{Test problem B: maximum nodal errors for numerical scheme (\ref{semi_semidiscr_method} (first-order IMEX and second-order Newton-iteration IMEX versions) with $2^{10}$ degrees of freedom in space, $\sigma = \alpha/2$, $r = \frac{2-\alpha}{\sigma}$,
$\alpha = 0.3$ (left) and $\alpha = 0.5$ (right).}}
 \end{figure}

\subsection*{Test problem C}
Now, we consider the quasi-linear equation \eqref{problem_q} posed in $\Omega = (0,1)$,
with
$\mathcal{Q} u = -\partial_{x}\!\big\{(1 + u)\,\partial_{x}u\;\!\big\}$, $f(x,t,u) = u(1-u)$,
$u_0 = x(1-x)$, and $T=1$, known as a subdiffusive Fisher-Kolmogorov equation.
{\color{blue}{This equation was also considered in \cite{Plociniczak2023} under the assumption that
%$\color{red}\|\partial_t^{(m)}u\|_{L_2} \lesssim 1+t^{\alpha-m}$, which corresponds to
$\sigma = \alpha$ (while the errors listed \cite[Table~1]{Plociniczak2023} agree with Corollaries~\ref{cor_glob_con} and~\ref{cor_t1_con}
for the particular case $\sigma=\alpha$ and $r=1$).}
As we also want to numerically test the case $\sigma<\alpha$, we consider a modification of the above problem,
 {\bf Test problem C$\mathbf{}'$}, which is obtained by including an additional function $g(x,t):= \frac{\Gamma(\sigma+1)}{\Gamma(\sigma-\alpha+1)}t^{\sigma-\alpha}$
 (used in Test problem~A to get a more singular solution) in the right side of the corresponding \eqref{problem_q}. }

A standard second-order finite difference discretization (similar to the one considered in \S\ref{ssec_FD}) is used in space. {\color{blue}{Since the exact solution is unavailable for Test problems C and C${}'$,  the maximal nodal errors, computed using the double mesh approach,
are listed in Table \ref{tableC}
together with the corresponding computational convergence rates for a graded temporal mesh. The table confirms}} the expected convergence rate $2-\alpha$ (which follows from Theorem~\ref{theo_semidiscr_quasi} combined with Corollary \ref{cor_glob_con}).

\begin{table}
\label{tableC}
\caption{Test problems C (left) and the modification $C'$ (right) : Maximum nodal errors  (odd rows) and computational rates $q$ in $M^{-q}$ (even rows), $2^{13}$ spatial degrees of freedom, {\color{blue}{obtained using double-mesh comparison.}}}
\vspace{-0.2cm}
\tabcolsep=3pt
\centering

\begin{tabular}{l llll  llll}

\hline\rule{0pt}{11pt}
  & \multicolumn{4}{l}{Test~C:$\;\;r = {(2-\alpha)/}{\alpha}$}
  & \multicolumn{4}{l}{Test~C${}'$:$\;\;r= {(2-\alpha)/}{\sigma}$, $\;\sigma = \alpha/2$} \vspace{0.1cm}\\

\hline
\vspace{0.1cm}
  %$\alpha\;\;$
  & \strut\rule{0pt}{10pt}$M = 2^7$ & $M = 2^{8}$ & $M = 2^{9}$ & $M = 2^{10}$
  & \strut\rule{0pt}{9pt}$M = 2^7$ & $M = 2^{8}$ & $M = 2^{9}$ & $M = 2^{10}$ \\
\hline
\rule{0pt}{10pt}
$\alpha=0.3\;$
& 2.26e-4 & 7.63e-5 & 2.53e-5 & 8.30e-6
& 1.78e-4 & 5.95e-5 & 1.97e-5 & 6.46e-6 \\
& 1.565 & 1.592 & 1.609 &
& 1.577 & 1.595 & 1.611 & \\[3pt]

$\alpha=0.5$
& 4.86e-4 & 1.87e-4 & 6.92e-5 & 2.52e-5
& 4.35e-4 & 1.60e-4 & 5.79e-5 & 2.09e-5 \\
& 1.379 & 1.434 & 1.456 &
& 1.445 & 1.465 & 1.472 & \\[3pt]

$\alpha=0.7$
& 9.21e-4 & 4.17e-4 & 1.80e-4 & 7.61e-5
& 1.17e-3 & 5.03e-4 & 2.12e-4 & 8.76e-5 \\
& 1.144 & 1.213 & 1.241 &
& 1.213 & 1.251 & 1.272 & \\

\hline
\end{tabular}
\end{table}

\smallskip

While here we have presented limited numerical results for most representative cases,
we have observed similarly good agreement with the theoretical error bounds in
all our numerical experiments for wide ranges of $\alpha$, $\sigma$, and $r$,
both for pointwise-in-time  and global errors.

  \bibliographystyle{siamplain}
  \bibliography{lit}

\end{document}